 \def\change{}
\newtheorem{assumption}{\bf Assumption}
\newtheorem{theorem}{\bf Theorem}
\newtheorem{proposition}{\bf Proposition}
\newtheorem{lemma}{\bf Lemma}
\newtheorem{remark}{\bf Remark}
\title{Stability and performance analysis of NMPC: Detectable stage costs and general terminal costs}
\author{Johannes K\"ohler$^1$, Melanie N. Zeilinger$^1$, Lars Gr\"une$^2$
\thanks{$^1$Institute for Dynamic Systems and Control, ETH Zürich, Zürich CH-8092, Switzerland.}
\thanks{$^2$Mathematical Institute, University of Bayreuth, 95440 Bayreuth, Germany.}
\thanks{Lars Grüne was supported by DFG Research Grants GR 1569/13-2 and GR 1569/25-1.}
}
\begin{document}
\IEEEoverridecommandlockouts
\IEEEpubid{\begin{minipage}{\textwidth}\ \\[20pt] \\ \\
         \copyright 2022 IEEE.  Personal use of this material is  permitted.  Permission from IEEE must be obtained for all other uses, in  any current or future media, including reprinting/republishing this material for advertising or promotional purposes, creating new  collective works, for resale or redistribution to servers or lists, or  reuse of any copyrighted component of this work in other works.
     \end{minipage}}
\maketitle
\begin{abstract} 
We provide a stability and performance analysis for nonlinear model predictive control (NMPC) schemes \change{subject to input constraints}.  
Given an exponential stabilizability and detectability condition w.r.t. the employed state cost, we provide a sufficiently long prediction horizon to ensure asymptotic stability and a desired performance bound w.r.t. the infinite-horizon optimal controller. 
Compared to existing results, the provided analysis is applicable to positive semi-definite (detectable) cost functions, provides \textit{tight} bounds using a linear programming analysis, and allows for a seamless integration of general positive-definite terminal cost functions in the analysis. 
The practical applicability of the derived theoretical results are demonstrated \change{with numerical examples}. 
\end{abstract}
\section{Introduction}
\subsubsection*{Motivation} 
Nonlinear model predictive control (NMPC), also called receding horizon control, is an optimization-based control technique that generates a feedback law by repeatedly solving finite-horizon open-loop optimal control problems~\cite{grune2017nonlinear,rawlings2017model}. 
The popularity of this control method stems largely from the applicability to general nonlinear systems and the possibility to explicitly handle constraints. 

Standard design methods to ensure closed-loop stability and performance of NMPC schemes include the offline design of suitable terminal ingredients and thus a local control Lyapunov function (CLF), compare~\cite{rawlings2017model,chen1998quasi,mayne2000constrained,limon2006stability}. However, the inclusion of terminal ingredients restricts the feasible set and thus decreases the region of attraction. 
Furthermore, the design of a local CLF can often be non-trivial, e.g., if the linearization is not stabilizable~\cite{worthmann2015model}, online changing setpoints are considered~\cite{limon2018nonlinear}, or for large-scale distributed systems~\cite{giselsson2013feasibility,conte2016distributed}. 
Thus, the study of NMPC schemes without such terminal ingredients is of high practical relevance, compare~\cite{mayne2013apologia} and \cite[Sec.~7.4]{grune2017nonlinear} for general discussions regarding the drawbacks and advantages of terminal ingredients. 

For many control problems, the control goal is naturally specified in terms of output (and input) setpoints~\cite{limon2018nonlinear}, references/paths~\cite{faulwasser2012optimization}, or sets/zones~\cite{ferramosca2010mpc}, which are naturally addressed using positive semi-definite cost functions. Furthermore, we are currently seeing a renaissance of data-driven identification methods used in NMPC, e.g., using linear-parameter varying models~\cite{abbas2018improved} or non-parametric models~\cite{bongard2021robust,manzano2020robust}. 
Such systems are typically specified in terms of an input-output behaviour and correspondingly regulation is most naturally formulated using a positive semi-definite input-output cost. 
%
Hence, in this paper we are interested in deriving stability and performance results for NMPC, without requiring a local CLF as a terminal cost and while also allowing for a positive semi-definite stage cost.

\subsubsection*{Related work}
Stability results for NMPC without a local CLF can be found in~\cite{alamir1995stability,jadbabaie2005stability}, however, the resulting conditions on the prediction horizon are typically difficult to verify a-priori. 
Based on exponential \textit{cost controllability}, verifiable bounds on the prediction horizon ensuring stability have been obtained in~\cite{tuna2006shorter,grune2008infinite}, including a corresponding performance bound w.r.t the infinite horizon optimal cost. 
These bounds are further improved in~\cite{grune2009analysis,grune2010analysis,Wort11}, resulting in tight estimates based on a linear programming (LP) analysis, 
compare also the continuous-time results in~\cite{reble2012unconstrained}.
Furthermore, bounds utilizing some additional terminal weight/cost can be found in~\cite{tuna2006shorter,grune2010analysis,reble2012unifying,reble2012improved,magni2001stabilizing,kohler2021stability}. 
However, the mentioned stability results are in general not applicable to control problems involving positive semi-definite cost functions. 

Stability results for NMPC with positive semi-definite stage costs have been derived in~\cite{grimm2005model} by introducing a notion of \textit{cost-detectability}. 
The resulting bounds have been improved in~\cite[App. A]{Koehler2020Regulation} by using a stronger \textit{cost-observability} condition. 
However, the resulting conditions remain conservative, i.e., the resulting guarantees are only valid for unnecessarily large prediction horizons. 

In summary, despite its relevance for practical applications, the stability and performance analysis of NMPC schemes with positive semi-definite cost is not yet fully explored. 
Furthermore, the impact of general terminal costs on stability and performance in NMPC schemes has received little attention. 

We note that the stability and performance guarantees in the related works~\cite{tuna2006shorter,grune2008infinite,grune2009analysis,grune2010analysis,Wort11,reble2012unconstrained,reble2012unifying,grimm2005model,reble2012improved} are \textit{global} and the corresponding assumptions exclude state constraints or most unstable systems. 
Extensions to characterize a guaranteed region of attraction under less restrictive \textit{local} stabilizability conditions can be found in~\cite{limon2006stability,boccia2014stability,darup2015missing,kohler2021stability,Koehler2020Regulation,kohler2021dynamic} (cf. Remark~\ref{rk:constraints} for a detailed discussion), which is, however, not the focus of the paper.

\subsubsection*{Contribution} 
In this paper, we provide a stability and performance analysis of NMPC schemes with positive semi-definite cost functions and input constraints.  
To account for positive semi-definite stage costs, we consider the \textit{cost-detectability} and \textit{cost-controllability} conditions from~\cite{grimm2005model}, which generalize the \textit{cost-controllability} condition used in~\cite{grune2017nonlinear,worthmann2015model,tuna2006shorter,grune2008infinite,grune2009analysis,grune2010analysis,Wort11} for positive definite cost functions. 
Based on this characterization, we provide (tight) bounds on the closed-loop performance utilizing an LP analysis. 
These results provide quantitative performance bounds depending on the prediction horizon $N$, an exponential detectability constant, and a controllability constant, which also yields simple design guidelines. 

As an additional contribution, we extend this analysis to general terminal costs, which are characterized as an approximate CLF. 
The corresponding theoretical analysis reveals how a terminal cost can improve the stability guarantees or also adversely affect the closed-loop performance. 
In addition, we study simple terminal costs in terms of a scaled state measure (cf.~\cite{grune2010analysis}) or a finite-tail costs (cf.~\cite{magni2001stabilizing,kohler2021stability}), resulting in simple design guidelines. 
We recover the LP results in~\cite{grune2009analysis,grune2010analysis} as a special case when considering positive definite cost functions and a simple scaled terminal weighting. 

We also provide numerical examples, demonstrating the applicability of the derived theory to efficiently tune/design a stabilizing MPC.

\subsubsection*{Outline} 
Section~\ref{sec:problem} presents the setup and the considered NMPC formulation. 
Section~\ref{sec:theory} contains the theoretical analysis. 
Section~\ref{sec:terminal} extends the results to general terminal costs.
Sections~\ref{sec:design}--\ref{sec:term_design} discuss the applicability of the derived theory. 
Section~\ref{sec:num} demonstrates the theoretical results with numerical examples.
Section~\ref{sec:sum} concludes the paper.

\subsubsection*{Notation}
The quadratic norm with respect to a positive definite matrix $Q=Q^\top$ is denoted by $\|x\|_Q^2:=x^\top Q x$. 
The non-negative real numbers are denoted by $\mathbb{R}_{\geq 0}=\{r\in\mathbb{R}|~r\geq 0\}$. 
We denote the set of integers by $\mathbb{I}$ and the set of integers in an interval $[a,b]$ by $\mathbb{I}_{[a,b]}:=\{k\in\mathbb{I}|~a\leq k\leq b\}$. 
By $\mathcal{K}_\infty$ we denote the class of functions $\alpha:\mathbb{R}_{\geq 0}\rightarrow\mathbb{R}_{\geq 0}$, which are continuous, strictly increasing, unbounded, and satisfy $\alpha(0)=0$. 
The point-to-set-distance of a vector $x\in\mathbb{R}^n$ to a set $\mathcal{A}\subset\mathbb{R}^n$ is defined as $\|x\|_{\mathcal{A}}:=\inf_{s\in\mathcal{A}}\|x-s\|$. 
The empty product is defined as $1$, i.e., $\prod_{j=k_1}^{k_2}c_j=1$, for any $k_2<k_1$.

\section{Problem formulation}
\label{sec:problem}
We consider a nonlinear discrete-time system
\begin{align}
\label{eq:sys}
x(k+1)=f(x(k),u(k)),\quad x(0)=x_0,
\end{align}
with state $x(k)\in{X}=\mathbb{R}^n$, control input $u(k)\in\mathbb{U}\subseteq\mathbb{R}^m$, initial condition $x_0\in{X}$, and time step $k\in\mathbb{I}_{\geq 0}$. 
For a given initial state $x\in X$ and input sequence $u(\cdot)\in\mathbb{U}^N$, we denote the solution to~\eqref{eq:sys} after $k$ steps by $x_u(k,x)\in X$, $k\in\mathbb{I}_{[0,N]}$ with $x_u(0,x)=x$. 
We only consider input constraints $\mathbb{U}$, compare Remark~\ref{rk:constraints} for the consideration of state constraints. 

We consider a non-negative stage cost $\ell:X\times\mathbb{U}\rightarrow\mathbb{R}_{\geq 0}$ that should be minimized. 
The ideal performance is achieved by minimizing the infinite-horizon cost $\mathcal{J}_\infty(x_0,u):=\sum_{k=0}^\infty \ell(x_u(k,x),u(k))$ with the corresponding 
 optimal cost $V_{\infty}(x_0):=\inf_{u\in \mathbb{U}^\infty }\mathcal{J}_\infty(x_0,u)$. 
However, the corresponding infinite-horizon optimal solution is typically intractable to compute and hence we study MPC as a closed-loop approximator of the infinite horizon cost. 
In particular, we consider the finite-horizon cost $\mathcal{J}_N(x,u)=\sum_{k=0}^{N-1}\ell(x_u(k,x),u(k))$ with some prediction horizon $N\in\mathbb{I}_{\geq 1}$. 
The value function is given by $V_N(x)=\inf_{u\in\mathbb{U}^N}\mathcal{J}_N(x,u)$ and we denote a corresponding minimizer\footnote{%
We assume that a minimizer exists, which holds for $\mathcal{J}_N$ continuous and $\mathbb{U}$ compact, compare \cite[Prop.~2.4]{rawlings2017model}.} by $u_{N,x}^*$. 
The corresponding control law is given given by $u=\mu(x):=u_{N,x}^*(0)$, i.e., at each time step the first element of the optimal open-loop input sequence $u_{N,x}^*$ is applied to the system. 
The resulting closed-loop state and input is denoted by $x_\mu(k),u_\mu(k)$, $k\in\mathbb{I}_{\geq 0}$ and the infinite-horizon closed-loop performance is $\mathcal{J}_\infty^\mu(x_0):=\sum_{k=0}^\infty \ell(x_\mu(k),u_\mu(k))$. 

The main goal of this paper is to analyze the closed-loop properties of this MPC in terms of stability and performance relative to the infinite horizon optimal performance $V_{\infty}(x_0)$. 
\section{Performance and stability analysis with positive semi-definite stage cost} 
 \label{sec:theory}
In the following, we provide a closed-loop performance and stability analysis. 
In Section~\ref{sec:theory_1}, we provide a cost controllability and detectability condition and recap the stability and performance results from~\cite{grimm2005model,Koehler2020Regulation}. 
Then, in Section~\ref{sec:theory_2} we provide improved performance and stability conditions using an LP analysis. 
In Section~\ref{sec:theory_3}, we provide more intuitive bounds by characterizing the analytical solution to the LP. 
\subsection{Cost controllability and detectability}
 \label{sec:theory_1}
Denote $\ell_{\min}(x):=\inf_{u\in\mathbb{U}}\ell(x,u)$. 
We are particularly interested in the case that $\ell$ is positive semi-definite w.r.t. some set $\mathcal{A}\subseteq X$ that should be stabilized, i.e., $\ell_{\min}(x)\ge 0$ $\forall x \in X$ and $\ell_{\min}(x)=0$ $\forall x\in\mathcal{A}$. 
We follow the analysis in~\cite{grimm2005model} and introduce a \textit{state measure} $\sigma:X\rightarrow\mathbb{R}_{\geq 0}$, which is assumed to be positive definite w.r.t. a set $\mathcal{A}$. 
\begin{assumption}
\label{ass:set} (State measure)
There exist functions $\alpha_1,\alpha_2\in\mathcal{K}_\infty$, such that for all $x\in X$, we have
\begin{align}
\label{eq:set}
&\alpha_1(\|x\|_{\mathcal{A}}) \leq \sigma(x)\leq \alpha_2(\|x\|_{\mathcal{A}}).
\end{align}
\end{assumption}
Note that in case of positive definite costs functions, $\ell_{\min}$ is a natural state measure $\sigma$, compare~\cite[Ass.~3.2]{grune2012nmpc}. 

The following assumptions correspond to the detectability and stabilizability conditions in~\cite[SA~3--4]{grimm2005model}, albeit with simpler linear bounds (cf. \cite[Ass.~3--4]{Koehler2020Regulation}). 
\begin{assumption}
\label{ass:stab} (Exponential cost controllability)
There exist constants $\bar{\gamma}\geq 0$, $\gamma_k\in[0,\bar{\gamma}]$, $k\in\mathbb{I}_{\geq 1}$, such that 
\begin{align}
\label{eq:stab}
V_k(x)\leq \gamma_k\sigma(x),\quad \forall x\in X,\quad k\in\mathbb{I}_{\geq 1}.
\end{align}
\end{assumption}
\begin{assumption}
\label{ass:detect} (Cost detectability)
There exist a function $W:X\rightarrow\mathbb{R}_{\geq 0}$ and constants $\epsilon_{\mathrm{o}}>0$, $\underline{\gamma}_{\mathrm{o}},\overline{\gamma}_{\mathrm{o}}\geq 0$, such that for all $(x,u)\in X\times\mathbb{U}$, the following inequalities hold:
\begin{subequations}
\label{eq:detect}
\begin{align}
\label{eq:detect_1}
\underline{\gamma}_{\mathrm{o}}\sigma(x)\leq W(x)\leq&\overline{\gamma}_{\mathrm{o}}\sigma(x),\\
\label{eq:detect_2}
W(f(x,u))-W(x)\leq& -\epsilon_{\mathrm{o}}\sigma(x)+\ell(x,u).
\end{align}
\end{subequations}
\end{assumption} 
Assumptions~\ref{ass:set}-\ref{ass:stab} ensures that the infinite-horizon optimal cost admits an upper bound $V_\infty(x)\leq \bar{\gamma}\alpha_2(\|x\|_{\mathcal{A}})$.

Assumption~\ref{ass:detect} uses dissipation inequalities~\eqref{eq:detect} with the storage function $W(x)$ to characterize the connection between the stage cost $\ell$ and the state measure $\sigma$. 
By using a telescopic sum for condition~\eqref{eq:detect_2}, we see that a bounded closed-loop cost $\mathcal{J}_\infty^\mu$ implies that $\sigma$ converges to $0$ and thus by~\eqref{eq:set}, the system converges to the set $\mathcal{A}$, i.e., we can achieve the optimal performance only if we drive the system to the set $\mathcal{A}$. 
Similar dissipation characterizations are used in economic MPC with indefinite cost functions $\ell$ to study optimal system operation at some point/set $\mathcal{A}$, compare~\cite{hoger2019relation}.

The relation of Assumptions~\ref{ass:set}--\ref{ass:detect} to stabilizability and detectability properties of the system in case of input-output stage costs $\ell$ will be clarified in Section~\ref{sec:design}.
\begin{remark}
\label{rk:constraints}
\change{(State constraints, unstable systems, and region of attraction)
The global stabilizability condition in Assumption~\ref{ass:stab} tends to be too restrictive for hard state constraints or unstable systems. 
In related works (cf. \cite{boccia2014stability,darup2015missing,kohler2021stability,Koehler2020Regulation,kohler2021dynamic}), this issue is resolved by relaxing Inequality~\eqref{eq:stab} to only hold in some local neighbourhood, e.g., all $x\in X$ where $\sigma(x)\leq c$, $c>0$. 
Then, the resulting closed-loop guarantees are valid on a region of attraction, which is characterized as a a sublevel sets of the Lyapunov function and explicitly depends on the prediction horizon $N$. 
The main technical difficulty stems from establishing which part of the predicted sequence are guaranteed to lie in this local neighbourhood, where Inequality~\eqref{eq:stab} can be invoked.  
This can be done by applying simple bounds~\cite{boccia2014stability,darup2015missing} or using tight contradiction arguments~\cite{limon2006stability,kohler2021stability,Koehler2020Regulation}, which can also be merged with the LP analysis from~\cite{grune2009analysis,grune2010analysis}, compare ~\cite[Sec.~4.1]{kohler2021dynamic}. 
Extending these arguments to the more general setting in the present paper is an open issue.}  
\end{remark}
The following theorem summarizes the stability and performance results in~\cite[Thm.~1]{grimm2005model} and \cite[Thm.~1]{Koehler2020Regulation}. 
\begin{theorem}
\label{thm:grimm}
Let Assumptions~\ref{ass:set}--\ref{ass:detect} hold. 
Then, for any $x\in X$, the function $Y_N:=V_N+W$ satisfies
\begin{subequations}
\label{eq:Lyap}
\begin{align}
\label{eq:Lyap_1}
\epsilon_{\mathrm{o}}\sigma(x)\leq Y_N(x)\leq &(\gamma_N+\gamma_{\mathrm{o}})\sigma(x),\\
\label{eq:Lyap_2}
Y_N(f(x,\mu(x)))-Y_N(x)\leq& -\epsilon_{\mathrm{o}}\alpha_N \sigma(x),
\end{align}
\end{subequations}
with $\alpha_N:=1-\frac{\gamma_N(\gamma_N+\gamma_{\mathrm{o}})}{\epsilon_o^2(N-1)}$. 
If additionally, $N>\underline{N}:=1+\frac{(\overline{\gamma}+\gamma_{\mathrm{o}})\overline{\gamma}}{\epsilon_{\mathrm{o}}^2}$, then $\alpha_N\in(0,1]$. 
Furthermore, for any $x_0\in X$ the set $\mathcal{A}$ is asymptotically stable for the corresponding closed loop $x_\mu(\cdot)$ and the following performance bound holds
\begin{align}
\label{eq:performance}
\alpha_N(\mathcal{J}^\mu_\infty(x_0)+W(x_0))\leq Y_\infty(x_0):=V_\infty(x_0)+W(x_0).
\end{align}
\end{theorem}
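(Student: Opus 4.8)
The plan is to establish the three claims in order: first the sandwich bound~\eqref{eq:Lyap_1}, then the descent inequality~\eqref{eq:Lyap_2}, and finally derive asymptotic stability and the performance bound~\eqref{eq:performance} as consequences of a standard Lyapunov/telescoping argument. For the lower bound in~\eqref{eq:Lyap_1}, the key observation is that $W$ alone need not dominate $\sigma$ (if $\underline{\gamma}_{\mathrm{o}}$ is small), so one must exploit~\eqref{eq:detect_2}: summing $W(x_u(k+1))-W(x_u(k))\le -\epsilon_{\mathrm{o}}\sigma(x_u(k))+\ell(x_u(k),u(k))$ along any trajectory over $N$ steps gives $W(x_u(N,x))-W(x)\le -\epsilon_{\mathrm{o}}\sum_{k=0}^{N-1}\sigma(x_u(k,x))+\mathcal{J}_N(x,u)$, hence (dropping $W(x_u(N,x))\ge 0$ and $\sigma(x_u(k,x))\ge 0$ for $k\ge 1$) $\epsilon_{\mathrm{o}}\sigma(x)\le W(x)+\mathcal{J}_N(x,u)$; minimizing over $u$ yields $\epsilon_{\mathrm{o}}\sigma(x)\le W(x)+V_N(x)=Y_N(x)$. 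The upper bound is immediate from $V_N(x)\le\gamma_N\sigma(x)$ (Assumption~\ref{ass:stab}) and $W(x)\le\overline{\gamma}_{\mathrm{o}}\sigma(x)$ (Assumption~\ref{ass:detect}), noting $\gamma_{\mathrm{o}}$ should be read as $\overline{\gamma}_{\mathrm{o}}$.

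The descent inequality~\eqref{eq:Lyap_2} is the technical heart. The plan is to combine the standard dynamic-programming-style bound on $V_N(f(x,\mu(x)))-V_N(x)$ with the dissipation inequality for $W$. Writing $x^+:=f(x,\mu(x))$, the principle of optimality and the usual ``shift the tail, drop the first stage'' argument give $V_N(x^+)\le V_{N-1}(x^+)\le V_N(x)-\ell(x,\mu(x))$ plus an error term controlled by $V_N$ versus $V_{N-1}$; the sharper route of~\cite{grimm2005model,Koehler2020Regulation} is to bound $V_N(x^+)-V_{N-1}(x^+)$ and $V_{N-1}(x^+)$ using Assumption~\ref{ass:stab} applied along the optimal trajectory, producing a factor involving $\gamma_N(\gamma_N+\gamma_{\mathrm{o}})/(\epsilon_{\mathrm{o}}^2(N-1))$. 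Adding~\eqref{eq:detect_2} with $u=\mu(x)$, namely $W(x^+)-W(x)\le-\epsilon_{\mathrm{o}}\sigma(x)+\ell(x,\mu(x))$, cancels the $\ell(x,\mu(x))$ term and leaves $Y_N(x^+)-Y_N(x)\le-\epsilon_{\mathrm{o}}\sigma(x)+(\text{error})$, which is then collected into $-\epsilon_{\mathrm{o}}\alpha_N\sigma(x)$ with $\alpha_N=1-\frac{\gamma_N(\gamma_N+\gamma_{\mathrm{o}})}{\epsilon_{\mathrm{o}}^2(N-1)}$. The claim $\alpha_N\in(0,1]$ for $N>\underline{N}$ follows by monotonicity, bounding $\gamma_N\le\overline{\gamma}$ and $\gamma_N+\gamma_{\mathrm{o}}\le\overline{\gamma}+\overline{\gamma}_{\mathrm{o}}$, so that $\alpha_N\ge 1-\frac{(\overline{\gamma}+\overline{\gamma}_{\mathrm{o}})\overline{\gamma}}{\epsilon_{\mathrm{o}}^2(N-1)}>0$ precisely when $N-1>\frac{(\overline{\gamma}+\overline{\gamma}_{\mathrm{o}})\overline{\gamma}}{\epsilon_{\mathrm{o}}^2}$.

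Given~\eqref{eq:Lyap}, asymptotic stability of $\mathcal{A}$ is standard: $Y_N$ is positive definite and radially unbounded w.r.t.\ $\mathcal{A}$ via Assumption~\ref{ass:set} (composing with $\alpha_1,\alpha_2$), and~\eqref{eq:Lyap_2} shows it strictly decreases away from $\mathcal{A}$, so $Y_N$ is a Lyapunov function; combined with~\eqref{eq:Lyap_1} this gives $\|x_\mu(k)\|_{\mathcal{A}}\to 0$ and a $\mathcal{KL}$-estimate. For the performance bound, I telescope~\eqref{eq:Lyap_2} along the closed loop: $\sum_{k=0}^{T-1}\epsilon_{\mathrm{o}}\alpha_N\sigma(x_\mu(k))\le Y_N(x_0)-Y_N(x_\mu(T))\le Y_N(x_0)$. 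Then, using $\epsilon_{\mathrm{o}}\sigma(x_\mu(k))\ge$ a bound on the per-step closed-loop cost — more precisely, rearranging~\eqref{eq:detect_2} along the closed loop gives $\ell(x_\mu(k),u_\mu(k))\ge W(x_\mu(k+1))-W(x_\mu(k))+\epsilon_{\mathrm{o}}\sigma(x_\mu(k))$, and one wants instead an \emph{upper} bound on $\mathcal{J}_\infty^\mu$; the cleaner route is the classical relaxed-dynamic-programming identity: from $V_N(x_\mu(k))-V_N(x_\mu(k+1))\ge\alpha_N\ell_N$-type decrease one accumulates $\alpha_N\mathcal{J}_\infty^\mu(x_0)\le V_N(x_0)$, and then the $W$-augmented version gives $\alpha_N(\mathcal{J}_\infty^\mu(x_0)+W(x_0))\le Y_N(x_0)$, which in turn is bounded by $Y_\infty(x_0)=V_\infty(x_0)+W(x_0)$ since $V_N(x_0)\le V_\infty(x_0)$ (finite horizon infimum no larger than infinite horizon). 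The main obstacle I anticipate is getting the descent constant $\alpha_N$ exactly right — i.e., the careful propagation of Assumption~\ref{ass:stab} along the optimal trajectory to control $V_{N-1}(x^+)$ and the difference $V_N(x^+)-V_{N-1}(x^+)$ — since a naive estimate yields a weaker constant; matching the stated $\alpha_N$ requires the sharper two-step argument of~\cite{Koehler2020Regulation} rather than the one-line dynamic programming inequality.
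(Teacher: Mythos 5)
Your handling of \eqref{eq:Lyap_1}, of asymptotic stability, and of the final telescoping for \eqref{eq:performance} is sound and close to the paper's (which obtains Theorem~\ref{thm:grimm} as the special case $V_{\mathrm{f}}=0$, $\epsilon_{\mathrm{f}}\rightarrow\infty$ of Theorem~\ref{thm:grimm_terminal}); for \eqref{eq:performance} the precise step you leave implicit is to add $(1-\alpha_N)$ times \eqref{eq:detect_2} to the value-function decrease, giving $V_N(x^+)-V_N(x)+(1-\alpha_N)\bigl(W(x^+)-W(x)\bigr)\leq-\alpha_N\ell(x,\mu(x))$, which after summing and using $V_N\leq V_\infty$ yields exactly \eqref{eq:performance}. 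The genuine gap is in \eqref{eq:Lyap_2}: you correctly identify the derivation of $\alpha_N$ as the technical heart, but then outsource it (``the sharper two-step argument of \cite{Koehler2020Regulation}'') and the route you do sketch --- bounding $V_N(x^+)-V_{N-1}(x^+)$ and $V_{N-1}(x^+)$ via Assumption~\ref{ass:stab} --- would not go through. (As a side remark, $V_N(x^+)\leq V_{N-1}(x^+)$ is backwards for nonnegative stage costs without terminal cost; the principle of optimality gives $V_N(x)=\ell(x,\mu(x))+V_{N-1}(x^+)$, so everything hinges on the gap $V_N(x^+)-V_{N-1}(x^+)$, which your plan never actually controls.)

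Concretely, Assumption~\ref{ass:stab} bounds $V_{N-1}(x^+)\leq\gamma_{N-1}\sigma(x^+)$, i.e., in terms of $\sigma$ at the \emph{new} state, which cannot be played off against $-\epsilon_{\mathrm{o}}\sigma(x)$; and controlling $V_N(x^+)-V_{N-1}(x^+)$ would require knowing that $\sigma$ is small somewhere along the tail of the optimal trajectory, which --- precisely because $\ell$ is only positive semi-definite --- cannot be read off from the stage cost. The missing idea, which is what the paper's proof (Case~2 in Appendix~\ref{app:grimm_terminal}) supplies, is to telescope the dissipation inequality \eqref{eq:detect_2} along the \emph{open-loop optimal} trajectory $\bar x_k:=x_{u^*_{N,x}}(k,x)$, obtaining $\epsilon_{\mathrm{o}}\sum_{k=0}^{N-1}\sigma(\bar x_k)\leq Y_N(x)\leq(\gamma_N+\gamma_{\mathrm{o}})\sigma(x)$, and to conclude by averaging that some $k_x\in\mathbb{I}_{[1,N-1]}$ satisfies $\sigma(\bar x_{k_x})\leq\tfrac{\gamma_N+\gamma_{\mathrm{o}}}{\epsilon_{\mathrm{o}}(N-1)}\sigma(x)$. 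One then builds the candidate for $V_N(x^+)$ by following $u^*_{N,x}$ up to time $k_x$ and re-optimizing there with Assumption~\ref{ass:stab}: $V_N(x^+)+\ell(x,\mu(x))\leq\mathcal{J}_{k_x}(x,u^*_{N,x})+V_{N-k_x+1}(\bar x_{k_x})\leq V_N(x)+\gamma_{N-k_x+1}\,\sigma(\bar x_{k_x})$, which after bounding $\gamma_{N-k_x+1}$ produces exactly the factor $\gamma_N(\gamma_N+\gamma_{\mathrm{o}})/\bigl(\epsilon_{\mathrm{o}}(N-1)\bigr)$, i.e., $(1-\alpha_N)\epsilon_{\mathrm{o}}$; adding \eqref{eq:detect_2} at $(x,\mu(x))$ then gives \eqref{eq:Lyap_2}. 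This telescoping-to-find-$k_x$-then-reoptimize construction is the one step that makes the theorem work in the detectable (semi-definite) setting, and it is absent from your plan, so the gap is substantive rather than presentational.
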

\change{The proof is detailed in Appendix~\ref{sec:app_2}, based on Appendix~\ref{app:grimm_terminal}.}  
Theorem~\ref{thm:grimm} ensures asymptotic stability, given a lower bound $\underline{N}$ on the prediction horizon. 
Furthermore, condition~\eqref{eq:performance} provides a suboptimality estimate $\alpha_N\in[0,1)$ w.r.t. the optimal infinite-horizon performance that ensures that we recover infinite-horizon optimal performance as $N\rightarrow\infty$, i.e., $\lim_{N\rightarrow\infty}\mathcal{J}^\mu_\infty(x_0)=V_\infty(x_0)$. 
Hence, Theorem~\ref{thm:grimm} already provides a qualitative stability and performance analysis for MPC schemes with positive semi-definite cost. 

\subsection{Linear programming analysis}
 \label{sec:theory_2}
In the following, we improve the quantitative results in Theorem~\ref{thm:grimm} by providing less conservative conditions on the prediction horizon $N$ and the suboptimality index $\alpha_N$. For positive definite costs, i.e., costs with $\ell_{\min}(x)>0$ $\forall x\not\in\mathcal{A}$, such improvements were derived in \cite{tuna2006shorter,grune2008infinite,grune2009analysis,grune2010analysis} and the comparison in \cite{grune2012nmpc} shows that the achievable improvements are significant. Among these references, \cite{grune2009analysis,grune2010analysis} are based on a linear programming approach and provide the tightest estimates. The following theorem extends this approach to the more general setting considered in this paper. 
\begin{theorem}
\label{thm:main}
Let Assumptions~\ref{ass:stab}--\ref{ass:detect} hold. 
Then, for any $x,x_0\in X$, Inequalities~\eqref{eq:Lyap} and \eqref{eq:performance} hold with $\alpha_N$ according  to\footnote{%
Note that Equation~\eqref{eq:LP_1} uniquely defines $\alpha_N$ based on the minimum of the LP~\eqref{eq:LP} since $\epsilon_{\mathrm{o}}>0$.}
the following LP:
\begin{subequations}
\label{eq:LP}
\begin{align}
\label{eq:LP_1}
&\epsilon_{\mathrm{o}}(\alpha_N-1)\nonumber\\
=&\min_{\tilde{\ell},\tilde{W},\tilde{\sigma},\tilde{V}}\sum_{k=1}^{N-1}\tilde{\ell}_k-\tilde{V}\\
\label{eq:LP_2}
\mathrm{s.t. ~}&\tilde{\sigma}_0=1,\\
\label{eq:LP_3}
&\tilde{\ell}_k\geq 0,~k\in\mathbb{I}_{[0,N-1]},~\tilde{\sigma}_k\geq 0,~\tilde{W}_k\geq 0,~k\in\mathbb{I}_{[0,N]},\\
\label{eq:LP_4}
&\underline{\gamma}_{\mathrm{o}}\tilde{\sigma}_k\leq \tilde{W}_k\leq \overline{\gamma}_{\mathrm{o}}\tilde{\sigma}_k,~k\in\mathbb{I}_{[0,N]},\\
\label{eq:LP_5}
&\tilde{W}_{k+1}-\tilde{W}_k\leq -\epsilon_{\mathrm{o}}\tilde{\sigma}_k+\tilde{\ell}_k,~k\in\mathbb{I}_{[0,N-1]},\\
\label{eq:LP_6}
&\sum_{j=k}^{N-1}\tilde{\ell}_j\leq\gamma_{N-k}\tilde{\sigma}_k,~k\in\mathbb{I}_{[0,N-1]},\\
\label{eq:LP_7}
&\tilde{V}\leq \sum_{j=1}^{k-1}\tilde{\ell}_j+\gamma_{N-k+1}\tilde{\sigma}_k,~k\in\mathbb{I}_{[1,N]}.
\end{align}
\end{subequations}
\end{theorem}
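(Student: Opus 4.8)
The plan is to extend the relaxed-dynamic-programming / linear-programming argument of~\cite{grune2009analysis,grune2010analysis} so that the decrease is measured in the state measure $\sigma$ and the storage function $W$ of Assumption~\ref{ass:detect} is carried along. Fix $x\in X$; the case $\sigma(x)=0$ is degenerate (telescoping~\eqref{eq:detect_2} over the optimal horizon-$N$ trajectory forces all $\sigma_k$ and $W_k$ along it to vanish, so both sides of~\eqref{eq:Lyap} are zero), so assume $\sigma(x)>0$. Let $u^*=u_{N,x}^*$ be optimal for $V_N(x)$, write $x_k:=x_{u^*}(k,x)$, and define trajectory quantities $\ell_k:=\ell(x_k,u^*(k))$, $\sigma_k:=\sigma(x_k)$, $W_k:=W(x_k)$ and $\tilde V:=V_N(x_1)$ with $x_1=f(x,\mu(x))$. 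The core claim is that dividing all of these by $\sigma(x)$ produces a \emph{feasible point} of the LP~\eqref{eq:LP}.

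Checking feasibility is the bookkeeping heart of the proof. Constraints~\eqref{eq:LP_2}--\eqref{eq:LP_3} are immediate; \eqref{eq:LP_4} and~\eqref{eq:LP_5} are Assumption~\ref{ass:detect} evaluated along the trajectory (using $x_{k+1}=f(x_k,u^*(k))$); \eqref{eq:LP_6} follows because, by Bellman's principle, the tail $(u^*(k),\dots,u^*(N-1))$ is optimal for the horizon-$(N-k)$ problem at $x_k$, so $\sum_{j=k}^{N-1}\ell_j=V_{N-k}(x_k)\le\gamma_{N-k}\sigma_k$ by Assumption~\ref{ass:stab}; and~\eqref{eq:LP_7} follows by splicing: from $x_1$, running $u^*(1),\dots,u^*(k-1)$ for $k-1$ steps reaches $x_k$ at cost $\sum_{j=1}^{k-1}\ell_j$, and appending a horizon-$(N-k+1)$ optimal input there yields an admissible horizon-$N$ input for $V_N(x_1)$, giving $\tilde V\le\sum_{j=1}^{k-1}\ell_j+V_{N-k+1}(x_k)\le\sum_{j=1}^{k-1}\ell_j+\gamma_{N-k+1}\sigma_k$ via Assumption~\ref{ass:stab}. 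I expect the index bookkeeping in~\eqref{eq:LP_6}--\eqref{eq:LP_7} (making every invoked horizon $\ge1$) to be the main technical nuisance. I would also record here that $\alpha_N\le1$, since the objective value at this feasible point equals $(V_{N-1}(x_1)-V_N(x_1))/\sigma(x)\le0$, using $V_{N-1}(x_1)\le V_N(x_1)$ for non-negative stage costs.

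Next I would convert feasibility into the Lyapunov decrease. Evaluating the LP objective at the above point and using~\eqref{eq:LP_1} gives $\tilde V-\sum_{k=1}^{N-1}\ell_k\le\epsilon_{\mathrm{o}}(1-\alpha_N)\sigma(x)$. Combining this with the identity $Y_N(x_1)-Y_N(x)=\big(\tilde V-\sum_{k=1}^{N-1}\ell_k\big)+(W_1-W_0)-\ell(x,\mu(x))$ (which uses $V_N(x)=\sum_{k=0}^{N-1}\ell_k$ and $\ell_0=\ell(x,\mu(x))$) produces the intermediate bound $Y_N(f(x,\mu(x)))-Y_N(x)\le\epsilon_{\mathrm{o}}(1-\alpha_N)\sigma(x)+(W_1-W_0)-\ell(x,\mu(x))$, and then bounding $W_1-W_0\le-\epsilon_{\mathrm{o}}\sigma(x)+\ell(x,\mu(x))$ via~\eqref{eq:detect_2} yields~\eqref{eq:Lyap_2}. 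Inequality~\eqref{eq:Lyap_1} contains no $\alpha_N$ and is obtained exactly as in Theorem~\ref{thm:grimm}: the upper bound from Assumptions~\ref{ass:stab}--\ref{ass:detect}, the lower bound by telescoping~\eqref{eq:detect_2} over the optimal horizon-$N$ trajectory and discarding the non-negative terminal storage.

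For the performance bound~\eqref{eq:performance} the key is to \emph{not} sum~\eqref{eq:Lyap_2} but the sharper intermediate bound from the previous paragraph, evaluated along the closed loop: $Y_N(x_\mu(k+1))-Y_N(x_\mu(k))\le\epsilon_{\mathrm{o}}(1-\alpha_N)\sigma(x_\mu(k))+\big(W(x_\mu(k+1))-W(x_\mu(k))\big)-\ell(x_\mu(k),u_\mu(k))$. Summing it over $k\in\mathbb{I}_{[0,K-1]}$, telescoping, and using $Y_N=V_N+W$ and $V_N\ge0$ gives $\sum_{k=0}^{K-1}\ell(x_\mu(k),u_\mu(k))\le V_N(x_0)+\epsilon_{\mathrm{o}}(1-\alpha_N)\sum_{k=0}^{K-1}\sigma(x_\mu(k))$, while summing~\eqref{eq:Lyap_2} gives $\epsilon_{\mathrm{o}}\alpha_N\sum_{k=0}^{K-1}\sigma(x_\mu(k))\le Y_N(x_0)=V_N(x_0)+W(x_0)$. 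For $\alpha_N>0$ (if $\alpha_N\le0$ the bound is vacuous) one substitutes the second into the first, simplifies to $\alpha_N\sum_{k=0}^{K-1}\ell(x_\mu(k),u_\mu(k))\le V_N(x_0)+(1-\alpha_N)W(x_0)$, lets $K\to\infty$, uses $V_N(x_0)\le V_\infty(x_0)$ (truncation of a non-negative infinite-horizon cost), and rearranges to~\eqref{eq:performance}. A side point worth noting is well-posedness of the LP: the minimum is finite because $\tilde V$ is bounded via the $k=1$ instance of~\eqref{eq:LP_7} and $\sum_{k<N}\tilde\sigma_k$ is bounded by telescoping~\eqref{eq:LP_5} against~\eqref{eq:LP_4}.
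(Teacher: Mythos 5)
Your proposal is correct and follows essentially the same route as the paper: it shows that the (suitably normalized) optimal open-loop trajectory quantities are feasible for the LP~\eqref{eq:LP}, so the LP minimum bounds the one-step value-function decrease, which combined with the dissipation inequality~\eqref{eq:detect_2} yields~\eqref{eq:Lyap} and, after telescoping along the closed loop, \eqref{eq:performance} — exactly the argument of Appendix~\ref{app:terminal_LP} specialized to $V_{\mathrm{f}}=0$ as in Appendix~\ref{sec:app_2}. Your slightly different bookkeeping for the performance bound (telescoping the value decrease and the Lyapunov decrease separately before combining, using $\alpha_N\le 1$) is algebraically equivalent to the paper's per-step inequality $V_N(x_{\mu}(k+1))-V_N(x_{\mu}(k))+(1-\alpha_N)\bigl(W(x_{\mu}(k+1))-W(x_{\mu}(k))\bigr)\leq-\alpha_N\ell(x_\mu(k),u_\mu(k))$.
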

\change{The proof is detailed in Appendix~\ref{sec:app_2}, based on Appendix~\ref{app:terminal_LP}.}   
The LP~\eqref{eq:LP} explicitly computes a worst-case feasible sequence for the decrease of the value function $V_N$, i.e., 
a sequence with the smallest decrease in the value function $V_N$ given that Assumptions~\ref{ass:stab}--\ref{ass:detect} hold and the sequence is open-loop optimal. 
Hence, better estimates can only be derived if some additional assumptions are considered or if stability/performance is not ensured based on a one-step decrease of the value function $V_N$. 
Correspondingly, the derived bound improves the results in Theorem~\ref{thm:grimm} by providing less restrictive conditions in terms of the prediction horizon $N$ to guarantee asymptotic stability or a desired performance bound $\alpha_N$ in~\eqref{eq:performance}.

\subsection{Analytical solution}
 \label{sec:theory_3} 
In the following, we derive an analytical solution to the LP~\eqref{eq:LP} in order to allow for an easier interpretation of the results. 
\change{To be more specific, we compute a lower $\alpha_N\geq \hat{\alpha}_N$ with an analytical formula based on a simpler LP and then provide sufficient conditions to ensure $\alpha_N=\hat{\alpha}_N$.} 
First, we consider the important special case $\sigma=W$, i.e., the detectability characterization in Assumption~\ref{ass:detect} contains an \textit{exponential} decay rate $\eta:=1-\epsilon_{\mathrm{o}}\in(0,1)$, assuming $\epsilon_{\mathrm{o}}\in(0,1)$. 
\begin{theorem}
\label{thm:analytic_sigma_W}
Suppose $\underline{\gamma}_{\mathrm{o}}=\overline{\gamma}_{\mathrm{o}}=1$, and $\epsilon_{\mathrm{o}}\in(0,1)$. 
Then, the solution $\alpha_N$ to the LP~\eqref{eq:LP} satisfies
\begin{align}
\label{eq:hat_alpha_explicit}
&\epsilon_{\mathrm{o}}(1-\alpha_N)\leq \epsilon_{\mathrm{o}}(1-\hat{\alpha}_N)\\
&:=\dfrac{\gamma_1(\gamma_N+\eta)\prod_{j=0}^{N-2}(\eta+\gamma_{N-j})}{\prod_{j=0}^{N-1}(1+\gamma_{N-j})-\gamma_1\prod_{j=0}^{N-2}(\eta+\gamma_{N-j})} 
\nonumber
\end{align}
and $\alpha_N>0$ holds if
 \begin{align}
\label{eq:hat_alpha_explicit_N}
N>\underline{N}:=1+\dfrac{\log(\overline{\gamma})-\log(\epsilon_{\mathrm{o}})}{\log(1+\overline{\gamma})-\log(\overline{\gamma}+\eta)}.
\end{align}
If additionally $\gamma_k=\overline{\gamma}$, $\forall k\in\mathbb{I}_{\geq 1}$, then $\alpha_N=\hat{\alpha}_N$.
\end{theorem}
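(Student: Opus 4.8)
The plan is to solve the linear program~\eqref{eq:LP} explicitly under the simplifying hypotheses $\underline{\gamma}_{\mathrm o}=\overline{\gamma}_{\mathrm o}=1$ and $\epsilon_{\mathrm o}\in(0,1)$. With $\underline{\gamma}_{\mathrm o}=\overline{\gamma}_{\mathrm o}=1$, constraint~\eqref{eq:LP_4} forces $\tilde W_k=\tilde\sigma_k$ for all $k$, and then~\eqref{eq:LP_5} becomes the scalar contraction $\tilde\sigma_{k+1}\le \eta\,\tilde\sigma_k+\tilde\ell_k$ with $\eta=1-\epsilon_{\mathrm o}$. The remaining decision variables are $\tilde\ell_0,\dots,\tilde\ell_{N-1}\ge 0$, $\tilde\sigma_0=1,\tilde\sigma_1,\dots,\tilde\sigma_N\ge 0$, and the scalar $\tilde V$, subject to~\eqref{eq:LP_6}, \eqref{eq:LP_7}, and the contraction inequality. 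The objective is $\min \sum_{k=1}^{N-1}\tilde\ell_k-\tilde V$. I would first argue that at the optimum the contraction~\eqref{eq:LP_5} holds with equality, i.e.\ $\tilde\sigma_{k+1}=\eta\,\tilde\sigma_k+\tilde\ell_k$, since decreasing $\tilde\sigma_{k+1}$ relaxes all downstream constraints and makes~\eqref{eq:LP_6}, \eqref{eq:LP_7} easier to satisfy while not affecting the objective directly; similarly $\tilde V$ should be taken as large as allowed, namely $\tilde V=\min_{k\in\mathbb{I}_{[1,N]}}\big(\sum_{j=1}^{k-1}\tilde\ell_j+\gamma_{N-k+1}\tilde\sigma_k\big)$.

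Next I would identify which of the controllability constraints~\eqref{eq:LP_6} and the $\tilde V$-constraints~\eqref{eq:LP_7} are active at the worst case. Following the structure of the LP solutions in~\cite{grune2009analysis,grune2010analysis}, I expect the binding constraints to be $k=0$ in~\eqref{eq:LP_6}, i.e.\ $\sum_{j=0}^{N-1}\tilde\ell_j=\gamma_N$, possibly together with~\eqref{eq:LP_6} at later indices, and $k=1$ in~\eqref{eq:LP_7}, giving $\tilde V=\tilde\ell_0\cdot 0+\gamma_N\tilde\sigma_1=\gamma_N\tilde\sigma_1$ — more precisely $\tilde V = \gamma_{N}\tilde\sigma_1$ when that is the minimizing index. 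The interplay is that a larger $\tilde\ell_0$ decreases the objective sum less than it increases $\tilde\sigma_1=\eta+\tilde\ell_0$ (hence $\tilde V$), so the worst case pushes $\tilde\ell_0$ toward its upper limit dictated by~\eqref{eq:LP_6} at every truncation. Carrying the equalities $\tilde\sigma_{k+1}=\eta\tilde\sigma_k+\tilde\ell_k$ and $\sum_{j=k}^{N-1}\tilde\ell_j=\gamma_{N-k}\tilde\sigma_k$ forward should produce a recursion for $\tilde\sigma_k$ of the form $\tilde\sigma_{k+1}=(\eta+\gamma_{N-k-1}-\gamma_{N-k})\tilde\sigma_k+\cdots$; telescoping this recursion from $k=0$ with $\tilde\sigma_0=1$ is where the products $\prod_{j=0}^{N-2}(\eta+\gamma_{N-j})$ and $\prod_{j=0}^{N-1}(1+\gamma_{N-j})$ in~\eqref{eq:hat_alpha_explicit} will emerge, and the denominator $\prod(1+\gamma)-\gamma_1\prod(\eta+\gamma)$ records the fixed-point/normalization that determines $\tilde V$ relative to $\sum\tilde\ell_k$.

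Having obtained $\hat\alpha_N$ as the value of this restricted LP (obtained by imposing a specific active set), I get $\alpha_N\ge\hat\alpha_N$ for free, since restricting to a subset of feasible points can only increase the minimum of~\eqref{eq:LP_1}, hence can only decrease $1-\alpha_N$; this gives the inequality~\eqref{eq:hat_alpha_explicit}. For the horizon bound~\eqref{eq:hat_alpha_explicit_N} I would use the monotone-in-$\gamma_k$ worst case $\gamma_k=\overline\gamma$ for all $k$: then the products collapse to powers, $1-\hat\alpha_N$ becomes an explicit ratio $\frac{\overline\gamma(\overline\gamma+\eta)(\eta+\overline\gamma)^{N-1}}{(1+\overline\gamma)^N-\overline\gamma(\eta+\overline\gamma)^{N-1}}$-type expression, and $\alpha_N>0$ is equivalent to the denominator exceeding the numerator, i.e.\ $(1+\overline\gamma)^N>\overline\gamma\,(\eta+\overline\gamma)^{N-1}(1+\text{something})$; taking logarithms and solving the resulting linear inequality in $N$ yields exactly~\eqref{eq:hat_alpha_explicit_N}. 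Finally, for the equality claim, in the case $\gamma_k=\overline\gamma$ for all $k$ I would exhibit a feasible point of the full LP~\eqref{eq:LP} attaining $\hat\alpha_N$ — using the sequence produced by the active-set equalities above — and verify it satisfies all constraints, so $\alpha_N\le\hat\alpha_N$ as well, giving $\alpha_N=\hat\alpha_N$. The main obstacle I anticipate is proving that the guessed active set is genuinely optimal for general $\gamma_k$ (to get the clean telescoping), rather than merely a valid restriction; this is handled by the weaker $\ge$ statement in the constant case by explicit primal construction, and for the inequality direction no optimality of the active set is needed — which is presumably why the theorem states only $\alpha_N\ge\hat\alpha_N$ in general and $\alpha_N=\hat\alpha_N$ under the constant bound.
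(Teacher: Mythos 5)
There is a genuine gap, and it is the central logical step: the direction of the bound you get from your construction is reversed. The LP~\eqref{eq:LP} defines $\alpha_N$ through its \emph{minimum}, $\epsilon_{\mathrm{o}}(\alpha_N-1)=\min(\cdot)$. If you compute $\hat{\alpha}_N$ from a \emph{restricted} problem (imposing a guessed active set, i.e.\ adding equality constraints), the restricted minimum can only be \emph{larger} than the true minimum, so you obtain $\hat{\alpha}_N\geq \alpha_N$ — an upper bound on $\alpha_N$, which is the opposite of the claim $\alpha_N\geq\hat{\alpha}_N$ in~\eqref{eq:hat_alpha_explicit} and is useless for certifying stability. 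Your sentence ``restricting to a subset of feasible points can only increase the minimum \dots\ this gives the inequality'' therefore proves the reverse inequality. The same issue affects your final step: exhibiting a feasible point of the full LP attaining $\hat{\alpha}_N$ again only shows $\alpha_N\leq\hat{\alpha}_N$, so your two arguments combined never deliver the lower bound that constitutes the theorem. To lower-bound $\alpha_N$ you must \emph{relax} the LP (drop constraints) and solve the relaxed problem \emph{exactly}; this is what the paper does: it removes the constraints~\eqref{eq:LP_6} for $k\in\mathbb{I}_{[1,N-1]}$ and the nonnegativity constraints, which immediately gives $\hat{\alpha}_N\leq\alpha_N$, and then the real work is proving which constraints of the relaxed LP are active at its optimum (perturbation/exchange arguments, Lemma~\ref{lemma:sum_nonnegative_app}) so that the telescoping via Lemmas~\ref{lemma:analytic_ab_app} and \ref{lemma:induction_formula_app} yields the closed form. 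Your ``I expect the binding constraints to be\dots'' guesses cannot substitute for this, because an unverified active set gives no bound in the needed direction.

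The parts of your plan that do survive are the reduction $\tilde{W}_k=\tilde{\sigma}_k$, the recursion $\tilde{\sigma}_{k+1}=\eta\tilde{\sigma}_k+\tilde{\ell}_k$ at the optimum (though your justification is off: the minimizer wants $\tilde{\sigma}$ \emph{large}, since $\tilde{\sigma}_k$ appears on the right-hand side of the upper bounds~\eqref{eq:LP_6}--\eqref{eq:LP_7} and a larger $\tilde{\sigma}_1$ permits a larger $\tilde{V}$), the monotonicity argument $\gamma_k\leq\overline{\gamma}$ for the horizon bound~\eqref{eq:hat_alpha_explicit_N}, and the idea that for $\gamma_k\equiv\overline{\gamma}$ equality follows by checking feasibility — but in the paper this feasibility check goes the other way: one verifies that the \emph{relaxed} LP's minimizer satisfies the dropped constraints of the full LP, which yields $\alpha_N\leq\hat{\alpha}_N$ and hence equality.
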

\change{The proof is detailed in Appendix~\ref{sec:app_2}, based on Appendix~\ref{app:analytic_terminal_sigma_W}.} 
This result provides an easy to evaluate analytical bound for the LP analysis in Theorem~\ref{thm:main} in case $\sigma=W$. 
In particular, Equation~\eqref{eq:hat_alpha_explicit} provides a direct formula how the cost-controllability constants $\gamma_k$, the cost-detectability constant $\eta\in(0,1)$, and the prediction horizon $N$ yield a performance bound $\alpha_N$. 

Next, we consider the special case $\sigma=\ell_{\min}$ (positive definite stage cost). 
In this case, Assumption~\ref{ass:stab} reduces to the exponential cost controllability condition used in~\cite{grune2017nonlinear,worthmann2015model,tuna2006shorter,grune2008infinite,grune2009analysis,grune2010analysis}, while Assumption~\ref{ass:detect} holds trivially with $W=0$, $\epsilon_{\mathrm{o}}=1$.
\begin{theorem}
\cite{grune2009analysis,grune2010analysis}
\label{thm:analytic_grune}
Suppose $\underline{\gamma}_{\mathrm{o}}=\overline{\gamma}_{\mathrm{o}}=0$, $\epsilon_{\mathrm{o}}=1$. 
Then, the solution $\alpha_N$ to the LP~\eqref{eq:LP} satisfies
\begin{align}
\label{eq:hat_alpha_explicit_grune}
&\alpha_N\geq \hat{\alpha}_N:=1-\dfrac{ (\gamma_N-1)\prod_{j=2}^N(\gamma_j-1)}{\prod_{j=2}^N\gamma_j -\prod_{j=2}^N(\gamma_j-1)}.
\end{align}
Furthermore, if $\gamma_k=\sum_{j=0}^{k-1}c_j$, $c_{k+k_2}\leq c_{k}c_{k_1}$, $c_k\geq 0$, $k,k_2\in\mathbb{I}_{\geq 1}$, then $\hat{\alpha}_N=\alpha_N$.
\end{theorem}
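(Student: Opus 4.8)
The plan is to specialize the general LP~\eqref{eq:LP} to the setting $\underline{\gamma}_{\mathrm{o}}=\overline{\gamma}_{\mathrm{o}}=0$, $\epsilon_{\mathrm{o}}=1$, and show that it collapses to the LP studied in~\cite{grune2009analysis,grune2010analysis}, whose analytical solution is~\eqref{eq:hat_alpha_explicit_grune}. First I would observe that~\eqref{eq:LP_4} with $\underline{\gamma}_{\mathrm{o}}=\overline{\gamma}_{\mathrm{o}}=0$ together with $\tilde{W}_k\geq 0$ forces $\tilde{W}_k=0$ for all $k\in\mathbb{I}_{[0,N]}$. Then~\eqref{eq:LP_5} becomes $0\leq-\tilde{\sigma}_k+\tilde{\ell}_k$, i.e., $\tilde{\sigma}_k\leq\tilde{\ell}_k$ for $k\in\mathbb{I}_{[0,N-1]}$. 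Since $\tilde{\sigma}_k$ appears elsewhere only through the right-hand sides of~\eqref{eq:LP_6} and~\eqref{eq:LP_7} (with nonnegative coefficients $\gamma_{\cdot}\geq 0$) and is otherwise free subject to $\tilde{\sigma}_k\geq 0$, the worst case (smallest objective) is attained by pushing $\tilde{\sigma}_k$ as large as allowed, i.e., $\tilde{\sigma}_k=\tilde{\ell}_k$; the constraint $\tilde{\sigma}_0=1$ then reads $\tilde{\ell}_0=1$. For $k=N$ the variable $\tilde{\sigma}_N$ appears only in~\eqref{eq:LP_7} with $k=N$, namely $\tilde{V}\leq\sum_{j=1}^{N-1}\tilde{\ell}_j+\gamma_1\tilde{\sigma}_N$, and since $\gamma_1\geq \ell_{\min}$-type bound gives $\gamma_1\geq 1$ in this normalization (indeed $V_1\geq \ell_{\min}=\sigma$ forces $\gamma_1\geq1$, or one simply keeps $\tilde\sigma_N$ unconstrained above), this constraint is non-binding and can be dropped. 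What remains, after eliminating $\tilde{\sigma}$ and $\tilde{W}$ and renaming, is exactly: minimize $\sum_{k=1}^{N-1}\tilde{\ell}_k-\tilde{V}$ over $\tilde{\ell}_k\geq 0$, $\tilde{V}\in\mathbb{R}$, subject to $\tilde{\ell}_0=1$, $\sum_{j=k}^{N-1}\tilde{\ell}_j\leq\gamma_{N-k}\tilde{\ell}_k$ for $k\in\mathbb{I}_{[0,N-1]}$, and $\tilde{V}\leq\sum_{j=1}^{k-1}\tilde{\ell}_j+\gamma_{N-k+1}\tilde{\ell}_k$ for $k\in\mathbb{I}_{[1,N-1]}$ (plus the trivial $k=N$ one). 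This is precisely the LP from~\cite[Sec.~3]{grune2009analysis} (equivalently the one solved in~\cite{grune2010analysis}), after the index reversal $\ell_k \leftrightarrow \tilde\ell_{N-1-k}$ or a direct identification; note $\alpha_N-1$ here equals the corresponding $\alpha-1$ there because $\epsilon_{\mathrm{o}}=1$ makes~\eqref{eq:LP_1} read $\alpha_N-1=$ (the minimum).

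Next I would invoke the known analysis of that LP. The cited papers show the minimizer has a specific ``worst-case'' structure — the controllability constraints~\eqref{eq:LP_6} are active, yielding a recursion for the $\tilde\ell_k$ in terms of the $\gamma_j$ — and carrying out this recursion produces the closed form $\hat\alpha_N = 1-\frac{(\gamma_N-1)\prod_{j=2}^N(\gamma_j-1)}{\prod_{j=2}^N\gamma_j-\prod_{j=2}^N(\gamma_j-1)}$; this gives the inequality $\alpha_N\geq\hat\alpha_N$ in~\eqref{eq:hat_alpha_explicit_grune}. For the equality claim under the submultiplicativity hypothesis $\gamma_k=\sum_{j=0}^{k-1}c_j$ with $c_{k+k_2}\leq c_k c_{k_1}$, I would cite~\cite[Thm.~5.4]{grune2009analysis} (or the corresponding statement in~\cite{grune2010analysis}): under this condition the relaxation used to obtain $\hat\alpha_N$ is tight, i.e., the candidate worst-case sequence is actually LP-feasible and optimal, so $\alpha_N=\hat\alpha_N$. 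Essentially the whole proof is a reduction, so it suffices to verify carefully that the constraint elimination above is lossless (no feasible point of the reduced LP fails to lift to a feasible point of~\eqref{eq:LP} with the same objective, and conversely every optimal point of~\eqref{eq:LP} projects to a feasible point of the reduced LP).

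The main obstacle I anticipate is bookkeeping rather than conceptual: matching index conventions between~\eqref{eq:LP} and the LP in~\cite{grune2009analysis,grune2010analysis} (which is typically written with the stage index running forward from the current time, whereas here the controllability bound~\eqref{eq:LP_6} is written with a tail sum and the value-function bound~\eqref{eq:LP_7} with a head sum), and confirming that the $\tilde V$ variable and constraint~\eqref{eq:LP_7} correspond exactly to the ``$V_N$ along the optimal trajectory starting one step later'' bound used there. A secondary point to check is the elimination of $\tilde\sigma_k$: one must argue that at optimum $\tilde\sigma_k=\tilde\ell_k$ is \emph{without loss of generality} — since decreasing $\tilde\sigma_k$ only loosens~\eqref{eq:LP_5} but tightens the upper bounds~\eqref{eq:LP_6}--\eqref{eq:LP_7} (hence cannot decrease the objective), while increasing it past $\tilde\ell_k$ violates~\eqref{eq:LP_5}; this monotonicity argument is routine but should be stated. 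Once these identifications are in place, the theorem is immediate from the results of~\cite{grune2009analysis,grune2010analysis}.
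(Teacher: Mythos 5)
Your proposal is correct and takes essentially the same route as the paper: the paper also justifies this theorem by noting that with $\underline{\gamma}_{\mathrm{o}}=\overline{\gamma}_{\mathrm{o}}=0$, $\epsilon_{\mathrm{o}}=1$ the LP~\eqref{eq:LP} collapses (via $\tilde{W}_k=0$ and, w.l.o.g., $\tilde{\sigma}_k=\tilde{\ell}_k$, exactly your elimination argument) to the LP of \cite{grune2009analysis,grune2010analysis}, so that \eqref{eq:hat_alpha_explicit_grune} and the equality under submultiplicativity follow directly from \cite[Thm.~5.4]{grune2010analysis}. The paper additionally sketches a self-contained alternative by taking the limit $\epsilon_{\mathrm{f}}\rightarrow\infty$, $\underline{c}_{\mathrm{f}}=\overline{c}_{\mathrm{f}}=0$ in Theorem~\ref{thm:analytic_terminal_pdf} (Appendices~\ref{app:analytic_terminal_pdf} and \ref{sec:app_2}), but that is not needed for your citation-based argument.
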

This result follows directly from~\cite[Thm.~5.4]{grune2010analysis}, \change{compare also the proof in Appendix~\ref{sec:app_2}, based on Appendix~\ref{app:analytic_terminal_pdf}.}

The main benefit of Theorems~\ref{thm:analytic_sigma_W}--\ref{thm:analytic_grune} is that the analytical expressions provide a compact and easy to evaluate condition and hence can also provide simpler guidelines in terms of the design of the stage cost. 
In particular, in Theorem~\ref{thm:analytic_sigma_W} the suboptimality gap $1-\alpha_N$ and the stabilizing horizon $\underline{N}$ mainly depend on the fraction $\frac{\eta+\overline{\gamma}}{1+\overline{\gamma}}=1-\frac{\epsilon_{\mathrm{o}}}{1+\overline{\gamma}}\in[0,1)$ while the bound in Theorem~\ref{thm:analytic_grune} depends on $\frac{\overline{\gamma}-1}{\overline{\gamma}}=1-\frac{1}{\overline{\gamma}}$. 
Hence, comparing the two bounds, Theorem~\ref{thm:analytic_sigma_W} provides insights how the cost-detectability constant $\epsilon_{\mathrm{o}}$ affects the stability and performance guarantees. 

Note that Theorem~\ref{thm:analytic_grune} showed $\hat{\alpha}_N=\alpha_N$ under the submultiplicativity condition ($c_{k_1+k_2}\leq c_{k_1}c_{k_2}$), while in Theorem~\ref{thm:analytic_sigma_W} we only showed equivalence for the simple case $\gamma_k=\overline{\gamma}$. 
We also observed this identity in all the considered numerical examples using horizon dependent bounds $\gamma_k$ and we conjecture that it holds under rather mild conditions, although a corresponding condition and proof are beyond the scope of this paper.

\section{Anaysis with a general terminal cost}
 \label{sec:terminal} 
In the following, we extend the setup to account for a general terminal cost $V_{\mathrm{f}}:X\rightarrow\mathbb{R}_{\geq 0}$ in the MPC design.
In particular, we demonstrate how a suitably chosen terminal cost may allow for stability results with shorter horizons $N$, while at the same time a too large terminal cost deteriorates the closed-loop performance. 
The practical relevance of the second result is also related to the fact that some MPC approaches advocate a large terminal cost to improve stability (cf., e.g., \cite{tuna2006shorter,grimm2005model}; \cite{limon2006stability}; and \cite{grune2009analysis,grune2010analysis}) without analysing the adversarial effect on the performance (and robustness). 

We first introduce the considered conditions on the terminal cost (Sec.~\ref{sec:terminal_prelim}). 
Then, a simple stability and performance analysis is presented (Sec.~\ref{sec:terminal_theory}).  
Finally, we derive a corresponding LP analysis (Sec.~\ref{sec:terminal_LP}) and provide an analytical solution (Sec.~\ref{sec:terminal_analytic}). 
%
\subsection{Setup}
\label{sec:terminal_prelim}
The corresponding finite-horizon cost is given by $\mathcal{J}_{N,\mathrm{f}}(x,u)=\sum_{k=0}^{N-1}\ell(x_u(k,x),u(k))+V_{\mathrm{f}}(x_u(N,x))$, with the value function $V_{N,\mathrm{f}}(x)=\inf_{u\in\mathbb{U}^N}\mathcal{J}_{N,\mathrm{f}}(x,u)$ and a minimizer $u_{N,\mathrm{f},x}^*\in\mathbb{U}^N$. 
The corresponding MPC control law $\mu_{\mathrm{f}}(x)$, the closed-loop state and input $x_{\mu_{\mathrm{f}}}(k),u_{\mu_{\mathrm{f}}}(k)$, $k\in\mathbb{I}_{\geq 0}$, and the infinite-horizon closed-loop performance $\mathcal{J}_\infty^{\mu_{\mathrm{f}}}(x_0)=\sum_{k=0}^\infty \ell(x_{\mu_{\mathrm{f}}}(k),u_{\mu_{\mathrm{f}}}(k))$ are defined analogously to Section~\ref{sec:problem}.

Analogous to Assumption~\ref{ass:stab}, we assume that the value function with the terminal cost is linearly bounded. 
\begin{assumption}
\label{ass:stab_term} (Exponential cost controllability)
There exist constants $\bar{\gamma}_{\mathrm{f}}\geq 0$, $\gamma_{k,\mathrm{f}}\in[0,\bar{\gamma}_{\mathrm{f}}]$, $k\in\mathbb{I}_{\geq 1}$, such that 
\begin{align}
\label{eq:stab_term}
V_{k,\mathrm{f}}(x)\leq \gamma_{k,\mathrm{f}}\sigma(x),\quad \forall x\in X,\quad k\in\mathbb{I}_{\geq 1}.
\end{align}
\end{assumption}
We consider the following conditions for the terminal cost.
\begin{assumption}
\label{ass:term} (Terminal cost)
There exist constants $\underline{c}_{\mathrm{f}},\overline{c}_{\mathrm{f}},\epsilon_{\mathrm{f}}>0$, such that for any $x\in X$:
\begin{subequations}
\label{eq:term}
\begin{align}
\label{eq:term_1}
\underline{c}_{\mathrm{f}}\sigma(x)\leq V_{\mathrm{f}}(x)\leq&\overline{c}_{\mathrm{f}}\sigma(x),\\
\label{eq:term_2}
\min_{u\in\mathbb{U}}V_{\mathrm{f}}(f(x,u))+\ell(x,u)\leq& (1+\epsilon_{\mathrm{f}}) V_{\mathrm{f}}(x), \\
\label{eq:term_3}
\underline{c}_{\mathrm{f}}\leq \dfrac{\gamma_{1,\mathrm{f}}}{1+\epsilon_{\mathrm{f}}}\leq& \overline{c}_{\mathrm{f}}.
\end{align}
\end{subequations}
\end{assumption}
Condition~\eqref{eq:term_1} reflects the boundedness of the terminal cost. 
Condition~\eqref{eq:term_2} can be viewed as a relaxed CLF condition, where $\epsilon_{\mathrm{f}}=0$ recovers the standard CLF condition~\cite{rawlings2017model,limon2006stability}. 
Analogous relaxed CLF conditions are considered  in~\cite[A~3]{tuna2006shorter}, \cite[Prop.~2]{reble2012improved}, \cite[Prop.~4]{kohler2021stability}, compare also earlier conditions in~\cite[Ass.~5]{grimm2005model} and \cite[Lemma~3]{reble2012unifying}. 
In the limit $\epsilon_{\mathrm{f}}\rightarrow\infty$, Inequality~\eqref{eq:term_2} becomes inactive, relaxing the assumption to boundedness of the terminal cost. 
Inequalities~\eqref{eq:term_3} are non-restrictive and ensure that the derived analytical bounds are tight. 
In particular,  in case the lower bound does not hold, Inequality~\eqref{eq:term_2} remains valid with the smaller constant $\epsilon_{\mathrm{f}}:=\frac{\gamma_{1,\mathrm{f}}}{\underline{c}_{\mathrm{f}}}-1$.\footnote{%
We have $\min_{u\in\mathbb{U}}V_{\mathrm{f}}(f(x,u))+\ell(x,u)=V_{1,\mathrm{f}}(x)\leq \gamma_{1,\mathrm{f}}\sigma(x)\leq \dfrac{\gamma_{1,\mathrm{f}}}{\underline{c}_{\mathrm{f}}}V_{\mathrm{f}}(x)$ and hence Inequality~\eqref{eq:term_2} holds with the specified $\epsilon_{\mathrm{f}}$. 
Note that the resulting constant $\epsilon_{\mathrm{f}}$ can be negative. In this case, Theorem~\ref{thm:grimm_terminal} ensures asymptotic stability for any $N>0$. }
Similarly, in case the upper bound does not hold, we can choose the smaller constant $\gamma_{1,\mathrm{f}}:=(1+\epsilon_{\mathrm{f}})\overline{c}_{\mathrm{f}}$. 
In Section~\ref{sec:term_design}, we investigate some simple design methods for the terminal cost and compute corresponding constants $\epsilon_{\mathrm{f}}$.

\begin{remark} 
\label{rk:terminal_set}
(Terminal set constraints)
In most of the related literature (cf.~\cite{rawlings2017model,limon2006stability,grune2008infinite,magni2001stabilizing,reble2012unifying,kohler2021stability})
the (approx.) CLF condition~\eqref{eq:term_2} is only assumed in some local terminal set. 
Similar to~\cite{limon2006stability,kohler2021stability}, such relaxed local assumptions can be considered by noting that the terminal set constraint is implicitly satisfied on some sublevel sets of the value/Lyapunov function, compare also the discussion in Remark~\ref{rk:constraints}.
\end{remark}

\subsection{Stability and performance analysis}
\label{sec:terminal_theory}
The following theorem extends the stability and performance analysis in Theorem~\ref{thm:grimm} to general terminal costs.  
\begin{theorem}
\label{thm:grimm_terminal}
Let Assumptions~\ref{ass:set}, \ref{ass:detect}, \ref{ass:stab_term}, and \ref{ass:term} hold. 
Then, for any $x\in X$, the function $Y_{N,\mathrm{f}}:=V_{N,\mathrm{f}}+W$ satisfies
\begin{subequations}
\label{eq:Lyap_terminal}
\begin{align}
\label{eq:Lyap_terminal_1}
\epsilon_{\mathrm{o}}\sigma(x)\leq Y_{N,\mathrm{f}}(x)\leq& (\gamma_{N,\mathrm{f}}+\gamma_{\mathrm{o}})\sigma(x),\\
\label{eq:Lyap_terminal_2}
Y_{N,\mathrm{f}}(f(x,\mu_{\mathrm{f}}(x)))-Y_{N,\mathrm{f}}(x)\leq &-\epsilon_{\mathrm{o}}\alpha_{N,\mathrm{f}} \sigma(x),
\end{align}
\end{subequations}
with $\alpha_{N,\mathrm{f}}=:1-\dfrac{(\gamma_{N,\mathrm{f}}+\gamma_{\mathrm{o}})\epsilon_{\mathrm{f}}\overline{\gamma}_{\mathrm{f}}}{\epsilon_{\mathrm{o}}((N-1)\epsilon_{\mathrm{o}}(1+\epsilon_{\mathrm{f}})+\overline{\gamma}_{\mathrm{f}})}$. 
If additionally, $N>\underline{N}_{\mathrm{f}}:=1+\dfrac{\epsilon_{\mathrm{f}}}{1+\epsilon_{\mathrm{f}}}\dfrac{\overline{\gamma}_{\mathrm{f}}(\overline{\gamma}_{\mathrm{f}}+\gamma_{\mathrm{o}})}{\epsilon_{\mathrm{o}}^2}-\dfrac{\overline{\gamma}_{\mathrm{f}}}{\epsilon_{\mathrm{o}}(1+\epsilon_{\mathrm{f}})}$, then $\alpha_{N,\mathrm{f}}\in(0,1]$. 
Furthermore, for any $x_0\in X$ the set $\mathcal{A}$ is asymptotically stable for the corresponding closed loop $x_{\mu_{\mathrm{f}}}(\cdot)$ and the following performance bound holds
\begin{align}
\label{eq:performance_terminal}
&\alpha_{N,\mathrm{f}}(\mathcal{J}^{\mu_{\mathrm{f}}}_\infty(x_0)+W(x_0))\nonumber\\
\leq&\left[1+\frac{\overline{c}_{\mathrm{f}}}{\epsilon_{\mathrm{o}}}\left[1-\frac{\epsilon_{\mathrm{o}}}{\overline{\gamma}_{\mathrm{f}}+\gamma_{\mathrm{o}}}\right]^N \right]Y_{\infty}(x_0).
\end{align}
\end{theorem}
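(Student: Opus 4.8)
\textbf{Proof plan for Theorem~\ref{thm:grimm_terminal}.}

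The plan is to mirror the structure of the proof of Theorem~\ref{thm:grimm}, treating $Y_{N,\mathrm{f}}=V_{N,\mathrm{f}}+W$ as a candidate Lyapunov function, but now carefully tracking the extra slack introduced by the relaxed CLF condition~\eqref{eq:term_2}. First I would establish the sandwich bounds~\eqref{eq:Lyap_terminal_1}: the lower bound follows from $W\ge 0$, $V_{N,\mathrm{f}}\ge \epsilon_{\mathrm{o}}\sigma$ via a telescoping sum of~\eqref{eq:detect_2} along the optimal trajectory together with $V_{\mathrm{f}}\ge\underline{c}_{\mathrm{f}}\sigma\ge 0$ at the terminal state (one has to be slightly careful that the telescoping of~\eqref{eq:detect_2} over $N$ steps gives $W(x_u(N))-W(x)\le -\epsilon_{\mathrm{o}}\sum_{k=0}^{N-1}\sigma(x_u(k))+\sum_{k=0}^{N-1}\ell$, and using $W(x_u(N))\ge 0$, $\sigma(x_u(k))\ge\sigma(x_u(0))$ is \emph{not} available, so instead I would just use the single term $k=0$, i.e.\ $W(f(x,u))\ge W(x)-\epsilon_{\mathrm{o}}\sigma(x)+\ell(x,u)$, combined with $V_{N,\mathrm{f}}\ge 0$ and a direct estimate $Y_{N,\mathrm{f}}(x)\ge \epsilon_{\mathrm{o}}\sigma(x)$). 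The upper bound uses $W\le\gamma_{\mathrm{o}}\sigma$ (with $\gamma_{\mathrm{o}}=\overline{\gamma}_{\mathrm{o}}$, adopting the notation of Theorem~\ref{thm:grimm}) and $V_{N,\mathrm{f}}\le\gamma_{N,\mathrm{f}}\sigma$ from Assumption~\ref{ass:stab_term}.

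Next I would derive the decrease inequality~\eqref{eq:Lyap_terminal_2}. The standard dynamic-programming argument gives $V_{N,\mathrm{f}}(f(x,\mu_{\mathrm{f}}(x)))\le V_{N-1,\mathrm{f}}(f(x,\mu_{\mathrm{f}}(x)))$ only if $V_{\mathrm{f}}$ were a true CLF; with the relaxed condition~\eqref{eq:term_2} one instead prepends/appends pieces and picks up a factor. The key estimate to prove (analogous to the classical ``$V_{N,\mathrm{f}}(f(x,\mu_{\mathrm{f}}(x)))\le V_{N,\mathrm{f}}(x)-\ell(x,\mu_{\mathrm{f}}(x))+\epsilon_{\mathrm{f}}V_{\mathrm{f}}(\cdot)$''-type bound) is something like
\begin{align}
V_{N,\mathrm{f}}(f(x,\mu_{\mathrm{f}}(x)))\le V_{N,\mathrm{f}}(x)-\ell(x,\mu_{\mathrm{f}}(x))+\epsilon_{\mathrm{f}}\,\min_{k}\left(\text{tail term}\right),\nonumber
\end{align}
which one then combines with the controllability bound $V_{N,\mathrm{f}}\le\gamma_{N,\mathrm{f}}\sigma$ and a relation expressing the tail cost via the $\gamma_{k,\mathrm{f}}$. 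I would follow the LP-type bookkeeping from the proof of Theorem~\ref{thm:grimm}: use $V_{N,\mathrm{f}}(x)=\sum_{k=0}^{N-1}\ell(x^*(k),u^*(k))+V_{\mathrm{f}}(x^*(N))$, bound $\ell(x^*(k),u^*(k))$ from above using $\sum_{j=k}^{N-1}\ell\le V_{N-k,\mathrm{f}}(x^*(k))\le\gamma_{N-k,\mathrm{f}}\sigma(x^*(k))$ together with~\eqref{eq:detect_2} to propagate $\sigma(x^*(k))$, and extract the worst-case via a geometric-sum argument. The constant $\alpha_{N,\mathrm{f}}$ with its $(N-1)\epsilon_{\mathrm{o}}(1+\epsilon_{\mathrm{f}})+\overline{\gamma}_{\mathrm{f}}$ denominator should fall out of exactly this geometric bookkeeping, and the horizon bound $\underline{N}_{\mathrm{f}}$ is just the threshold making $\alpha_{N,\mathrm{f}}>0$.

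Finally, the performance bound~\eqref{eq:performance_terminal}. Here the new feature compared to~\eqref{eq:performance} is the exponentially decaying correction factor $1+\frac{\overline{c}_{\mathrm{f}}}{\epsilon_{\mathrm{o}}}[1-\frac{\epsilon_{\mathrm{o}}}{\overline{\gamma}_{\mathrm{f}}+\gamma_{\mathrm{o}}}]^N$, which quantifies how the terminal cost inflates the finite-horizon value relative to $V_\infty$. The approach is: (i) from~\eqref{eq:Lyap_terminal_2}, sum over the closed loop to get $\alpha_{N,\mathrm{f}}\sum_{k}\epsilon_{\mathrm{o}}\sigma(x_{\mu_{\mathrm{f}}}(k))\le Y_{N,\mathrm{f}}(x_0)$, and relate $\sum_k\ell(x_{\mu_{\mathrm{f}}}(k),u_{\mu_{\mathrm{f}}}(k))$ to $\sum_k\sigma$ via a telescoped~\eqref{eq:detect_2}, yielding $\alpha_{N,\mathrm{f}}(\mathcal{J}^{\mu_{\mathrm{f}}}_\infty(x_0)+W(x_0))\le Y_{N,\mathrm{f}}(x_0)=V_{N,\mathrm{f}}(x_0)+W(x_0)$; (ii) bound $V_{N,\mathrm{f}}(x_0)$ above by $V_\infty(x_0)$ plus an error. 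For step (ii) the idea is that an infinite-horizon optimal (or near-optimal) input, truncated to $N$ steps with the terminal cost $V_{\mathrm{f}}$ appended, gives $V_{N,\mathrm{f}}(x_0)\le V_\infty(x_0) + V_{\mathrm{f}}(x_u^\infty(N,x_0)) \le V_\infty(x_0)+\overline{c}_{\mathrm{f}}\sigma(x_u^\infty(N,x_0))$; then one shows that along the infinite-horizon optimal trajectory $\sigma$ decays geometrically, $\sigma(x_u^\infty(N,x_0))\le\frac{1}{\epsilon_{\mathrm{o}}}[1-\frac{\epsilon_{\mathrm{o}}}{\overline{\gamma}_{\mathrm{f}}+\gamma_{\mathrm{o}}}]^N V_\infty(x_0)$ (or similar), using that $V_\infty+W$ is itself a Lyapunov-type function with the given contraction rate — this is the same kind of estimate as~\eqref{eq:Lyap_terminal} applied ``at $N=\infty$'', giving the factor $1-\epsilon_{\mathrm{o}}/(\overline{\gamma}_{\mathrm{f}}+\gamma_{\mathrm{o}})$. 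Combining (i) and (ii) yields~\eqref{eq:performance_terminal}, and asymptotic stability of $\mathcal{A}$ follows from~\eqref{eq:Lyap_terminal} by the standard Lyapunov argument (together with Assumption~\ref{ass:set} to convert $\sigma$-bounds into $\|\cdot\|_{\mathcal{A}}$-bounds). I expect the main obstacle to be step (ii) — pinning down the correct geometric decay of $\sigma$ along the infinite-horizon optimal trajectory and showing it produces exactly the stated base $1-\epsilon_{\mathrm{o}}/(\overline{\gamma}_{\mathrm{f}}+\gamma_{\mathrm{o}})$ rather than a looser rate; the decrease inequality~\eqref{eq:Lyap_terminal_2} with its precise $\alpha_{N,\mathrm{f}}$ is the second delicate point, since the relaxed-CLF slack $\epsilon_{\mathrm{f}}$ must be propagated through the geometric bookkeeping without over-counting. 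I would handle both by reducing to (or closely imitating) the LP analysis of Appendix~\ref{app:terminal_LP} rather than by ad hoc estimates.
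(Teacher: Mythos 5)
Your overall skeleton does match the paper's proof in Appendix~\ref{app:grimm_terminal}: the same Lyapunov candidate $Y_{N,\mathrm{f}}=V_{N,\mathrm{f}}+W$, the same derivation of~\eqref{eq:Lyap_terminal_1} (the lower bound indeed uses $V_{N,\mathrm{f}}\geq\ell_{\min}$ together with the one-step inequality~\eqref{eq:detect_2}; note your sketch both flips that inequality and weakens $V_{N,\mathrm{f}}\geq\ell_{\min}$ to $V_{N,\mathrm{f}}\geq 0$, which alone cannot produce the $\epsilon_{\mathrm{o}}\sigma$ bound), and the same two ingredients for~\eqref{eq:performance_terminal}, namely truncating the infinite-horizon optimizer to get $V_{N,\mathrm{f}}(x_0)\leq V_\infty(x_0)+\overline{c}_{\mathrm{f}}\sigma(x_{u^*_{\infty,x_0}}(N,x_0))$ and contracting $Y_\infty$ along that trajectory at rate $1-\epsilon_{\mathrm{o}}/(\overline{\gamma}_{\mathrm{f}}+\gamma_{\mathrm{o}})$. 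However, the core of the theorem — the decrease inequality with the \emph{specific} constant $\alpha_{N,\mathrm{f}}$ — is not actually supplied by your plan. The paper's mechanism is a case distinction on the size of the terminal cost at the end of the optimal open-loop trajectory, $V_{\mathrm{f}}(\overline{x}_N)$, against the threshold $\overline{\gamma}_{\mathrm{f}}(\gamma_{N,\mathrm{f}}+\gamma_{\mathrm{o}})\sigma(x)/((N-1)\epsilon_{\mathrm{o}}(1+\epsilon_{\mathrm{f}})+\overline{\gamma}_{\mathrm{f}})$: below the threshold, appending the relaxed-CLF input from~\eqref{eq:term_2} increases the cost by at most $\epsilon_{\mathrm{f}}V_{\mathrm{f}}(\overline{x}_N)$; above it, the telescoped detectability bound~\eqref{eq:sigma_sum_terminal} forces $\epsilon_{\mathrm{o}}\sum_{k=0}^{N-1}\sigma(\overline{x}_k)$ to be small, and an averaging (pigeonhole) argument over the $N-1$ intermediate times yields some $k_x$ with small $\sigma(\overline{x}_{k_x})$, at which Assumption~\ref{ass:stab_term} is invoked to rebuild the candidate. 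The threshold is chosen so both cases give exactly $(1-\alpha_{N,\mathrm{f}})\epsilon_{\mathrm{o}}\sigma(x)$; in particular the $(N-1)$ in the denominator comes from this averaging, not from any ``geometric-sum bookkeeping''. Your fallback of reducing to the LP analysis of Appendix~\ref{app:terminal_LP} does not close this gap either: the LP defines the (different, generally tighter) $\alpha_{N,\mathrm{f}}$ of Theorem~\ref{thm:terminal_LP}, not the closed-form constant claimed here, and in the paper the LP proof in fact relies on Theorem~\ref{thm:grimm_terminal} (for the case $\sigma(x)=0$ and for finiteness of the minimum), not the other way around.

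A second, smaller but genuine, flaw is step (i) of your performance argument. From~\eqref{eq:Lyap_terminal_2} you only obtain $\alpha_{N,\mathrm{f}}\epsilon_{\mathrm{o}}\sum_k\sigma(x_{\mu_{\mathrm{f}}}(k))\leq Y_{N,\mathrm{f}}(x_0)$, and the telescoped~\eqref{eq:detect_2} gives $\epsilon_{\mathrm{o}}\sum_k\sigma(x_{\mu_{\mathrm{f}}}(k))\leq \mathcal{J}^{\mu_{\mathrm{f}}}_\infty(x_0)+W(x_0)$; both point the wrong way, because detectability only lower-bounds $\ell$ by $\epsilon_{\mathrm{o}}\sigma$ plus a $W$-difference, so you cannot conclude $\alpha_{N,\mathrm{f}}(\mathcal{J}^{\mu_{\mathrm{f}}}_\infty(x_0)+W(x_0))\leq Y_{N,\mathrm{f}}(x_0)$ along this route. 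The paper instead keeps $\ell$ explicit: it combines the value-function decrease~\eqref{eq:V_decrease_terminal} (which still contains $\ell(x,\mu_{\mathrm{f}}(x))$) with $(1-\alpha_{N,\mathrm{f}})$ times~\eqref{eq:detect_2}, yielding $V_{N,\mathrm{f}}(f(x,\mu_{\mathrm{f}}(x)))-V_{N,\mathrm{f}}(x)+(1-\alpha_{N,\mathrm{f}})(W(f(x,\mu_{\mathrm{f}}(x)))-W(x))\leq -\alpha_{N,\mathrm{f}}\ell(x,\mu_{\mathrm{f}}(x))$, which telescopes directly to the desired bound using $\alpha_{N,\mathrm{f}}\leq 1$ and $V_{N,\mathrm{f}},W\geq 0$. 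With these two repairs — the case-distinction/averaging derivation of $\alpha_{N,\mathrm{f}}$ and the weighted telescoping — the remainder of your plan coincides with the paper's proof.
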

\change{The proof is detailed in Appendix~\ref{app:grimm_terminal}.} 
In case the terminal cost $V_{\mathrm{f}}$ is a very bad approximation of a CLF ({$\epsilon_{\mathrm{f}} \gg 1$), the bounds on the prediction horizon $N$ to ensure asymptotic stability essentially reduce to the bounds in Theorem~\ref{thm:grimm} without a terminal cost (assuming $\overline{\gamma}\approx\overline{\gamma}_{\mathrm{f}}$). 
In case the terminal cost is a sufficiently close approximation to the CLF ($\epsilon_{\mathrm{f}}<\frac{\epsilon_{\mathrm{o}}}{\overline{\gamma}_{\mathrm{f}}+\gamma_{\mathrm{o}}}$), we can ensure asymptotic stability for any prediction horizon $N\geq 1$. 
This improves the standard results in~\cite{rawlings2017model,limon2006stability}, which assume $\epsilon_{\mathrm{f}}=0$. 

In addition to the possible reduction of the stabilizing prediction horizon $N$, Inequality~\eqref{eq:performance_terminal} shows how a large terminal cost ($\overline{c}_{\mathrm{f}}\geq 1$) may deteriorate the closed-loop performance guarantees.  
In case of positive definite stage costs $\ell$, similar performance bounds capturing the effect of the terminal cost can be found in~\cite[Prop.~7]{jadbabaie2005stability}, \cite[Thm.~6.2]{grune2008infinite},  and \cite[Thm.~5]{kohler2021stability}. 
Further, in case $V_{\mathrm{f}}$ is a CLF ($\epsilon_{\mathrm{f}}=0$), we have $\alpha_{N,\mathrm{f}}=1$ and the performance bound~\eqref{eq:performance_terminal} is analogous to the exponentially decaying \textit{regret} bounds in~\cite{zhang2021regret} for unconstrained LQR.

 \subsection{Linear programming analysis}
\label{sec:terminal_LP}
The following theorem improves the quantitative bounds in Theorem~\ref{thm:grimm_terminal} by extending the LP analysis from Theorem~\ref{thm:main}. 
\begin{theorem}
\label{thm:terminal_LP}
Let Assumptions~\ref{ass:detect}, \ref{ass:stab_term}, and  \ref{ass:term} hold. 
Then, for any $x,x_0\in X$, Inequalities~\eqref{eq:Lyap_terminal} and \eqref{eq:performance_terminal} hold with $\alpha_{N,\mathrm{f}}$ according to the following LP:
\begin{subequations}
\label{eq:LP_terminal}
\begin{align}
\label{eq:LP_terminal_cost}
&(\alpha_{N,\mathrm{f}}-1)\epsilon_{\mathrm{o}}\nonumber\\
:=&\min_{\tilde{\ell},\tilde{W},\tilde{\sigma},\tilde{V},\tilde{V}_{\mathrm{f}}}
\sum_{k=1}^{N-1}\tilde{\ell}_k+\tilde{V}_{\mathrm{f}}-\tilde{V}\\
\label{eq:LP_terminal_nonneg_1}
\mathrm{s.t. ~}&\tilde{\sigma}_0=1,~\tilde{\ell}_k\geq 0,~k\in\mathbb{I}_{[0,N-1]},~\tilde{V}_{\mathrm{f}}\geq 0,\\
\label{eq:LP_terminal_nonneg_2}
&\tilde{\sigma}_k\geq 0,~\tilde{W}_k\geq 0,~k\in\mathbb{I}_{[0,N]},\\
\label{eq:LP_terminal_W_bound}
&\underline{\gamma}_{\mathrm{o}}\tilde{\sigma}_k\leq \tilde{W}_k\leq \overline{\gamma}_{\mathrm{o}}\tilde{\sigma}_k,~k\in\mathbb{I}_{[0,N]},\\
\label{eq:LP_terminal_W_decrease}
&\tilde{W}_{k+1}-\tilde{W}_k\leq -\epsilon_{\mathrm{o}}\tilde{\sigma}_k+\tilde{\ell}_k,~k\in\mathbb{I}_{[0,N-1]},\\
\label{eq:LP_terminal_V_bound}
&\sum_{j=k}^{N-1}\tilde{\ell}_j+\tilde{V}_{\mathrm{f}}\leq \gamma_{N-k,\mathrm{f}}\tilde{\sigma}_k,~k\in\mathbb{I}_{[0,N-1]},\\
\label{eq:LP_terminal_V_next_bound}
&\tilde{V}\leq \sum_{j=1}^{k-1}\tilde{\ell}_j+\gamma_{N-k+1,\mathrm{f}}\tilde{\sigma}_k,~k\in\mathbb{I}_{[1,N]},\\
\label{eq:LP_terminal_Vf_next_bound}
&\tilde{V}\leq \sum_{j=1}^{N-1}\tilde{\ell}_j+(1+\epsilon_{\mathrm{f}})\tilde{V}_{\mathrm{f}},\\
\label{eq:LP_terminal_Vf_bound}
&\underline{c}_{\mathrm{f}}\tilde{\sigma}_N\leq\tilde{V}_{\mathrm{f}}\leq \overline{c}_{\mathrm{f}}\tilde{\sigma}_N. 
\end{align}
\end{subequations}
\end{theorem}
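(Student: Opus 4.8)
The plan is to mirror the derivation of Theorem~\ref{thm:main}, but carrying along the extra terminal-cost variable $\tilde V_{\mathrm f}$. First I would establish the Lyapunov-type bounds~\eqref{eq:Lyap_terminal}. The lower bound $\epsilon_{\mathrm o}\sigma(x)\le Y_{N,\mathrm f}(x)$ follows by summing the dissipation inequality~\eqref{eq:detect_2} along an optimal trajectory for $V_{N,\mathrm f}$, together with the terminal lower bound $\underline c_{\mathrm f}\sigma\le V_{\mathrm f}$ and nonnegativity of $\ell$ and $W$; the upper bound $Y_{N,\mathrm f}(x)\le(\gamma_{N,\mathrm f}+\gamma_{\mathrm o})\sigma(x)$ is immediate from Assumptions~\ref{ass:stab_term} and~\ref{ass:detect}. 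The heart of the argument is the decrease~\eqref{eq:Lyap_terminal_2}: fix $x\in X$, let $u^*=u^*_{N,\mathrm f,x}$ be an optimal input sequence with predicted trajectory $x^*(k)=x_{u^*}(k,x)$, set $x^+=f(x,\mu_{\mathrm f}(x))=x^*(1)$, and abbreviate $\ell_k=\ell(x^*(k),u^*(k))$, $\sigma_k=\sigma(x^*(k))$, $W_k=W(x^*(k))$, $V_{\mathrm f}=V_{\mathrm f}(x^*(N))$. I would then observe that the normalized quantities $\tilde\sigma_k=\sigma_k/\sigma_0$, $\tilde W_k=W_k/\sigma_0$, $\tilde\ell_k=\ell_k/\sigma_0$, $\tilde V_{\mathrm f}=V_{\mathrm f}/\sigma_0$, together with a suitable $\tilde V$ chosen as the normalized value of $Y_{N,\mathrm f}(x^+)$ up to the tail terms, form a feasible point of the LP~\eqref{eq:LP_terminal}.

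The feasibility check is the bookkeeping core. Constraint~\eqref{eq:LP_terminal_nonneg_1}--\eqref{eq:LP_terminal_nonneg_2} are nonnegativity, trivially satisfied; \eqref{eq:LP_terminal_W_bound} is~\eqref{eq:detect_1} along the trajectory; \eqref{eq:LP_terminal_W_decrease} is~\eqref{eq:detect_2}; \eqref{eq:LP_terminal_Vf_bound} is~\eqref{eq:term_1} at $x^*(N)$. For~\eqref{eq:LP_terminal_V_bound}, note $\sum_{j=k}^{N-1}\ell_j+V_{\mathrm f}$ is exactly the cost-to-go from $x^*(k)$ of the tail of $u^*$ over the remaining $N-k$ steps, which upper-bounds $V_{N-k,\mathrm f}(x^*(k))\le\gamma_{N-k,\mathrm f}\sigma_k$ by Assumption~\ref{ass:stab_term} — here I must be careful that optimality of the full horizon-$N$ problem makes the tail sub-optimal but still feasible, so the bound goes the right way. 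The two upper bounds on $\tilde V$, i.e. \eqref{eq:LP_terminal_V_next_bound} and \eqref{eq:LP_terminal_Vf_next_bound}, come from two distinct sub-optimal control sequences for the problem started at $x^+$: for \eqref{eq:LP_terminal_V_next_bound} I prepend the first $k-1$ tail steps of $u^*$ and then switch to an optimal horizon-$(N-k+1)$ sequence from $x^*(k)$; for \eqref{eq:LP_terminal_Vf_next_bound} I use the full tail $u^*(1),\dots,u^*(N-1)$ followed by the CLF-type minimizing input from~\eqref{eq:term_2} at $x^*(N)$, giving tail cost $\sum_{j=1}^{N-1}\ell_j+(1+\epsilon_{\mathrm f})V_{\mathrm f}$. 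Since $V_{N,\mathrm f}(x^+)\le$ both candidates, and $Y_{N,\mathrm f}(x^+)=V_{N,\mathrm f}(x^+)+W_1$, the decrease $Y_{N,\mathrm f}(x^+)-Y_{N,\mathrm f}(x)$ normalized by $\sigma_0$ is bounded above by $\sum_{k=1}^{N-1}\tilde\ell_k+\tilde V_{\mathrm f}-\tilde V$ for the chosen feasible point, hence by the LP minimum $(\alpha_{N,\mathrm f}-1)\epsilon_{\mathrm o}$; rearranging yields~\eqref{eq:Lyap_terminal_2}.

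Finally, the performance bound~\eqref{eq:performance_terminal}: with \eqref{eq:Lyap_terminal} in hand, asymptotic stability of $\mathcal A$ follows from standard Lyapunov arguments (as in Theorem~\ref{thm:grimm}), and summing~\eqref{eq:Lyap_terminal_2} along the closed loop gives $\alpha_{N,\mathrm f}\sum_{k=0}^\infty\epsilon_{\mathrm o}\sigma(x_{\mu_{\mathrm f}}(k))\le Y_{N,\mathrm f}(x_0)$. The additional step — present already in Appendix~\ref{app:grimm_terminal} for Theorem~\ref{thm:grimm_terminal} and which I would simply invoke — is to relate $\mathcal J_\infty^{\mu_{\mathrm f}}(x_0)+W(x_0)$ and $Y_{N,\mathrm f}(x_0)$ to $Y_\infty(x_0)=V_\infty(x_0)+W(x_0)$, controlling the discrepancy between the finite-horizon-with-terminal-cost value and the infinite-horizon value by the geometrically small factor $\overline c_{\mathrm f}\epsilon_{\mathrm o}^{-1}(1-\epsilon_{\mathrm o}/(\overline\gamma_{\mathrm f}+\gamma_{\mathrm o}))^N$, using~\eqref{eq:detect_2} iterated $N$ times to bound $\sigma(x^*(N))$ and then~\eqref{eq:term_1}. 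I expect the main obstacle to be the careful combinatorial verification that \emph{all} of \eqref{eq:LP_terminal_V_bound}, \eqref{eq:LP_terminal_V_next_bound}, \eqref{eq:LP_terminal_Vf_next_bound} hold simultaneously for a \emph{single} feasible point — in particular managing the index shifts in the prefix/tail decompositions and ensuring the definition of $\tilde V$ is consistent with both upper bounds — rather than any single inequality, each of which is individually routine.
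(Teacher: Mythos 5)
Your overall route coincides with the paper's: construct from the optimal open-loop solution the tuple $(\tilde\ell,\tilde W,\tilde\sigma,\tilde V,\tilde V_{\mathrm f})$, verify the LP constraints via Assumptions~\ref{ass:detect}, \ref{ass:stab_term}, \ref{ass:term}, using the prefix-plus-optimal-tail candidate for \eqref{eq:LP_terminal_V_next_bound} and the tail appended by the input from \eqref{eq:term_2} for \eqref{eq:LP_terminal_Vf_next_bound}, and then feed the resulting one-step bound \eqref{eq:V_decrease_terminal} into the machinery of Theorem~\ref{thm:grimm_terminal}. However, your justification of \eqref{eq:LP_terminal_V_bound} is flawed: you argue that the tail of $u^*_{N,\mathrm f,x}$ is ``sub-optimal but still feasible, so the bound goes the right way.'' Feasibility of the tail only gives $\sum_{j=k}^{N-1}\tilde\ell_j+\tilde V_{\mathrm f}\geq V_{N-k,\mathrm f}(x_{u^*_{N,\mathrm f,x}}(k,x))$, which, combined with $V_{N-k,\mathrm f}\leq\gamma_{N-k,\mathrm f}\tilde\sigma_k$ from \eqref{eq:stab_term}, yields nothing -- the two inequalities point in opposite directions. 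What is actually needed (and what the paper uses) is the principle of optimality: the tail of an optimal horizon-$N$ sequence is optimal for the horizon-$(N-k)$ problem with terminal cost started at the $k$-th predicted state, so $\sum_{j=k}^{N-1}\tilde\ell_j+\tilde V_{\mathrm f}=V_{N-k,\mathrm f}(x_{u^*_{N,\mathrm f,x}}(k,x))$, and only this equality lets Assumption~\ref{ass:stab_term} deliver the upper bound \eqref{eq:LP_terminal_V_bound}. As written, this step of your argument would fail.

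Two smaller issues. First, normalizing by $\sigma(x)$ silently assumes $\sigma(x)>0$; the case $\sigma(x)=0$ must be treated separately (the decrease then holds for any $\alpha_{N,\mathrm f}$), or one keeps $\tilde\sigma_0$ general and rescales to $\tilde\sigma_0=1$ as the paper does. Second, your closing chain is inverted: the objective at your feasible point is $\geq$ the LP minimum $(\alpha_{N,\mathrm f}-1)\epsilon_{\mathrm o}$, and it is the \emph{negative} of that objective, together with one application of \eqref{eq:detect_2}, that upper-bounds $Y_{N,\mathrm f}(x^+)-Y_{N,\mathrm f}(x)$; saying the decrease is ``bounded above by the feasible objective, hence by the LP minimum'' does not parse. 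Similarly, the performance bound \eqref{eq:performance_terminal} does not follow from summing \eqref{eq:Lyap_terminal_2} alone (that controls $\sum_k\sigma$, not $\sum_k\ell$); it requires telescoping the combination of the $V_{N,\mathrm f}$-decrease with $(1-\alpha_{N,\mathrm f})$ times \eqref{eq:detect_2}, as done in Appendix~\ref{app:grimm_terminal}, which you do invoke. These last points are repairable bookkeeping, but the justification of \eqref{eq:LP_terminal_V_bound} needs the dynamic-programming argument, not mere feasibility of the tail.
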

\change{The proof is detailed in Appendix~\ref{app:terminal_LP}.} 
Similar to Theorem~\ref{thm:main}, the LP~\eqref{eq:LP_terminal} computes  a worst-case feasible sequence for the decrease of the value function $V_N$, while additional taking the terminal cost $V_{\mathrm{f}}$ (Ass.~\ref{ass:term}) into account. 
Compared to the LP~\eqref{eq:LP}, the main difference is the fact that the terminal cost $\tilde{V}_{\mathrm{f}}$ appears in the cost function~\eqref{eq:LP_terminal_cost} and in the candidate solution~\eqref{eq:LP_terminal_Vf_next_bound} with the factor $\epsilon_{\mathrm{f}}$.

 \subsection{Analytical solution}
\label{sec:terminal_analytic} 
In the following, Theorems~\ref{thm:analytic_terminal_pdf} and \ref{thm:analytic_terminal_sigma_W} provide an analytical solution for the LP from Theorem~\ref{thm:terminal_LP} for the special cases that $\tilde{\sigma}=\tilde{\ell}$ and $\tilde{\sigma}=\tilde{W}$, respectively. 
%
\begin{theorem}
\label{thm:analytic_terminal_pdf}
Suppose $\underline{\gamma}_{\mathrm{o}}=\overline{\gamma}_{\mathrm{o}}=0$, $\epsilon_{\mathrm{o}}=1$.  
Then, the solution $\alpha_{N,\mathrm{f}}$ to the LP~\eqref{eq:LP_terminal} satisfies
\begin{align}
\label{eq:analytic_terminal_pdf}
&\alpha_{N,\mathrm{f}}\geq \hat{\alpha}_{N,\mathrm{f}}\\
:=&1-\dfrac{\epsilon_{\mathrm{f}}(\gamma_{N,\mathrm{f}}-1)\prod_{j=1}^{N-1}(\gamma_{N-j+1,\mathrm{f}}-1)}{(1+\epsilon_{\mathrm{f}})\prod_{j=1}^{N-1}\gamma_{N-j+1,\mathrm{f}}-\epsilon_{\mathrm{f}}\prod_{j=1}^{N-1}(\gamma_{N-j+1,\mathrm{f}}-1)}.\nonumber
\end{align}
Furthermore, $\hat{\alpha}_{N,\mathrm{f}}=\alpha_{N,\mathrm{f}}$ if for all $k,k_2\in\mathbb{I}_{\geq 0}$: 
\begin{subequations}
\label{eq:submult_pdf_term}
\begin{align}
\label{eq:submult_pdf_term_12}
\gamma_{k,\mathrm{f}}=\sum_{j=0}^{k-1}c_j+c_{k,\mathrm{f}},~ 
&c_{k+k_2}\leq c_{k}c_{k_2},~ 
c_{k+k_2,\mathrm{f}}\leq c_{k}c_{k_2,\mathrm{f}},\\
\label{eq:submult_pdf_term_3}
&c_{k}+c_{k+1,\mathrm{f}}\leq (1+\epsilon_{\mathrm{f}})c_{k,\mathrm{f}}.
\end{align}
\end{subequations}
In addition, we have $\alpha_{N,\mathrm{f}}>0$ for 
\begin{align}
\label{eq:hat_alpha_explicit_N_terminal_pdf}
\change{N>\underline{N}_{\mathrm{f}}:=1+\dfrac{\log(\overline{\gamma}_{\mathrm{f}})-\log(1+1/\epsilon_{\mathrm{f}})}{\log(\overline{\gamma}_{\mathrm{f}})-\log(\overline{\gamma}_{\mathrm{f}}-1)}.}
\end{align} 
\end{theorem}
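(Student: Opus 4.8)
The plan is to solve the LP \eqref{eq:LP_terminal} explicitly under the restriction $\tilde\sigma_k = \tilde\ell_k$ (which is exactly the regime $\underline\gamma_{\mathrm{o}}=\overline\gamma_{\mathrm{o}}=0$, $\epsilon_{\mathrm{o}}=1$, so that $\tilde W_k=0$ and constraint \eqref{eq:LP_terminal_W_decrease} forces $\tilde\sigma_k\le\tilde\ell_k$, while the cost pushes $\tilde\ell_k$ down, yielding $\tilde\sigma_k=\tilde\ell_k$ at optimality). First I would reduce \eqref{eq:LP_terminal} to a smaller LP in the variables $\tilde\ell_0,\dots,\tilde\ell_{N-1},\tilde V_{\mathrm{f}},\tilde V$ with $\tilde\ell_0=\tilde\sigma_0=1$: the active constraints become \eqref{eq:LP_terminal_V_bound} in the form $\sum_{j=k}^{N-1}\tilde\ell_j+\tilde V_{\mathrm{f}}\le\gamma_{N-k,\mathrm{f}}\tilde\ell_k$, the candidate bounds \eqref{eq:LP_terminal_V_next_bound}, \eqref{eq:LP_terminal_Vf_next_bound}, and the terminal bound \eqref{eq:LP_terminal_Vf_bound} $\underline c_{\mathrm{f}}\tilde\ell_N\le\tilde V_{\mathrm{f}}\le\overline c_{\mathrm{f}}\tilde\ell_N$ — note $\tilde\sigma_N$ still appears but only through this last constraint, so $\tilde V_{\mathrm{f}}$ can be treated as a free nonnegative variable decoupled from the $\tilde\ell_k$ once we drop $\tilde\sigma_N$ (its only role is to allow any $\tilde V_{\mathrm{f}}\ge0$). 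This is the same structure as in \cite{grune2010analysis} / Theorem~\ref{thm:analytic_grune} but with the extra variable $\tilde V_{\mathrm{f}}$ entering both the objective and the last candidate \eqref{eq:LP_terminal_Vf_next_bound}.

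Next I would guess and then verify the worst-case sequence. By analogy with the positive-definite case, the optimizer should take $\tilde\ell_k$ geometrically decaying and should make all the candidate constraints \eqref{eq:LP_terminal_V_next_bound} active simultaneously, as well as \eqref{eq:LP_terminal_Vf_next_bound}; the value of $\tilde V$ at optimum is the minimum over $k$ of the right-hand sides, and the worst case equalizes them. Concretely I expect $\tilde\ell_k = \prod_{i=1}^{k}\beta_i$ for suitable ratios $\beta_i\in(0,1)$ determined by forcing \eqref{eq:LP_terminal_V_bound} tight: from $\sum_{j=k}^{N-1}\tilde\ell_j+\tilde V_{\mathrm{f}}=\gamma_{N-k,\mathrm{f}}\tilde\ell_k$ one gets a backward recursion for the ratios $\tilde\ell_{k+1}/\tilde\ell_k$, exactly mirroring the derivation of \eqref{eq:hat_alpha_explicit_grune} but shifted because the tail now carries the terminal weight $\tilde V_{\mathrm{f}}$. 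Substituting this candidate into the objective $\sum_{k=1}^{N-1}\tilde\ell_k+\tilde V_{\mathrm{f}}-\tilde V$ and simplifying the resulting telescoping products should produce the closed form \eqref{eq:analytic_terminal_pdf}; the factor $\epsilon_{\mathrm{f}}$ in the numerator comes precisely from \eqref{eq:LP_terminal_Vf_next_bound}, where $\tilde V$ sees $(1+\epsilon_{\mathrm{f}})\tilde V_{\mathrm{f}}$ rather than $\tilde V_{\mathrm{f}}$, so the "extra" slack is $\epsilon_{\mathrm{f}}\tilde V_{\mathrm{f}}$. For the lower bound $\alpha_{N,\mathrm{f}}\ge\hat\alpha_{N,\mathrm{f}}$ it suffices to exhibit one feasible point of the restricted LP; for the claimed tightness $\hat\alpha_{N,\mathrm{f}}=\alpha_{N,\mathrm{f}}$ under \eqref{eq:submult_pdf_term} I would construct a matching dual-feasible solution (or, following \cite{grune2010analysis}, directly construct a primal sequence of the full LP \eqref{eq:LP_terminal} — i.e., a bona fide trajectory with $\gamma_{k,\mathrm{f}}=\sum_{j=0}^{k-1}c_j+c_{k,\mathrm{f}}$ — attaining the bound), where the submultiplicativity conditions \eqref{eq:submult_pdf_term_12} and the compatibility condition \eqref{eq:submult_pdf_term_3} are exactly what is needed for the constructed $\gamma$-sequence to satisfy all constraints of \eqref{eq:LP_terminal} with equality along the worst-case trajectory.

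For the horizon bound \eqref{eq:hat_alpha_explicit_N_terminal_pdf}, I would specialize \eqref{eq:analytic_terminal_pdf} to the constant case $\gamma_{k,\mathrm{f}}=\overline\gamma_{\mathrm{f}}$ (the monotonicity in $\gamma_k$ reduces the general case to this one, as in the proof of Theorem~\ref{thm:analytic_sigma_W}), so that $1-\hat\alpha_{N,\mathrm{f}}$ becomes $\frac{\epsilon_{\mathrm{f}}(\overline\gamma_{\mathrm{f}}-1)^N}{(1+\epsilon_{\mathrm{f}})\overline\gamma_{\mathrm{f}}^{N-1}-\epsilon_{\mathrm{f}}(\overline\gamma_{\mathrm{f}}-1)^{N-1}}$ (up to reindexing), and then solve $\hat\alpha_{N,\mathrm{f}}>0$ for $N$: the denominator is positive and the condition $1-\hat\alpha_{N,\mathrm{f}}<1$ rearranges to $\epsilon_{\mathrm{f}}(\overline\gamma_{\mathrm{f}}-1)^{N}<(1+\epsilon_{\mathrm{f}})\overline\gamma_{\mathrm{f}}^{N-1}-\epsilon_{\mathrm{f}}(\overline\gamma_{\mathrm{f}}-1)^{N-1}$, equivalently $\overline\gamma_{\mathrm{f}}(\overline\gamma_{\mathrm{f}}-1)^{N-1}<(1+1/\epsilon_{\mathrm{f}})\overline\gamma_{\mathrm{f}}^{N-1}$, and taking logarithms gives $(N-1)(\log\overline\gamma_{\mathrm{f}}-\log(\overline\gamma_{\mathrm{f}}-1))>\log\overline\gamma_{\mathrm{f}}-\log(1+1/\epsilon_{\mathrm{f}})$, which is \eqref{eq:hat_alpha_explicit_N_terminal_pdf}.

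The main obstacle I anticipate is not the algebra of the closed form but the \emph{verification} that the guessed geometric worst-case sequence is actually optimal for the full LP under the submultiplicativity/compatibility conditions \eqref{eq:submult_pdf_term} — i.e., checking feasibility of \emph{all} of \eqref{eq:LP_terminal_V_bound}--\eqref{eq:LP_terminal_Vf_bound}, not just the ones made tight, and confirming that no other choice of which candidate index $k$ in \eqref{eq:LP_terminal_V_next_bound} is binding can do better. This is the step where conditions \eqref{eq:submult_pdf_term_12}--\eqref{eq:submult_pdf_term_3} must be used in full, and it parallels (but is genuinely more intricate than) the corresponding argument in \cite[Thm.~5.4]{grune2010analysis} because of the coupling introduced by $\tilde V_{\mathrm{f}}$ and $\epsilon_{\mathrm{f}}$; I would handle it by the same LP-duality bookkeeping used there, extended to the one extra primal variable and its two extra constraints \eqref{eq:LP_terminal_Vf_next_bound}--\eqref{eq:LP_terminal_Vf_bound}.
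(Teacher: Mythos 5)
There is a genuine gap, and it concerns the logical direction of the bound $\alpha_{N,\mathrm{f}}\geq\hat{\alpha}_{N,\mathrm{f}}$. Since $\alpha_{N,\mathrm{f}}-1$ \emph{is} the minimum of the LP~\eqref{eq:LP_terminal}, exhibiting a feasible point (or restricting attention to sequences of a guessed geometric form) can only give an \emph{upper} bound on the minimum, i.e., $\alpha_{N,\mathrm{f}}\leq\hat{\alpha}_{N,\mathrm{f}}$ — the opposite of what the first claim of the theorem requires, and the inequality in the theorem must hold \emph{without} the submultiplicativity conditions~\eqref{eq:submult_pdf_term}, under which your candidate sequence need not even be feasible. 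The paper obtains the lower bound the other way around: it \emph{drops} constraints (the bounds~\eqref{eq:LP_terminal_V_bound} for $k\in\mathbb{I}_{[1,N-1]}$, the $k=N$ instance of~\eqref{eq:LP_terminal_V_next_bound}, \eqref{eq:LP_terminal_Vf_bound}, and the nonnegativity constraints) to get a relaxed LP whose minimum can only be smaller, and then solves that relaxed LP \emph{exactly}, which is the bulk of the work (Parts I--IV of Appendix~\ref{app:analytic_terminal_pdf}: exchange arguments showing which constraints are active at the optimum, Lemma~\ref{lemma:sum_nonnegative_app}, and the product identity from \cite[Lemma~10.2]{grune2010analysis}). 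Your plan mentions a dual-feasible certificate, which could in principle supply the missing lower bound, but you assign it to the tightness claim and do not develop it; as written, the unconditional inequality is not established. The equality under~\eqref{eq:submult_pdf_term} is then exactly the step you describe (verify the relaxed minimizer is feasible for the full LP, via Lemma~\ref{lemma:submult_induction_pdf}), so that part of your outline matches the paper, as does the horizon bound: specialize to $\gamma_{k,\mathrm{f}}=\overline{\gamma}_{\mathrm{f}}$ (justified by monotonicity of the LP value in the $\gamma$'s) and take logarithms.

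A second, more technical discrepancy: you propose to generate the worst-case sequence by making the cost-controllability constraints~\eqref{eq:LP_terminal_V_bound} tight for all $k$, which yields the recursion $\tilde{\ell}_{k+1}=\frac{\gamma_{N-k,\mathrm{f}}-1}{\gamma_{N-k-1,\mathrm{f}}}\tilde{\ell}_k$. The actual optimizer of the relaxed LP instead makes the candidate-value constraints~\eqref{eq:LP_terminal_V_next_bound} tight for $k\in\mathbb{I}_{[1,N-1]}$ (together with the $k=0$ controllability bound and~\eqref{eq:LP_terminal_Vf_next_bound}), giving $\overline{\ell}_{k+1}=\frac{\gamma_{N-k+1,\mathrm{f}}-1}{\gamma_{N-k,\mathrm{f}}}\overline{\ell}_k$; the two coincide only for constant $\gamma_{k,\mathrm{f}}$, and only the latter reproduces the index pattern $\gamma_{2,\mathrm{f}},\dots,\gamma_{N,\mathrm{f}}$ appearing in~\eqref{eq:analytic_terminal_pdf} for horizon-dependent bounds. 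So both the choice of active set and, more fundamentally, the mechanism by which the closed form becomes a valid lower bound need to be corrected along the lines of the paper's relaxation-and-exact-solution argument.
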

\change{The proof is detailed in Appendix~\ref{app:analytic_terminal_pdf}.} 
Regarding the conditions~\eqref{eq:submult_pdf_term} to ensure $\hat{\alpha}_{N,\mathrm{f}}=\alpha_{N,\mathrm{f}}$: Inequalities~\eqref{eq:submult_pdf_term_12} correspond to the  the submultiplicativity condition in~\cite[Equ. (3.3)]{grune2010analysis} (which holds, e.g., if $c_k,c_{k,\mathrm{f}}$ are exponentially decaying in $k$), while Inequality~\eqref{eq:submult_pdf_term_3} reflects that these bounds also satisfy Inequality~\eqref{eq:term_2}.

%
\begin{theorem}
\label{thm:analytic_terminal_sigma_W}
Suppose $\underline{\gamma}_{\mathrm{o}}=\overline{\gamma}_{\mathrm{o}}=1$, and $\epsilon_{\mathrm{o}}\in(0,1)$. 
Then, the solution $\alpha_{N,\mathrm{f}}$ to the LP~\eqref{eq:LP_terminal} satisfies
\begin{align}
\label{eq:analytic_terminal_sigma_W}
&(1-\alpha_{N,\mathrm{f}})\epsilon_{\mathrm{o}}\leq(1- \hat{\alpha}_{N,\mathrm{f}})\epsilon_{\mathrm{o}}\\
:=&\dfrac{\epsilon_{\mathrm{f}}\gamma_{1,\mathrm{f}}(\gamma_{N,\mathrm{f}}+\eta)
\prod_{j=0}^{N-2}(\eta+\gamma_{N-j,\mathrm{f}})}{(1+\epsilon_{\mathrm{f}})\prod_{j=0}^{N-1}(1+\gamma_{N-j,\mathrm{f}})
-\epsilon_{\mathrm{f}}\gamma_{1,\mathrm{f}}\prod_{j=0}^{N-2}(\eta+\gamma_{N-j,\mathrm{f}})}\nonumber
\end{align}
and $\alpha_{N,\mathrm{f}}>0$ holds if
\begin{align}
\label{eq:hat_alpha_explicit_N_terminal_detect}
N>\underline{N}_{\mathrm{f}}=
\dfrac{\log(\overline{\gamma}_{\mathrm{f}})
 -\log(\epsilon_{\mathrm{o}})-\log(1+{1}/{\epsilon_{\mathrm{f}}})}{\log(1+\overline{\gamma}_{\mathrm{f}})-\log(\overline{\gamma}_{\mathrm{f}}+\eta)}.
\end{align} 
If additionally $\gamma_{k,\mathrm{f}}=\overline{\gamma}_{\mathrm{f}}$, $\forall k\in\mathbb{I}_{\geq 1}$, then $\hat{\alpha}_{N,\mathrm{f}}=\alpha_{N,\mathrm{f}}$.  
\end{theorem}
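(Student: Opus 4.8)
The plan is to specialize the LP~\eqref{eq:LP_terminal} to the case $\underline{\gamma}_{\mathrm{o}}=\overline{\gamma}_{\mathrm{o}}=1$, where~\eqref{eq:LP_terminal_W_bound} forces $\tilde W_k=\tilde\sigma_k$, so that~\eqref{eq:LP_terminal_W_decrease} becomes the scalar recursion $\tilde\sigma_{k+1}\le\eta\tilde\sigma_k+\tilde\ell_k$ with $\eta=1-\epsilon_{\mathrm{o}}\in(0,1)$. This mirrors exactly the reduction used for Theorem~\ref{thm:analytic_sigma_W}, now carrying the extra decision variable $\tilde V_{\mathrm{f}}$ and the extra constraints~\eqref{eq:LP_terminal_Vf_next_bound}--\eqref{eq:LP_terminal_Vf_bound}. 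First I would argue, as in Appendix~\ref{app:analytic_terminal_sigma_W} (which I may invoke), that at the optimum the controllability constraint~\eqref{eq:LP_terminal_V_bound} and the candidate bound~\eqref{eq:LP_terminal_V_next_bound} are active with the constant horizon value $\gamma_{k,\mathrm{f}}=\overline\gamma_{\mathrm{f}}$ (this is where the hypothesis $\gamma_{k,\mathrm{f}}\equiv\overline\gamma_{\mathrm{f}}$ enters to get equality rather than just an upper bound), and that $\tilde V$ is determined by the binding constraint among~\eqref{eq:LP_terminal_V_next_bound} and~\eqref{eq:LP_terminal_Vf_next_bound}. This turns the LP into an optimization over the scalar sequence $(\tilde\sigma_k)$ alone.

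Next I would solve the reduced problem explicitly. Using $\tilde\ell_k\ge\tilde\sigma_{k+1}-\eta\tilde\sigma_k$ (tight at the optimum, since increasing $\tilde\ell_k$ only hurts the objective $\sum\tilde\ell_k+\tilde V_{\mathrm{f}}-\tilde V$ while relaxing~\eqref{eq:LP_terminal_W_decrease}), the objective and the constraints~\eqref{eq:LP_terminal_V_bound} with $k=0$, namely $\sum_{j=0}^{N-1}\tilde\ell_j+\tilde V_{\mathrm{f}}\le\overline\gamma_{\mathrm{f}}$, collapse to linear relations in $\tilde\sigma_1,\dots,\tilde\sigma_N$ and $\tilde V_{\mathrm{f}}=\overline c_{\mathrm{f}}\tilde\sigma_N$ with $\overline c_{\mathrm{f}}=\gamma_{1,\mathrm{f}}/(1+\epsilon_{\mathrm{f}})$ by~\eqref{eq:term_3}. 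The worst case is obtained by pushing $\tilde\sigma_k$ toward a geometric-type profile; a clean way is to parametrize $\tilde\sigma_k$ via the ratios $r_k:=\tilde\sigma_k/\tilde\sigma_{k-1}$ and observe that the binding constraints~\eqref{eq:LP_terminal_V_next_bound} at each $k$ force $\tilde\sigma_k$ downward, giving, after telescoping, the product $\prod_{j=0}^{N-2}(\eta+\gamma_{N-j,\mathrm{f}})$ in the numerator and $\prod_{j=0}^{N-1}(1+\gamma_{N-j,\mathrm{f}})$ (up to the $\epsilon_{\mathrm{f}}$-weighted correction from~\eqref{eq:LP_terminal_Vf_next_bound}) in the denominator of~\eqref{eq:analytic_terminal_sigma_W}. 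I would verify the formula by checking the two sanity limits: $\epsilon_{\mathrm{f}}\to 0$ gives $\hat\alpha_{N,\mathrm{f}}=1$ (CLF terminal cost, consistent with Theorem~\ref{thm:grimm_terminal}), and $\epsilon_{\mathrm{f}}\to\infty$ with $\gamma_{k,\mathrm{f}}=\gamma_k$ recovers~\eqref{eq:hat_alpha_explicit} from Theorem~\ref{thm:analytic_sigma_W}.

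For the horizon bound~\eqref{eq:hat_alpha_explicit_N_terminal_detect}, I would simply require the denominator in~\eqref{eq:analytic_terminal_sigma_W} to be positive (so that $(1-\hat\alpha_{N,\mathrm{f}})\epsilon_{\mathrm{o}}<\epsilon_{\mathrm{o}}$, i.e. $\hat\alpha_{N,\mathrm{f}}>0$): substituting the worst-case constant bound $\gamma_{k,\mathrm{f}}=\overline\gamma_{\mathrm{f}}$, the condition $(1+\epsilon_{\mathrm{f}})(1+\overline\gamma_{\mathrm{f}})^N>\epsilon_{\mathrm{f}}\overline\gamma_{\mathrm{f}}(\eta+\overline\gamma_{\mathrm{f}})^{N-1}\cdot(\text{const})$ — keeping track of the stray $\gamma_{1,\mathrm{f}}$ and $(\gamma_{N,\mathrm{f}}+\eta)$ factors and the $\epsilon_{\mathrm{o}}$ — rearranges, after taking logarithms, into $N>\underline N_{\mathrm{f}}$. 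Since $\hat\alpha_{N,\mathrm{f}}\le\alpha_{N,\mathrm{f}}$, this also certifies $\alpha_{N,\mathrm{f}}>0$. The main obstacle I anticipate is the bookkeeping in the second step: correctly identifying \emph{which} of the $N$ candidate constraints~\eqref{eq:LP_terminal_V_next_bound} (plus the single terminal candidate~\eqref{eq:LP_terminal_Vf_next_bound}) is active at the optimum for each index, and handling the coupling that the terminal variable $\tilde V_{\mathrm{f}}$ introduces between the ``tail'' of the sequence and the objective — this is precisely the new feature absent in Theorem~\ref{thm:analytic_sigma_W}, and getting the $\epsilon_{\mathrm{f}}$-weighting in the denominator exactly right (rather than off by a factor involving $1+\epsilon_{\mathrm{f}}$) is the delicate point. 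The equality claim under $\gamma_{k,\mathrm{f}}\equiv\overline\gamma_{\mathrm{f}}$ then follows because in that case the constructed worst-case sequence is actually feasible for the full LP, so the upper bound is attained.
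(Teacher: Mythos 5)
Your outline follows the same broad strategy as the paper (set $\tilde W_k=\tilde\sigma_k$, identify active constraints, solve in closed form, then verify feasibility to get equality), but the heart of the proof is missing and two structural points are off. First, the place where the hypothesis $\gamma_{k,\mathrm{f}}\equiv\overline{\gamma}_{\mathrm{f}}$ enters is misidentified: the formula for $\hat{\alpha}_{N,\mathrm{f}}$ in \eqref{eq:analytic_terminal_sigma_W} must be derived for \emph{general} horizon-dependent $\gamma_{k,\mathrm{f}}$ (otherwise the inequality $(1-\alpha_{N,\mathrm{f}})\epsilon_{\mathrm{o}}\leq(1-\hat{\alpha}_{N,\mathrm{f}})\epsilon_{\mathrm{o}}$ with the stated products is not established). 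The paper obtains it as the exact value of a \emph{relaxed} LP in which \eqref{eq:LP_terminal_V_bound} for $k\in\mathbb{I}_{[1,N-1]}$, \eqref{eq:LP_terminal_Vf_bound} and the non-negativity constraints are dropped; constant $\gamma_{k,\mathrm{f}}$ is used only afterwards, to check that the relaxed minimizer is feasible for the full LP (hence $\alpha_{N,\mathrm{f}}=\hat{\alpha}_{N,\mathrm{f}}$). Relatedly, your claim $\tilde V_{\mathrm{f}}=\overline{c}_{\mathrm{f}}\tilde\sigma_N$ with $\overline{c}_{\mathrm{f}}=\gamma_{1,\mathrm{f}}/(1+\epsilon_{\mathrm{f}})$ misreads \eqref{eq:term_3}, which is only a sandwich condition: in the correct argument the ratio $\tilde V_{\mathrm{f}}^*/\tilde\sigma_N^*=\gamma_{1,\mathrm{f}}/(1+\epsilon_{\mathrm{f}})$ emerges from the \emph{two} active candidate constraints (\eqref{eq:LP_terminal_V_next_bound} at $k=N$ and \eqref{eq:LP_terminal_Vf_next_bound}), and \eqref{eq:term_3} is invoked only to confirm that the dropped bound \eqref{eq:LP_terminal_Vf_bound} is then satisfied.

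Second, the central analytical content is not carried out, and you yourself flag it as the obstacle: one must prove, via separate perturbation/exchange arguments, that \eqref{eq:LP_terminal_W_decrease}, \eqref{eq:LP_terminal_Vf_next_bound}, \eqref{eq:LP_terminal_V_bound} at $k=0$ and \eqref{eq:LP_terminal_V_next_bound} at $k=1$ are active, and that \emph{some} minimizer makes all candidate constraints \eqref{eq:LP_terminal_V_next_bound} active (the paper needs a dedicated comparison lemma, Lemma~\ref{lemma:sum_nonnegative_app}, because this holds only for a suitably chosen minimizer); then the telescoping you gesture at requires the nontrivial induction identities of Lemmas~\ref{lemma:induction_formula_app}--\ref{lemma:analytic_ab_app} to produce the products in \eqref{eq:analytic_terminal_sigma_W}, and the equality claim requires explicitly verifying non-negativity of $\tilde\ell_k$, $\tilde V_{\mathrm{f}}$ and the dropped constraints \eqref{eq:LP_terminal_V_bound} for $k\in\mathbb{I}_{[1,N-1]}$, which is another computation rather than an assertion. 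Finally, your justification of \eqref{eq:hat_alpha_explicit_N_terminal_detect} for non-constant $\gamma_{k,\mathrm{f}}$ is incomplete: "substituting the worst-case constant bound" presumes a monotonicity of the closed-form expression in the $\gamma_{k,\mathrm{f}}$ that you do not prove; the paper instead argues at the LP level that replacing $\overline{\gamma}_{\mathrm{f}}$ by $\gamma_{k,\mathrm{f}}\leq\overline{\gamma}_{\mathrm{f}}$ tightens the constraints and can only increase $\alpha_{N,\mathrm{f}}$, so positivity established for the constant case transfers. Your sanity checks ($\epsilon_{\mathrm{f}}\to 0$ and $\epsilon_{\mathrm{f}}\to\infty$) are sensible but do not substitute for these steps.
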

\change{The proof is detailed in Appendix~\ref{app:analytic_terminal_sigma_W}.} 
In case we have a very  bad terminal cost ($\epsilon_{\mathrm{f}}\gg 1$), the derived bounds in Theorem~\ref{thm:analytic_terminal_pdf} and
\ref{thm:analytic_terminal_sigma_W} essentially recover the results without terminal cost in Theorem~\ref{thm:analytic_grune} and
\ref{thm:analytic_sigma_W}, respectively, assuming $\gamma_k\approx\gamma_{k,\mathrm{f}}$. 
Furthermore, if the terminal cost is a close approximation to a CLF ($\epsilon_{\mathrm{f}}$ small), we can ensure stability with a significantly shorter horizon $N$.

\section{Quadratic input-output costs}
 \label{sec:design}
In this section, we discuss the special case, in which $\ell$ is a positive definite function with respect to an output $y=h(x)$. 
\begin{assumption}(Stabilizability and detectability) There exist constants $C_0\geq 0$, $C_1\geq 1$, $\rho\in[0,1)$, $c_{\mathrm{o}}>0$, 
 such that the following conditions hold for any $(x,u)\in X\times\mathbb{U}$: 
\label{ass:exp_IO}
\begin{enumerate}[label=\alph*)]
\item Quadratic input-output stage cost: 
$\ell(x,u)=\|h(x)\|_Q^2+\|u\|_R^2$ with $h(0)=0$, $f(0,0)=0$, $Q,R$ positive definite and $h$ Lipschitz continous.
\label{ass:exp_IO_a}
\item Quadratically bounded terminal cost: 
$V_{\mathrm{f}}(x)\leq C_0\|x\|^2$. 
\label{ass:exp_IO_b}
\item Exponential stabilizability: 
For any $x\in X$, there exists an input trajectory $u\in\mathbb{U}^\infty$ satisfying $\|x_u(k,x)\|^2+\|u(k)\|^2\leq C_1\rho^k\|x\|^2$, $k\in\mathbb{I}_{\geq 0}$. 
\label{ass:exp_IO_c}
\item Exponential input-output-to-state stability (IOSS): There exists a quadratically lower and upper bounded IOSS Lyapunov function $\tilde{W}:X\rightarrow\mathbb{R}_{\geq 0}$ satisfying 
 $\tilde{W}(f(x,u))-\tilde{W}(x)\leq -\|x\|^2+c_{\mathrm{o}}(\|u\|^2+\|h(x)\|^2)$.
\label{ass:exp_IO_d}
\end{enumerate}
\end{assumption}
\begin{proposition}
\label{prop:exp_IO}
Let Assumption~\ref{ass:exp_IO} hold.
Then, Assumptions~\ref{ass:set}, \ref{ass:detect}, and  \ref{ass:stab_term} hold with  $\mathcal{A}=\{0\}$, $\sigma(x)=\|x\|^2$, $W(x)=\epsilon_{\mathrm{o}}\tilde{W}(x)$, $\gamma_{k,\mathrm{f}}=C_2\dfrac{1-\rho^k}{1-\rho}+C_0C_1\rho^k$, $\alpha_1(r)=\alpha_2(r)=r^2$, and apropriate constants $\epsilon_{\mathrm{o}},C_2>0$. 
\end{proposition}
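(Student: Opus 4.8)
The plan is to verify each of the three target assumptions in turn, translating the quadratic/exponential bounds in Assumption~\ref{ass:exp_IO} into the linear bounds required by Assumptions~\ref{ass:set}, \ref{ass:detect}, and \ref{ass:stab_term}. The key observation is that with $\mathcal{A}=\{0\}$, $\sigma(x)=\|x\|^2$, and $\alpha_1(r)=\alpha_2(r)=r^2$, Assumption~\ref{ass:set} holds trivially since $\|x\|_{\mathcal{A}}=\|x\|$. Everything else is bookkeeping with the constants.

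For Assumption~\ref{ass:stab_term}, I would take the exponentially stabilizing input sequence $u$ from Assumption~\ref{ass:exp_IO_c} and bound the finite-horizon cost with terminal penalty directly. Using Lipschitz continuity of $h$ and $h(0)=0$, we have $\|h(x)\|^2\le L_h^2\|x\|^2$ for some $L_h$, so $\ell(x_u(k,x),u(k))=\|h(x_u(k,x))\|_Q^2+\|u(k)\|_R^2\le \bar{c}(\|x_u(k,x)\|^2+\|u(k)\|^2)\le \bar{c}C_1\rho^k\|x\|^2$ where $\bar c$ absorbs $\lambda_{\max}(Q)L_h^2$ and $\lambda_{\max}(R)$. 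Summing the stage costs over $k\in\mathbb{I}_{[0,N-1]}$ gives a geometric series bounded by $\bar{c}C_1\frac{1-\rho^N}{1-\rho}\|x\|^2$; the terminal term is bounded via Assumption~\ref{ass:exp_IO_b} by $C_0\|x_u(N,x)\|^2\le C_0C_1\rho^N\|x\|^2$. Setting $C_2:=\bar{c}C_1$ (or absorbing constants suitably so that the stated formula $\gamma_{k,\mathrm{f}}=C_2\frac{1-\rho^k}{1-\rho}+C_0C_1\rho^k$ comes out), this proves $V_{k,\mathrm{f}}(x)\le\gamma_{k,\mathrm{f}}\|x\|^2$, and $\bar\gamma_{\mathrm{f}}$ exists since $\gamma_{k,\mathrm{f}}$ is bounded in $k$ (as $\rho<1$).

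For Assumption~\ref{ass:detect}, I would set $W(x)=\epsilon_{\mathrm{o}}\tilde W(x)$ with $\epsilon_{\mathrm{o}}$ to be chosen, and use the IOSS inequality from Assumption~\ref{ass:exp_IO_d}. Multiplying that inequality by $\epsilon_{\mathrm{o}}$ yields $W(f(x,u))-W(x)\le -\epsilon_{\mathrm{o}}\|x\|^2+\epsilon_{\mathrm{o}}c_{\mathrm{o}}(\|u\|^2+\|h(x)\|^2)$. To match \eqref{eq:detect_2}, which requires the last term to be $\le\ell(x,u)=\|h(x)\|_Q^2+\|u\|_R^2$, I would choose $\epsilon_{\mathrm{o}}>0$ small enough that $\epsilon_{\mathrm{o}}c_{\mathrm{o}}\le\lambda_{\min}(R)$ and $\epsilon_{\mathrm{o}}c_{\mathrm{o}}\le\lambda_{\min}(Q)$, so that $\epsilon_{\mathrm{o}}c_{\mathrm{o}}(\|u\|^2+\|h(x)\|^2)\le\|u\|_R^2+\|h(x)\|_Q^2=\ell(x,u)$. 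The bound \eqref{eq:detect_1} follows from the assumed quadratic upper and lower bounds on $\tilde W$, scaled by $\epsilon_{\mathrm{o}}$, giving $\underline\gamma_{\mathrm{o}},\overline\gamma_{\mathrm{o}}\ge 0$.

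The main obstacle — really the only delicate point — is the bookkeeping needed to make the stated closed-form $\gamma_{k,\mathrm{f}}=C_2\frac{1-\rho^k}{1-\rho}+C_0C_1\rho^k$ come out exactly, since the naive summation produces $\bar c C_1\frac{1-\rho^k}{1-\rho}$ rather than $C_2\frac{1-\rho^k}{1-\rho}$ with a clean $C_2$; one simply defines $C_2$ as the relevant product of $C_1$ with $\max\{\lambda_{\max}(Q)L_h^2,\lambda_{\max}(R)\}$. A second minor subtlety is that Assumption~\ref{ass:exp_IO_c} only asserts existence of the stabilizing sequence in $\mathbb{U}^\infty$, so truncating it to the first $N$ entries is admissible for $\mathcal{J}_{N,\mathrm{f}}$; and one must check $f(0,0)=0$, $h(0)=0$ so that $\sigma$ is indeed positive definite w.r.t.\ $\mathcal{A}=\{0\}$ and the cost vanishes there. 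None of this requires more than elementary estimates, so the proof is short once the constants are named.
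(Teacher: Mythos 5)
Your proposal is correct and follows essentially the same route as the paper, which simply notes that Assumption~\ref{ass:set} is trivial, obtains Assumption~\ref{ass:stab_term} from the quadratic bound on $\ell$, the exponential decay $\rho\in[0,1)$, and the geometric series (citing \cite[Prop.~2]{Koehler2020Regulation}), and obtains Assumption~\ref{ass:detect} by scaling the IOSS Lyapunov function with a sufficiently small $\epsilon_{\mathrm{o}}$ (citing \cite[Prop.~3]{Koehler2020Regulation}). You merely spell out the bookkeeping (Lipschitz bound on $h$, choice $\epsilon_{\mathrm{o}}c_{\mathrm{o}}\leq\min\{\lambda_{\min}(Q),\lambda_{\min}(R)\}$, definition of $C_2$) that the paper delegates to the cited reference.
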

\change{The proof is detailed in Appendix~\ref{app:exp_IO}.} 
In case of quadratic input output stage costs $\ell$, 
this proposition ensures that the overall conditions are (exponential) stabilizability and zero-state detectability/IOSS~\cite[Def.~3.7]{cai2008input}, which reduce
to stabilizability and detectability in case of linear systems.

\begin{remark}(Input-output models)
\label{rk:IO_model}
An important special case are nonlinear autoregressive models with exogenous inputs (NARX), in which case a simple (non-minimal) state is given by $x(t)=(Q^{1/2}y(t-\nu),\dots,Q^{1/2}y(t-1),R^{1/2}u(t-\nu),\dots,R^{1/2}u(t-1))$, where $\nu\in\mathbb{I}_{\geq 1}$ corresponds to the lag/observability-index. 
Such systems are intrinsically final-state observable (cf.~\cite[Def.~4.29]{rawlings2017model}). 
Thus, in case of quadratic input-output stage costs (cf. Ass.~\ref{ass:exp_IO}\ref{ass:exp_IO_a}), Assumption~\ref{ass:detect} holds with $\epsilon_{\mathrm{o}}=1/\nu$ and the following quadratic IOSS Lyapunov function
\begin{align*}
W(x(t))=&\|x(t)\|_{P_{\mathrm{o}}}^2\\
:=&\sum_{k=1}^\nu \frac{\nu+1-k}{\nu}\left(\|y(t-k)\|_Q^2+\|u(t-k)\|_R^2\right).
\end{align*}
\end{remark}

\begin{remark}(Large enough input regularization)
\label{rk:stable_R_large}
For open-loop asymptotically stable systems, one may intuitively conjecture that a large enough input regularization ensures asymptotic stability. 
Theorem~\ref{thm:analytic_sigma_W} quantifiably confirms this hypothesis. 
In particular, suppose we have an exponential input-to-state stable (ISS) system, i.e., there exists an ISS Lyapunov function $W(x)$ and an exponential decay rate $\eta\in[0,1)$, such that $W(f(x,u))\leq \eta W(x)+\|u\|^2$ for any $(x,u)\in X\times\mathbb{U}$. 
Given the stage cost $\ell(x,u)=\|h(x)\|_Q^2+r\|u\|^2$, $r>0$, with $h$ Lipschitz continuous, Assumption~\ref{ass:stab} holds with $\sigma=W$ and a uniform constant $\overline{\gamma}>0$ independent of $r$. 
Thus, by scaling $\tilde{W}(x)=\tilde{\sigma}(x)=r\cdot W(x)$, Assumptions~\ref{ass:stab} and \ref{ass:detect} hold with $\epsilon_{\mathrm{o}}=1-\eta>0$ and $\tilde{\overline{\gamma}}=\overline{\gamma}/r$ for all $r>0$. 
Hence, by choosing the input regularization $r> \overline{\gamma}/\epsilon_{\mathrm{o}}$, Theorem~\ref{thm:analytic_sigma_W} ensures asymptotic stability with any prediction horizon $N\geq1$. 
To the best knowledge of the authors, such quantitative design results are not available in the existing literature and can, in particular, not be concluded using the methods in~\cite{grune2009analysis,grune2010analysis}. 
\end{remark}

\begin{remark}
\label{rk:output_cost}
(Output cost)
In many applications, we may be interested in pure output stage costs $\ell(x,u)=\|h(x)\|_Q^2$, e.g., if only a desired output setpoint/trajectory is specified without a corresponding input (cf.~\cite{limon2018nonlinear,ferramosca2010mpc,Koehler2020Regulation}). 
However,  Assumption~\ref{ass:detect} is in general not valid for such stage costs, even if the output is observable (cf.~\cite[Example~1]{hoger2019relation}). 
In the following, we briefly discuss how the theoretical results in this paper can be applied to such output stage costs. 
Consider for simplicity a single-input-single-output system with a well-defined relative degree $d$ and suppose that the zero dynamics are exponentially stable.
Then, the stage cost $\tilde{\ell}(x,u)=\|h(x)\|_Q^2+\|h(x_u(d+1,x))\|_Q^2$ is cost-detectable, i.e., satisfies Assumption~\ref{ass:detect} with $\sigma(x)=\|x\|^2$ (cf.~\cite[Prop.~4]{Koehler2020Regulation}). 
Hence, if the detectable output stage cost $\tilde{\ell}$ is used, asymptotic stability can be directly analysed based on the derived results. 
Furthermore, one can show that the MPC scheme with the output cost $\ell$ is equivalent to an MPC scheme with stage cost $\tilde{\ell}$, horizon $N-d-1$, and some non-negative terminal cost (cf. \cite[Thm~2]{Koehler2020Regulation}).  
Consequently, the stability of MPC formulations with the singular output costs $\ell$ can also be analysed using the results for general terminal costs (Sec.~\ref{sec:terminal}). 
\end{remark}

\begin{remark}(Non-quadratic stage costs)
\label{rk:non_quad}
In case the system is \textit{not} exponentially stabilizable, the considered assumptions cannot be satisfied with a quadratically bounded state measure $\sigma$.  
Instead, a non-quadratic state measure $\sigma$ can, e.g., be found using the concept of homogeneity~\cite{tuna2006shorter,grimm2005model,coron2020model}. 
Consequently, this case typically necessitates the usage of a non-quadratic stage cost~\cite{worthmann2015model,muller2017quadratic}.  
\end{remark} 

\section{Terminal cost designs}
\label{sec:term_design}
In the following, we show how Assumption~\ref{ass:term} can be constructively satisfied with the simple terminal \change{cost} 
 from~\cite{grune2009analysis,grune2010analysis} or the finite-tail cost from~\cite{magni2001stabilizing,kohler2021stability}.
\begin{proposition}
\label{prop:simple_penalty}
Consider $V_{\mathrm{f}}(x)=\omega \sigma(x)$ with some $\omega>0$ and let Assumption~\ref{ass:stab_term} hold. 
Then, Assumption~\ref{ass:term} holds with $\underline{c}_{\mathrm{f}}=\overline{c}_{\mathrm{f}}=\omega$, $1+\epsilon_{\mathrm{f}}={\gamma_{1,\mathrm{f}}}/{\omega}$. 
\end{proposition}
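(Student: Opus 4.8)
The plan is to verify the three defining inequalities~\eqref{eq:term} of Assumption~\ref{ass:term} directly, reading off the constants from the ansatz $V_{\mathrm{f}}=\omega\sigma$ and invoking Assumption~\ref{ass:stab_term} only for the relaxed CLF condition~\eqref{eq:term_2}. First I would dispatch the boundedness condition~\eqref{eq:term_1}: since $V_{\mathrm{f}}(x)=\omega\sigma(x)$ holds with equality and $\sigma(x)\ge 0$, the sandwich $\underline{c}_{\mathrm{f}}\sigma(x)\le V_{\mathrm{f}}(x)\le\overline{c}_{\mathrm{f}}\sigma(x)$ is satisfied (in fact with equality) by the choice $\underline{c}_{\mathrm{f}}=\overline{c}_{\mathrm{f}}=\omega$.

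The one substantive step is~\eqref{eq:term_2}. Here I would note that the left-hand side $\min_{u\in\mathbb{U}}V_{\mathrm{f}}(f(x,u))+\ell(x,u)$ is, by the definition of the terminal-cost cost functional $\mathcal{J}_{N,\mathrm{f}}$, exactly the horizon-one value function $V_{1,\mathrm{f}}(x)$. Assumption~\ref{ass:stab_term} applied with $k=1$ then yields $V_{1,\mathrm{f}}(x)\le\gamma_{1,\mathrm{f}}\sigma(x)$. Rewriting $\gamma_{1,\mathrm{f}}\sigma(x)=\frac{\gamma_{1,\mathrm{f}}}{\omega}\,\omega\sigma(x)=\frac{\gamma_{1,\mathrm{f}}}{\omega}V_{\mathrm{f}}(x)$ and defining $1+\epsilon_{\mathrm{f}}:=\gamma_{1,\mathrm{f}}/\omega$ turns this into precisely $\min_{u\in\mathbb{U}}V_{\mathrm{f}}(f(x,u))+\ell(x,u)\le(1+\epsilon_{\mathrm{f}})V_{\mathrm{f}}(x)$, i.e.~\eqref{eq:term_2}. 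Condition~\eqref{eq:term_3} is then immediate: with these constants $\gamma_{1,\mathrm{f}}/(1+\epsilon_{\mathrm{f}})=\gamma_{1,\mathrm{f}}/(\gamma_{1,\mathrm{f}}/\omega)=\omega$, which lies (trivially) in $[\underline{c}_{\mathrm{f}},\overline{c}_{\mathrm{f}}]=[\omega,\omega]$.

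I do not expect any real obstacle, since the whole argument is a direct substitution; the only point worth flagging is the sign of $\epsilon_{\mathrm{f}}$. One has $\epsilon_{\mathrm{f}}=\gamma_{1,\mathrm{f}}/\omega-1>0$ precisely when the scaling weight is chosen below the one-step controllability constant, $\omega<\gamma_{1,\mathrm{f}}$; if instead $\omega\ge\gamma_{1,\mathrm{f}}$, then $\epsilon_{\mathrm{f}}\le 0$ and, as observed in the footnote following Assumption~\ref{ass:term}, Theorem~\ref{thm:grimm_terminal} already certifies asymptotic stability for every $N\ge 1$, so this boundary case is benign and needs no separate treatment.
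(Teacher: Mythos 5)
Your argument is correct and is essentially the paper's own proof: identify $\min_{u\in\mathbb{U}}V_{\mathrm{f}}(f(x,u))+\ell(x,u)$ with $V_{1,\mathrm{f}}(x)$, apply Assumption~\ref{ass:stab_term} at $k=1$, and read off \eqref{eq:term_1} and \eqref{eq:term_3} by direct substitution with $\underline{c}_{\mathrm{f}}=\overline{c}_{\mathrm{f}}=\omega$. Your closing remark on the possibility $\epsilon_{\mathrm{f}}\leq 0$ when $\omega\geq\gamma_{1,\mathrm{f}}$ is the same caveat the paper handles in the footnote following Assumption~\ref{ass:term}, so nothing is missing.
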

\change{The proof is detailed in Appendix~\ref{app:simple_penalty}.} 
In case $\sigma=\ell_{\min}$, this terminal cost corresponds to the approach in~\cite[Thm.~5.4]{grune2010analysis} and 
in this special case, Theorem~\ref{thm:analytic_terminal_pdf} exactly recovers the existing LP results in~\cite{grune2010analysis} with\footnote{%
The definition of the horizon length differs by $1$ and thus the term corresponds to $\gamma_2-\omega$ in \cite{grune2010analysis}.} $\frac{\epsilon_{\mathrm{f}}}{1+\epsilon_{\mathrm{f}}}\gamma_{1,\mathrm{f}}=\gamma_{1,\mathrm{f}}-\omega$. 
Considering $\sigma(x)=\|x\|^2$, the formulas in Proposition~\ref{prop:exp_IO} yield $\lim_{\omega\rightarrow\infty}\epsilon_{\mathrm{f}}=C_1\rho-1$. 
Thus, unless $C_1$ is close to one (the terminal cost is approximately a CLF), the benefit of a large weight $\omega$ on the last state is limited.\footnote{%
These bounds improve if a multi-step implementation is used~\cite{grune2010analysis}.} 
An important special case corresponds to $\min_{u\in\mathbb{U}}\sigma(f(x,u))\leq \sigma(x)$, which is for example studied in~\cite{grimm2005model}. 
In this case $\lim_{\omega\rightarrow\infty}\epsilon_{\mathrm{f}}= 0$ and thus a high penalty $\omega$ yields stability with short horizons. 
This case is particularly relevant for Lyapunov stable systems, such as undamped oscillators or non-holonomic integrators. 
\begin{proposition}
\label{prop:finite_tail}
Suppose there exist constants $C_\ell\geq 1$, $\rho\in[0,1)$, and a control law $\kappa$, such that $\ell(x_u(k,x),u(k))\leq C_\ell\rho^k\ell_{\min}(x)$ for all $x\in X$, $k\in\mathbb{I}_{\geq 0}$ with $u(\cdot)=\kappa(x(\cdot))$. 
Consider $\sigma=\ell_{\min}$, $V_{\mathrm{f}}(x)=\sum_{k=0}^{M-1}\ell(x_u(k,x),u(k))$ with $u(\cdot)=\kappa(x_u(\cdot))$ and some $M\in\mathbb{I}_{\geq 1}$. 
Then, Assumption~\ref{ass:term} holds with $\overline{c}_{\mathrm{f}}=\frac{1-\rho^M}{1-\rho}C_\ell$, 
$\underline{c}_{\mathrm{f}}=1$, $\epsilon_{\mathrm{f}}=C_\ell\frac{1-\rho}{\rho^{-M}-1}$.  
\end{proposition}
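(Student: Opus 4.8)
The plan is to verify the three inequalities of Assumption~\ref{ass:term} for the stated constants; conditions~\eqref{eq:term_1} and~\eqref{eq:term_3} are short computations, whereas the relaxed CLF inequality~\eqref{eq:term_2} is the crux and is where the particular form of $\epsilon_{\mathrm{f}}$ originates. Throughout, write $u(\cdot)=\kappa(x_u(\cdot,x))$ for the trajectory and input generated by the feedback $\kappa$ from the initial state $x$, so that $V_{\mathrm{f}}(x)=\sum_{k=0}^{M-1}\ell(x_u(k,x),u(k))$.

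\textbf{Bounds~\eqref{eq:term_1} and controllability constants.} The upper bound is the geometric sum $V_{\mathrm{f}}(x)\le\sum_{k=0}^{M-1}C_\ell\rho^k\ell_{\min}(x)=\frac{1-\rho^M}{1-\rho}C_\ell\,\sigma(x)$, and the lower bound $V_{\mathrm{f}}(x)\ge\ell(x,\kappa(x))\ge\ell_{\min}(x)=\sigma(x)$ retains only the $k=0$ summand (using $\kappa(x)\in\mathbb{U}$ and nonnegativity of $\ell$); hence $\overline{c}_{\mathrm{f}}=\frac{1-\rho^M}{1-\rho}C_\ell$ and $\underline{c}_{\mathrm{f}}=1$. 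Since the tail cost is shift-invariant along closed-loop trajectories, $V_{\mathrm{f}}(x_u(k,x))=\sum_{j=k}^{M+k-1}\ell(x_u(j,x),u(j))$, so applying $\kappa$ to the horizon-$k$ problem gives $V_{k,\mathrm{f}}(x)\le\sum_{j=0}^{M+k-1}C_\ell\rho^j\ell_{\min}(x)$, i.e., Assumption~\ref{ass:stab_term} holds with $\gamma_{k,\mathrm{f}}=\frac{1-\rho^{M+k}}{1-\rho}C_\ell$; in particular $\gamma_{1,\mathrm{f}}=\overline{c}_{\mathrm{f}}+C_\ell\rho^M$.

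\textbf{Relaxed CLF condition~\eqref{eq:term_2}.} Taking $u=\kappa(x)$ and using shift-invariance of the tail once more,
\begin{align*}
V_{\mathrm{f}}(f(x,\kappa(x)))+\ell(x,\kappa(x))=\sum_{k=0}^{M}\ell(x_u(k,x),u(k))=V_{\mathrm{f}}(x)+\ell(x_u(M,x),u(M)),
\end{align*}
so it suffices to bound the tail term $\ell(x_u(M,x),u(M))$ by $\epsilon_{\mathrm{f}}V_{\mathrm{f}}(x)$. The naive estimate $\ell(x_u(M,x),u(M))\le C_\ell\rho^M\ell_{\min}(x)\le C_\ell\rho^M V_{\mathrm{f}}(x)$ only yields $\epsilon_{\mathrm{f}}=C_\ell\rho^M$, which is too large; the sharper bound uses the decay property at \emph{every} intermediate state. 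For each $j\in\mathbb{I}_{[0,M-1]}$, the closed loop started at $x_u(j,x)$ is $x_u(j+\cdot,x)$, so the decay property with index $M-j$ gives $\ell(x_u(M,x),u(M))\le C_\ell\rho^{M-j}\ell_{\min}(x_u(j,x))\le C_\ell\rho^{M-j}\ell(x_u(j,x),u(j))$, i.e., a lower bound $\ell(x_u(j,x),u(j))\ge C_\ell^{-1}\rho^{j-M}\ell(x_u(M,x),u(M))$ on each stage of $V_{\mathrm{f}}(x)$. Summing over $j\in\mathbb{I}_{[0,M-1]}$ and evaluating the geometric series,
\begin{align*}
V_{\mathrm{f}}(x)\ge\frac{\ell(x_u(M,x),u(M))}{C_\ell}\sum_{j=0}^{M-1}\rho^{j-M}=\frac{\rho^{-M}-1}{(1-\rho)C_\ell}\,\ell(x_u(M,x),u(M)),
\end{align*}
whence $\ell(x_u(M,x),u(M))\le\epsilon_{\mathrm{f}}V_{\mathrm{f}}(x)$ with $\epsilon_{\mathrm{f}}=\frac{(1-\rho)C_\ell}{\rho^{-M}-1}$; combined with the displayed identity this gives $\min_{u\in\mathbb{U}}V_{\mathrm{f}}(f(x,u))+\ell(x,u)\le(1+\epsilon_{\mathrm{f}})V_{\mathrm{f}}(x)$. (For $\rho=0$ the tail term vanishes and the claim is trivial.)

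\textbf{Condition~\eqref{eq:term_3} and main obstacle.} From $\rho^M\le\rho$ one gets $\epsilon_{\mathrm{f}}=\frac{(1-\rho)C_\ell\rho^M}{1-\rho^M}\le C_\ell\rho^M$; together with $\overline{c}_{\mathrm{f}}\ge1$ and $C_\ell\ge1$ this gives $1+\epsilon_{\mathrm{f}}\le\overline{c}_{\mathrm{f}}+C_\ell\rho^M=\gamma_{1,\mathrm{f}}$ and $\gamma_{1,\mathrm{f}}=\overline{c}_{\mathrm{f}}+C_\ell\rho^M\le\overline{c}_{\mathrm{f}}+C_\ell^2\rho^M=\overline{c}_{\mathrm{f}}(1+\epsilon_{\mathrm{f}})$, i.e., $\underline{c}_{\mathrm{f}}\le\gamma_{1,\mathrm{f}}/(1+\epsilon_{\mathrm{f}})\le\overline{c}_{\mathrm{f}}$. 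The only genuinely nonroutine step is the tight lower bound on the intermediate stage costs in~\eqref{eq:term_2}: discarding the decay information at stages $1,\dots,M-1$ via $\ell_{\min}(x)\le V_{\mathrm{f}}(x)$ would only give $\epsilon_{\mathrm{f}}=C_\ell\rho^M$, and obtaining the announced $\epsilon_{\mathrm{f}}=C_\ell(1-\rho)/(\rho^{-M}-1)$ requires propagating the decay estimate from each intermediate stage and summing the resulting geometric series.
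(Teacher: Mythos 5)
Your proof is correct, and it follows the same overall route as the paper: both establish \eqref{eq:term_1} by the geometric sum and by keeping the $k=0$ summand, and both reduce \eqref{eq:term_2} via the shift identity $V_{\mathrm{f}}(f(x,\kappa(x)))+\ell(x,\kappa(x))=V_{\mathrm{f}}(x)+\ell(x_u(M,x),u(M))$ to bounding the appended stage cost by $\epsilon_{\mathrm{f}}V_{\mathrm{f}}(x)$. The difference is in how that crucial bound is obtained: the paper simply cites the LP analysis in \cite[Prop.~4]{kohler2021stability}, whereas you prove it from scratch by applying the exponential decay hypothesis at every intermediate state $x_u(j,x)$ (using shift invariance of the closed loop under $\kappa$), obtaining $\ell(x_u(j,x),u(j))\geq C_\ell^{-1}\rho^{j-M}\ell(x_u(M,x),u(M))$ for each $j$, and summing the geometric series to get exactly $\epsilon_{\mathrm{f}}=C_\ell(1-\rho)/(\rho^{-M}-1)$ — a clean, self-contained replacement for the external reference, and your remark that the naive bound only yields $C_\ell\rho^M$ correctly identifies why this sharper argument is needed. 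You also go beyond the paper's proof by explicitly verifying \eqref{eq:term_3}: you construct an admissible controllability constant $\gamma_{1,\mathrm{f}}=\overline{c}_{\mathrm{f}}+C_\ell\rho^M$ from the same candidate input and check both inequalities of \eqref{eq:term_3} using $C_\ell\geq 1$ and $\rho^M\leq\rho$; the paper's appendix proof omits this check (implicitly relying on the discussion following Assumption~\ref{ass:term} that the constants can be adjusted), so your treatment is, if anything, more complete. Your handling of the degenerate case $\rho=0$ is also a sensible addition.
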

\change{The proof is detailed in Appendix~\ref{app:finite_tail}.} 
The conditions used in the finite-tail cost construction essentially require $\ell$ to be positive definite (as also considered in~\cite{magni2001stabilizing,kohler2021stability}), since Inequality~\eqref{eq:term_2} in Assumption~\ref{ass:term} requires $V_{\mathrm{f}}$ to be positive definite. 
For $M=1$, the resulting bound $\epsilon_{\mathrm{f}}$ is quite similar to the simple terminal \change{cost} in Proposition~\ref{prop:simple_penalty}. 
By increasing the appended horizon $M$ the constant $\epsilon_{\mathrm{f}}$ can be made arbitrary small, in which case stability holds with arbitrary short horizons $N$. 
Note that typically $\rho\ll 1-{1}/{\overline{\gamma}}$ and thus increasing the extended horizon $M$ has a larger impact on the stability results in Theorem~\ref{thm:analytic_terminal_pdf} compared to the prediction horizon $N$ (cf.~\cite[Rk.~6]{kohler2021stability}). 

\section{Numerical examples}
\label{sec:num}
The following examples demonstrate how the derived stability results can be used to efficiently design a stabilizing MPC \change{for linear and nonlinear systems}. 
The corresponding code is available online.\footnote{%
gitlab.ethz.ch/ics/NMPC-StabilityAnalysis}
\subsection{Linear example - chain of mass-spring-dampers}
\label{sec:num_linear}
We consider a chain of $L=6$ mass-spring-damper systems, corresponding to a stable linear system with $x\in X=\mathbb{R}^{2\cdot L}=\mathbb{R}^{12}$, $u\in\mathbb{U}=[-1,1]$, compare Figure~\ref{fig:illustration}.\footnote{%
 The control input $u$ acts as a force on the last mass. The first mass is connected to a rigid wall. 
Each subsystem has mass of $m=1$, spring constant $k=10$, damping constant $d=2$, resulting in a weakly damped system with $4\cdot m\cdot d\ll k^2$. 
The state is given by $x=[z_1,\dot{z}_1,\dots,z_L,\dot{z}_L]^\top$. 
The discrete-time system is defined using an exact discretization with a sampling time of $h=1$.}
This system is severely underactuated, with medium scale ($n>10$), and hence also poses a challenge in the standard design of terminal ingredients. 
\change{To be more specific, typically a terminal cost and terminal set constraint are chosen based on the LQR with the maximal positive invariant set. However, the corresponding polytopic set becomes intractable for medium scale systems and both components destroy the sparsity of the optimal control problem, which is crucial to allow for efficient distributed solutions, compare also the motivation in~\cite{giselsson2013feasibility,conte2016distributed}. 
Furthermore, for such underactuated systems, a terminal set constraint can result in a small region of attraction or computationally infeasible requirements on the prediction horizon $N$.} 
\begin{figure}[hbtp]
\includegraphics[width=0.45\textwidth]{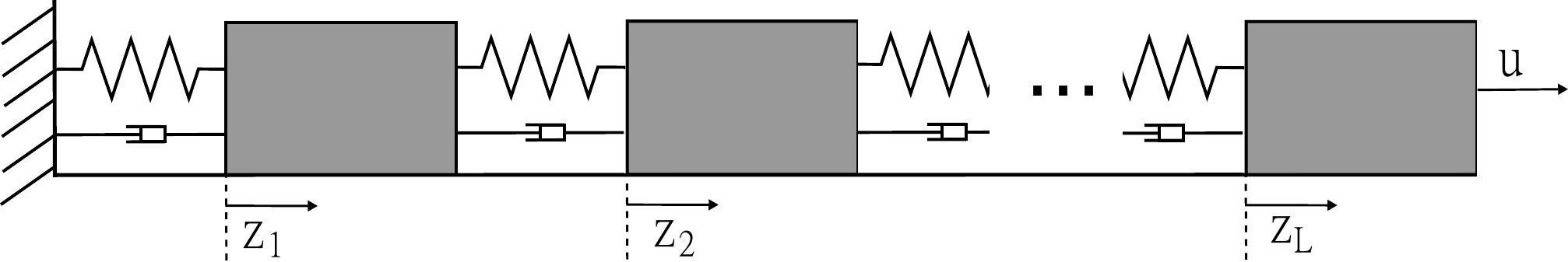}
\caption{Chain of masses with positions $z_i$, $i\in\mathbb{I}_{[1,L]}$ connected by spring and damper elements.}
\label{fig:illustration}
\end{figure}

\subsubsection*{\change{Short horizons can destabilize the system}}
The overall goal is to drive the first mass to the origin using the stage cost $\ell(x,u)=z_1^2+q\|x\|^2+r\|u\|^2$, where $z_1$ is the position of the first mass and $q,r>0$ are tuning variables. 
First, we study the local stability of the MPC using the finite-horizon LQR formula. 
Using $q=q_0=10^{-4}$ and $r=r_0=10^{-5}$, we find that the closed loop is unstable for $N=2$, locally asymptotically stable for $N=3$, and again unstable for $N=4,5$. 
Hence, depending on the choice of the design variables, the closed loop may be unstable and due to the non-monotonicity w.r.t. $N$, a direct tuning based on simulations may be non-trivial.

\subsubsection*{\change{Cost controllability and detectability}}
\change{In the following, we discuss satisfaction of Assumptions~\ref{ass:stab} and \ref{ass:detect}. 
To this end, we distinguish the two options:  
\begin{itemize}
\item
Directly utilizing positive definiteness of the stage cost $\ell$ without using any storage function, i.e., $\sigma=\ell_{\min}$, $W=0$ (Thm.~\ref{thm:analytic_grune}/\ref{thm:analytic_terminal_pdf})
\item Using a separate positive definite storage function with $\sigma=W$ (Thm.~\ref{thm:analytic_sigma_W}/\ref{thm:analytic_terminal_sigma_W}).
\end{itemize}
The system is open-loop stable with a maximal absolute eigenvalue of $0.943$.
As such, for a given (quadratic) state measure $\sigma$, the smallest constants $\gamma_k$ satisfying Assumption~\ref{ass:stab} can be computed with the open-loop control $u=0$ and the finite-horizon Lyapunov equation. 
For a fixed value $\epsilon_{\mathrm{o}}>0$, the "best" quadratic storage $W=\|x\|_{P_{\mathrm{o}}}^2$ satisfying Inequality~\eqref{eq:detect_2} with $\sigma=W$ can be computed using a semi-definite program (SDP), which minimizes $\overline{\gamma}$. 
We use $4$ logarithmic grid points for $\epsilon_{\mathrm{o}}\in[10^{-3},1-10^{-4}]$ and choose the one with the smallest bound~\eqref{eq:hat_alpha_explicit_N}.}

\subsubsection*{\change{Efficient stage cost tuning}}
\change{In the following, we use the results based on positive definite stage costs ($\sigma=\ell$, Thm.~\ref{thm:analytic_grune}) and detectable stage costs ($\sigma=W$, Thm.~\ref{thm:analytic_sigma_W}) to find suitable parameters $q,r,N$ that guarantee global asymptotic stability.}
In addition, we utilize the more general results in Theorems~\ref{thm:analytic_terminal_pdf} and \ref{thm:analytic_terminal_sigma_W} with terminal costs using a final state weighting (Prop.~\ref{prop:simple_penalty}) with $\omega=10$ and the finite-tail cost (Prop.~\ref{prop:finite_tail}) with $M=10$.\footnote{%
In Proposition~\ref{prop:simple_penalty}, we choose the penalty $\tilde{\omega}>0$ such that $\tilde{\omega}\sigma(x)\leq \omega\ell_{\min}(x)$, to ensure that the scaling is independent of $\sigma(x)$
}

Given the cost $q=q_0,r=r_0$, both theorems can only guarantee asymptotic stability with quite conservative bounds $\underline{N}\gg 10^5$. 
Based on the intuition of Theorem~\ref{thm:analytic_grune} \change{($\sigma=\ell$)}, this conservatism is due to the large cost-controllability constant $\overline{\gamma}>10^4$, which can be reduced by increasing the penalty $q$. 
On the other hand, Remark~\ref{rk:stable_R_large} based on Theorem~\ref{thm:analytic_sigma_W} \change{($\sigma=W$)} suggests that the guarantees improve if the input penalty $r$ increased. 
Hence, we consider different values $q$ and $r$ and compute the corresponding prediction horizon bounds $\underline{N}$, compare Figure~\ref{fig:qr}. 
\begin{figure}[hbtp]
\begin{center}
\includegraphics[width=0.4\textwidth]{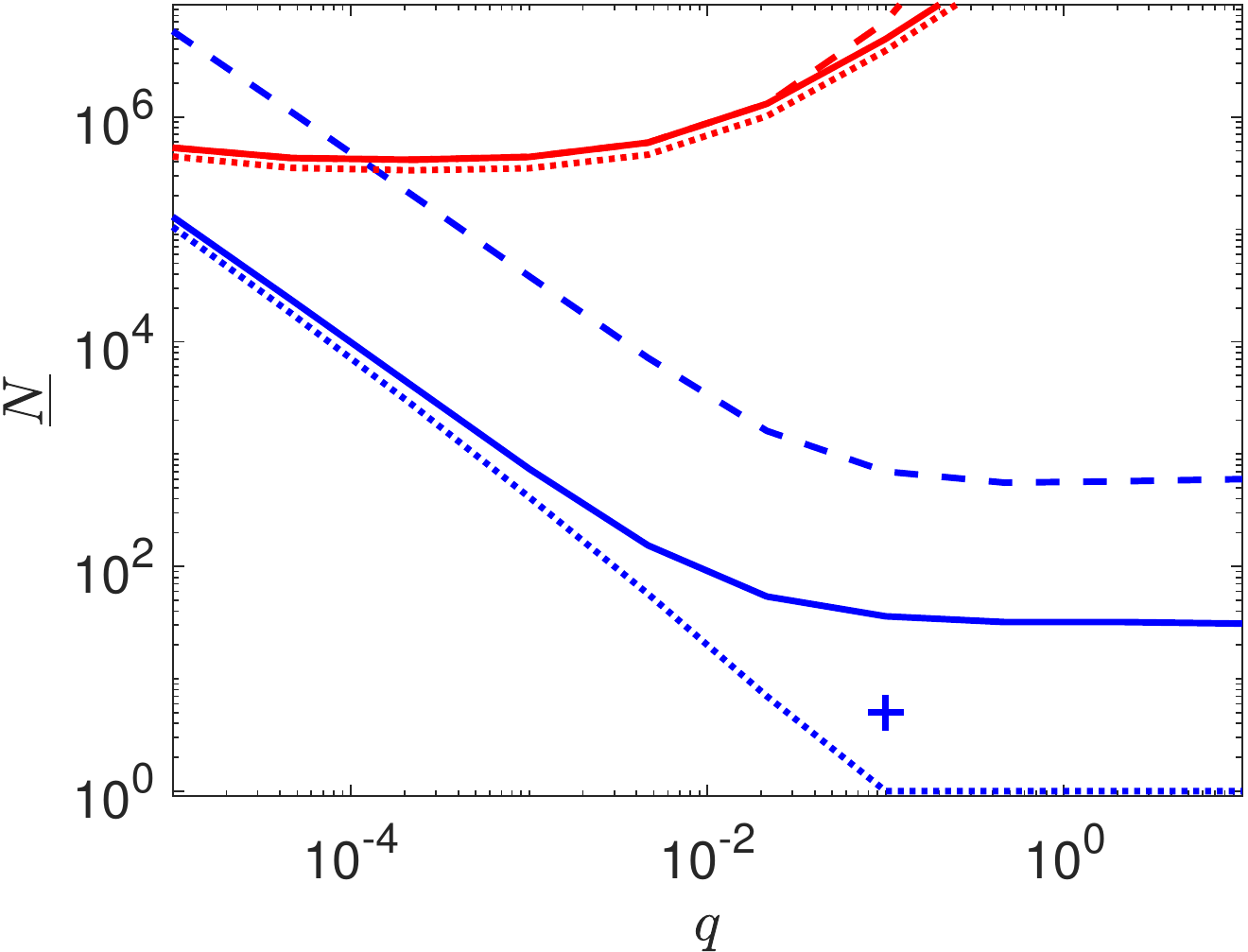}
\includegraphics[width=0.4\textwidth]{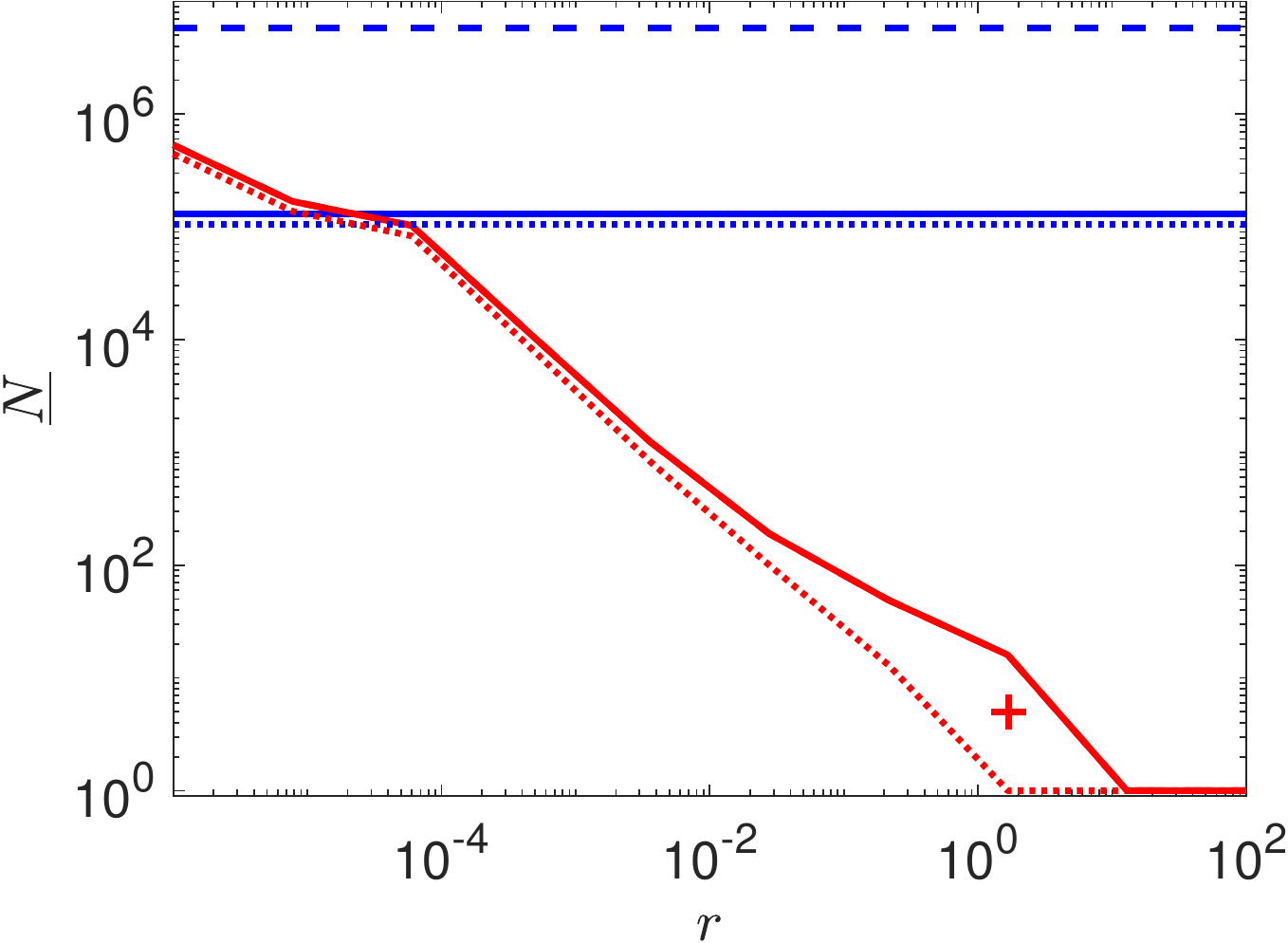}
\end{center}
\caption{\change{Chain of mass-spring-dampers:} Sufficient prediction horizon $\underline{N}$ for varying $q$ \change{(top)} and varying $r$ \change{(bottom)}, for the analysis with $\sigma=\ell$ (blue, \change{Thm.~\ref{thm:analytic_grune}/\ref{thm:analytic_terminal_pdf}}) or $\sigma=W$ (red, \change{Thm.~\ref{thm:analytic_sigma_W}/\ref{thm:analytic_terminal_sigma_W}}):
No terminal cost (solid), terminal weighting $\omega$ (dashed, \change{Prop.~\ref{prop:simple_penalty}}) and finite-tail cost (dotted, \change{Prop.~\ref{prop:finite_tail}}).
\change{The crosses highlight two designs with finite-tail cost that are guaranteed stable (since they are above the dotted line), which are used in the later closed-loop simulations.}}
\label{fig:qr}
\end{figure}

The guarantees provided by Theorem~\ref{thm:analytic_grune} \change{($\sigma=\ell$)} improve with increasing penalty $q$, while the guarantees in Theorem~\ref{thm:analytic_sigma_W} \change{($\sigma=W$)} deteriorate.
By choosing $q=10$, we can ensure global stability with a prediction horizon of $N=31$.
Considering an increasing input penalty $r$, the guarantees provided by Theorem~\ref{thm:analytic_grune} \change{($\sigma=\ell$)} remain invariant, while the bounds provided by Theorem~\ref{thm:analytic_sigma_W} \change{($\sigma=W$)} improve monotonically. 
In particular, with $r=13$, we can ensure asymptotic stability with $N=1$. 
For comparison, the simpler bounds from~\cite{grimm2005model,Koehler2020Regulation} (cf. Thm.~\ref{thm:grimm}), are significantly more conservative and require a horizon of $N=13$ with same parameters to conclude stability. 
In both cases, the simple terminal \change{cost} (Prop.~\ref{prop:simple_penalty}) does not significantly improve the results.
However, the finite-tail cost (Prop.~\ref{prop:finite_tail}) allows for significantly shorter horizons $N$ or more degree of freedom in the cost tuning.

\subsubsection*{\change{Closed-loop simulation}}
For illustration, we generated exemplary closed-loop trajectories (cf. Figure~\ref{fig:cl}) 
using $N=5$ and the following design choices: 
\begin{itemize}
\item \change{Based on Theorem~\ref{thm:analytic_terminal_pdf} ($\sigma=\ell$), $q=10^{-1}$,   $r=r_0$  with $V_{\mathrm{f}}$ using the finite-tail cost (Prop.~\ref{prop:finite_tail}).} 
\item \change{Based on Theorem~\ref{thm:analytic_sigma_W} ($\sigma=W$), $q=q_0$, $r=1.7$, with $V_{\mathrm{f}}$ using the finite-tail cost (Prop.~\ref{prop:finite_tail}).}
\item The unstable parametrization $q=q_0$, $r=r_0$, $V_{\mathrm{f}}=0$. 
\end{itemize}
\begin{figure}[hbtp] 
\begin{center} 
\includegraphics[width=0.4\textwidth]{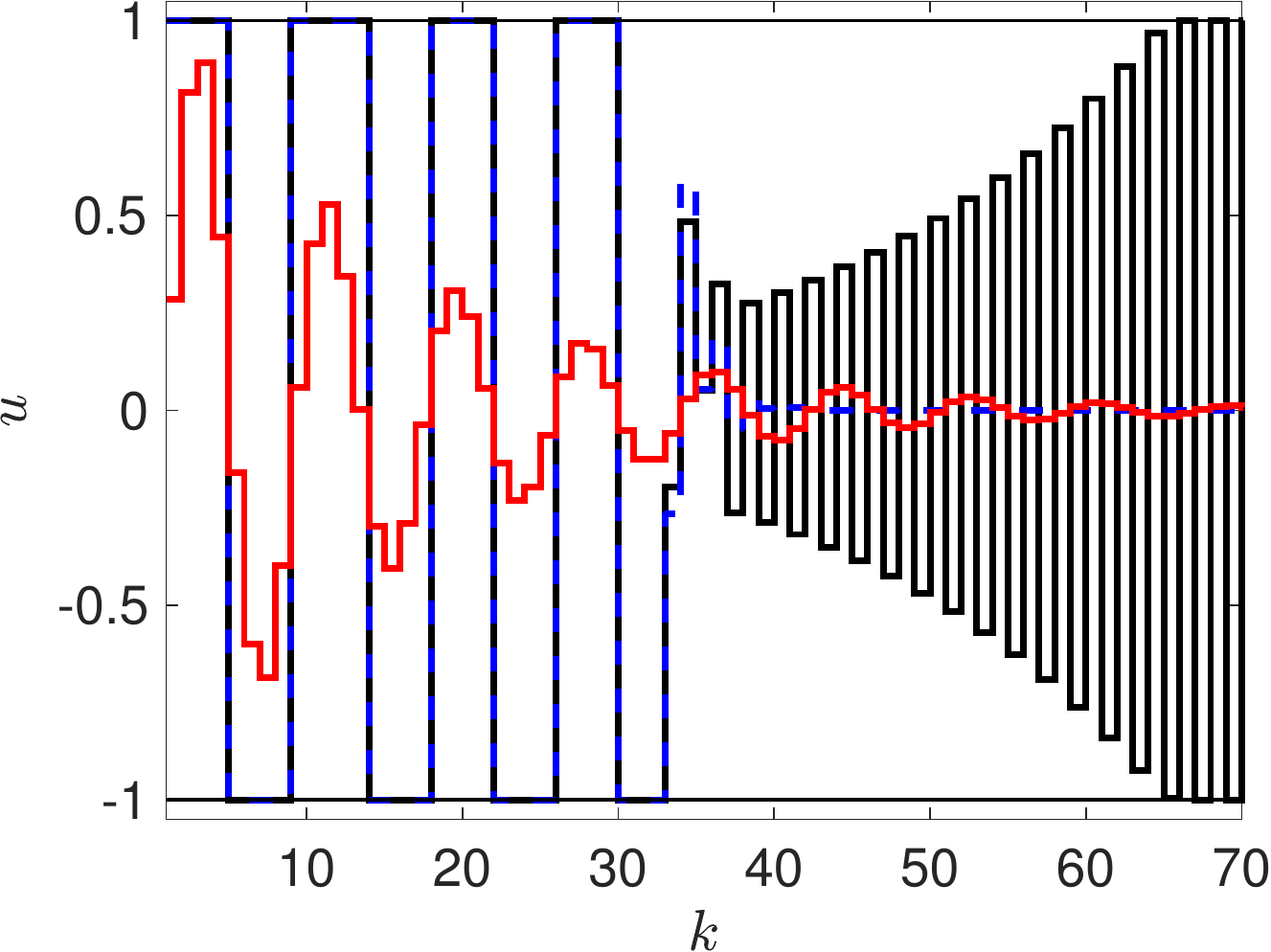}
\end{center}
\caption{\change{Chain of mass-spring-dampers: Closed-loop trajectories with horizon $N=5$ using different design parameters: Based on Theorem~\ref{thm:analytic_terminal_pdf} ($\sigma=\ell$, blue), Theorem~\ref{thm:analytic_sigma_W} ($\sigma=W$, red), and the unstable design (black).}}
\label{fig:cl}
\end{figure}
\change{The tuning based on Theorem~\ref{thm:analytic_terminal_pdf} ($\sigma=\ell$, blue) results in a closed loop that is very similar to the infinite-horizon optimal controller, which starts with a bang-bang control and quickly converges to the origin. 
The tuning based on Theorem~\ref{thm:analytic_terminal_sigma_W} ($\sigma=W$, red) uses a significantly larger input penalty $r$, which results in a slower convergence. 
The unstable design ($q=q_0,r=r_0$, black) is initially almost indistinguishable from an optimal controller.} 
However, eventually small oscillations increase in amplitude and the system converges to a limit cycle with a bang-bang control. 
If the MPC design/tuning were instead based on short simulations/experiments, one might easily conclude that the initial design is already well behaved, which demonstrates the relevance of the stability analysis provided in this paper.

\subsubsection*{\change{Summary}}
Overall, we have considered a simple linear example, where heuristic tuning approaches and standard design procedures for the terminal ingredients are challenging to apply. 
With the derived theory, we could efficiently tune the MPC cost to ensure asymptotic stability with short prediction horizons. 
The overall offline computation to determine the corresponding prediction horizons $\underline{N}$ and plots for all different combinations of $q,r$ and terminal ingredients required roughly $34$ seconds on a Laptop using Matlab with SeDuMi-1.3.

\subsection{Nonlinear example - four tank system}
\label{sec:num_nonlinear}
The following example demonstrates how the derived theoretical results can be similarly applied to nonlinear systems.
To this end, we consider the four-tank system from~\cite{raff2006nonlinear}, which can be compactly represented by:
\begin{align*}
\dot{x}_1=&-c_1\sqrt{x_1}+c_{12}\sqrt{x_2}+c_{1,\mathrm{u}}u_1,\quad 
\dot{x}_2=-c_2\sqrt{x}_2+c_{2,\mathrm{u}}u_2,\\
\dot{x}_3=&-c_3\sqrt{x_3}+c_{3,4}\sqrt{x_4}+c_{3,\mathrm{u}}u_2,\quad 
\dot{x}_4=-c_4\sqrt{x_4}+c_{4,\mathrm{u}}u_1,
\end{align*}
with positive constants $c_i$, $c_{i,\mathrm{u}}>0$, $i\in\mathbb{I}_{[1,4]}$, $c_{1,2},c_{3,4}>0$. 
Here, the states $x_i\geq 0$ corresponds to non-negative water levels and the inputs $u_i\geq 0$ represent the water flow. 
The model constants, constraints, and desired setpoint $x_{\mathrm{s}},u_{\mathrm{s}}$ are taken from~\cite{raff2006nonlinear}. 
This example is often used as a motivation for stability analysis of MPC schemes, as even reasonably long prediction horizons can result in an unstable closed loop (cf. \cite{raff2006nonlinear,kohler2021stability}). 
The system is discretized using  4th-order Runge-Kutta with a sampling time of $T_{\mathrm{s}}=3s$.

\subsubsection*{Unstable closed loop} 
We consider the stage cost $\ell(x,u)=\|y\|^2+r\|u-u_{\mathrm{s}}\|^2+q\|x-x_{\mathrm{s}}\|^2$, $r=10^{-2}$, $q\geq 0$, which primarily penalizes the output $y=[x_1-x_{\mathrm{s},1};x_3-x_{\mathrm{s},3}]^\top$, which has an unstable zero dynamics. 
Even for a relatively large horizon of $N=14$ (corresponding to $42$ seconds), a corresponding MPC scheme does not stabilize this system, unless an additional regularization $q\gg 0$ is added to the cost. 
In the following, we utilize the developed theoretical results, to derive stabilizing MPC formulations by systematically changing the prediction horizon $N$, the stage cost $\ell$, or adding a terminal cost $V_{\mathrm{f}}$.

\subsubsection*{Cost detectability and controllability}
In order to apply the theoretical results in Theorem~\ref{thm:analytic_sigma_W}/\ref{thm:analytic_terminal_sigma_W}, we need to find a non-trivial storage function $W$ certifying Assumption~\ref{ass:detect}. 
In principle, such a storage function $W$ can be constructed analogous to the linear example in Section~\ref{sec:num_linear}, i.e., consider a quadratic function $W(x)$ and pose Inequality~\eqref{eq:detect_2} as an SDP by using a linear difference inclusion. 
However, such methods often result in a significant engineering effort when considering more complex and larger scale nonlinear systems and is hence not pursued. 
Instead, we demonstrate how the relevant constants $\epsilon_{\mathrm{o}},\gamma_{\mathrm{o}},\gamma_k$ can be numerically computed/approximated without requiring a simple analytical/quadratic Lyapunov function. 
By investigating the dynamics, we can see that the output $y$ is observable with lag $\nu=2$ and hence we can study the MPC scheme using the non-minimal state $x(k)=[y(k-1)^\top,y(k-2)^\top,u(k-1)^\top,u(k-2)^\top]^\top\in\mathbb{R}^8$.\footnote{%
\change{If the minimal state $x=[x_1,x_2,x_3,x_4]^\top\in\mathbb{R}^4$ is considered, the past stage cost is not unique and one can instead define $W(x)=\min_{u\in\mathbb{U}^\nu,\tilde{x}\in X: x_u(\nu,\tilde{x})=x} \sum_{j=0}^{\nu-1}\dfrac{j+1}{\nu}\ell(x_{u}(j,\tilde{x}),u(j))$.
}}
Thus,  Assumption~\ref{ass:detect} holds with $W=\sigma$, $\underline{\gamma}_{\mathrm{o}}=\overline{\gamma}_{\mathrm{o}}=1$, $\epsilon_{\mathrm{o}}=1/\nu$, for any $q\geq 0$, using:
\begin{align}
\label{eq:four_tank_storage}
W(x(k))=\sum_{j=1}^{\nu}\dfrac{\nu+1-j}{\nu}\ell(x(k-j),u(k-j)), 
\end{align}
 compare also Remark~\ref{rk:IO_model}.
For the theoretical results in Theorem~\ref{thm:analytic_grune}/\ref{thm:analytic_terminal_pdf}, no storage function $W$ needs to be constructed, however, the results are only applicable if $\sigma=\ell_{\min}$ is positive definite, i.e., $q>0$. 
For both theoretical results, we numerically compute the cost controllability constants $\gamma_k\geq 0$, $k\in\mathbb{I}_{[0,300]}$ (Ass.~\ref{ass:stab}),  by considering a compact interval $x-x_{\mathrm{s}}\in[-5,5]^4$, gridding the set of initial states using $15^4\approx 5\cdot 10^4$ points, simulating the open-loop system using $u=u_{\mathrm{s}}$, and recoding the maximum cost.

\begin{figure}[hbtp]
\begin{center}
\includegraphics[width=0.4\textwidth]{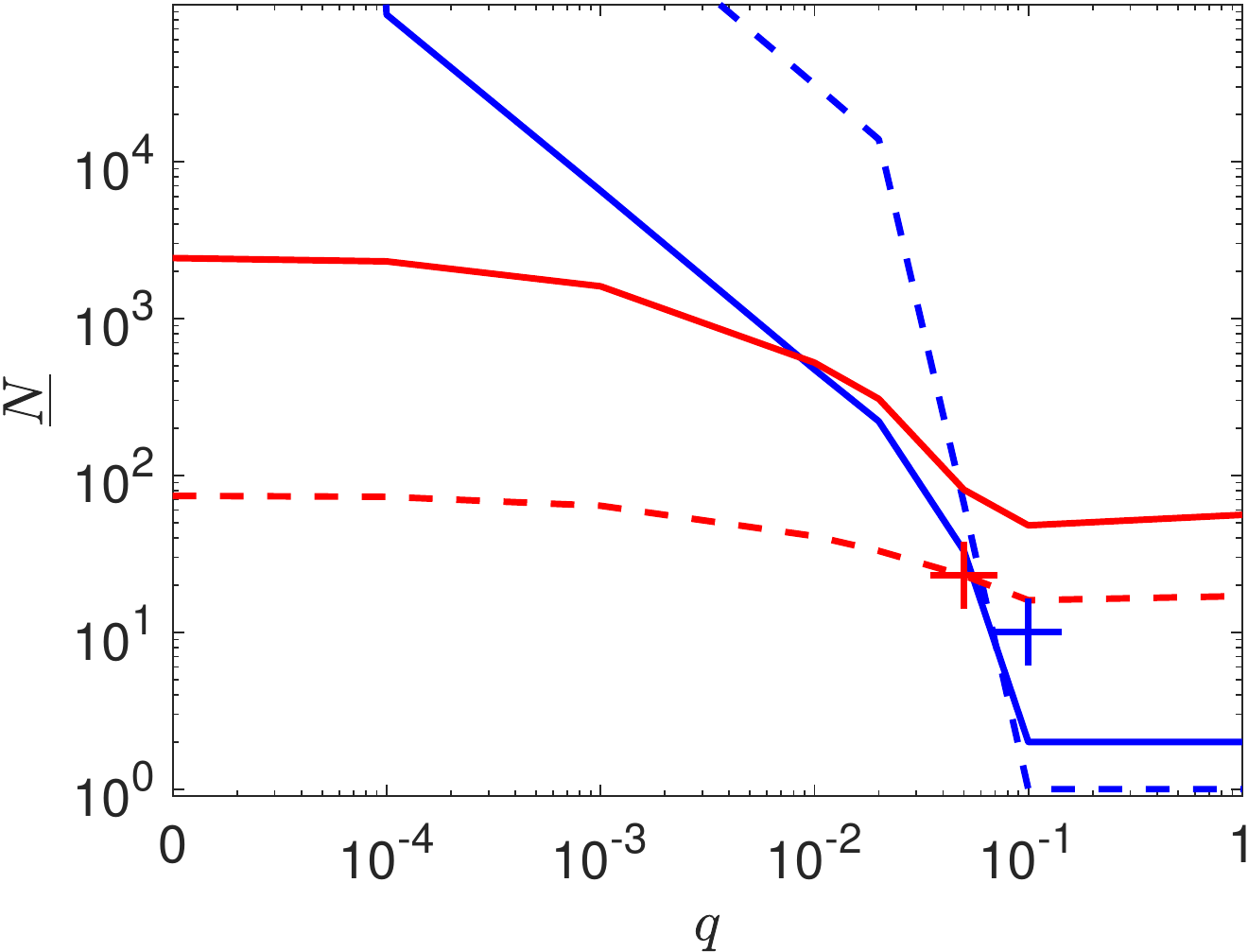}
\end{center}
\caption{\change{Four tank: Sufficient prediction horizon $\underline{N}$ for varying $q$ for the analysis with $\sigma=\ell$ (blue, \change{Thm.~\ref{thm:analytic_grune}/\ref{thm:analytic_terminal_pdf}}) or $\sigma=W$ (red, \change{Thm.~\ref{thm:analytic_sigma_W}/\ref{thm:analytic_terminal_sigma_W}}):
No terminal cost (solid) and terminal weighting $\omega$ (dashed, \change{Prop.~\ref{prop:simple_penalty}}).
The crosses highlight two guaranteed stable designs with the simple terminal cost which are used in the later closed-loop simulations.}}
\label{fig:fourtank_q}
\end{figure}
  
\subsubsection*{Efficient MPC tuning}
In the following, we use the theoretical results to find suitable MPC parametrization that ensures stability by adjusting the tunable weight $q\geq 0$ and investigate the simple\footnote{
\change{Note that also in case $\sigma=W$, based on Equation~\eqref{eq:four_tank_storage}, the terminal cost simply increases the weighting of the last $\nu=2$ stage costs in the horizon.  
}} terminal weighting $V_{\mathrm{f}}=\omega\sigma=\omega W$ (Prop.~\ref{prop:simple_penalty}) with $\omega=10^3$, compare Figure~\ref{fig:fourtank_q}. 

For $q\rightarrow 0$, the results based on positive definite stage costs $\sigma=\ell$ (Thm.~\ref{thm:analytic_grune}/\ref{thm:analytic_terminal_pdf}) deteriorate with $\underline{N}\rightarrow\infty$. 
The bounds using a storage function $\sigma=W$ are also valid for $q=0$ and with a terminal cost we can guarantee stability with a horizon of $\underline{N}=74$. 
As $q$ increases, both bounds improve and as $q\geq 0.1$, the results in based on Theorem~\ref{thm:analytic_grune}/\ref{thm:analytic_terminal_pdf} can even ensure stability with $\underline{N}=2/1$, since the terminal cost $V_{\mathrm{f}}=\omega \ell_{\min}$ results in a CLF, i.e., Assumption~\ref{ass:stab_term} holds with $\epsilon_{\mathrm{f}}=0$. 

\subsubsection*{Closed-loop simulation}
For illustration, we generated exemplary closed-loop trajectories (cf. Fig.~\ref{fig:q_tank_closedloop}).  
We see that the initial design fails to stabilize the desired steady-state, compare also the experiments in~\cite{raff2006nonlinear}, while both proposed designs successfully stabilize the desired steady-state. 
Notably, the design based on positive definite stage costs is stabilizing with a significantly shorter horizon, however, partially due to the larger required weighting ($q=0.1$ vs. $q=0.05$), the overall output tracking error $\|y\|^2$ is significantly larger.

\begin{figure}[hbtp]
\begin{center}
\includegraphics[width=0.4\textwidth]{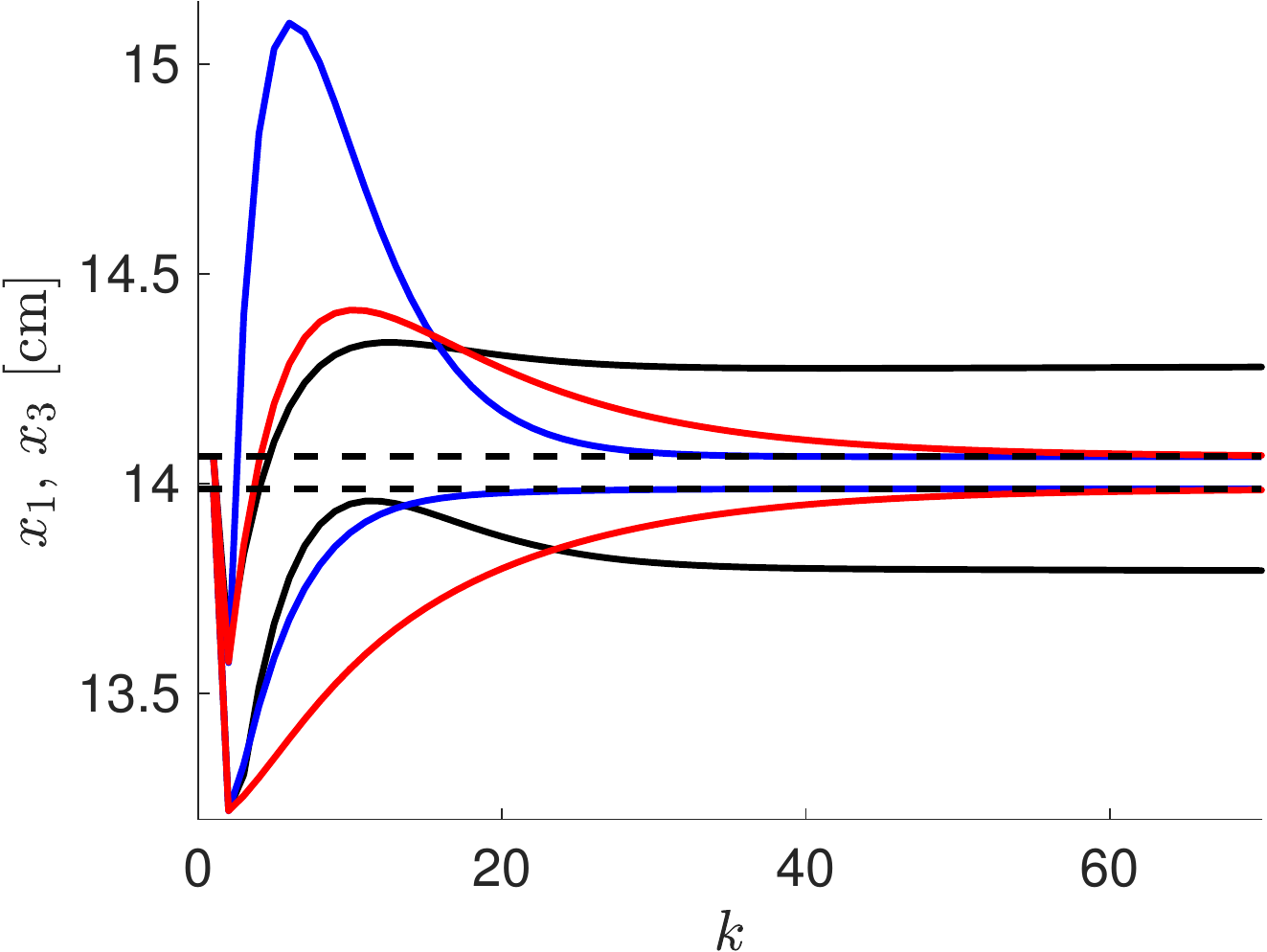}
\end{center}
\caption{\change{Four tank: Closed-loop trajectories of $x_1,x_3$ for initial (unstable) design (black, $q=0$, $N=14$); stabilizing MPC based on $\sigma=\ell$ with terminal cost (blue, $q=0.1$, $N=10$, $\omega=10^3$), and stabilizing MPC based on $\sigma=W$ with terminal cost (red, $q=0.05$, $N=23$, $\omega=10^3$). Reference steady-state $x_{\mathrm{s}}$ is dashed in black.}}
\label{fig:q_tank_closedloop}
\end{figure}

\subsubsection*{Summary}
Similar to the linear example (Sec.~\ref{sec:num_linear}), we have seen how the derived theory can be used to efficiently design a stabilizing NMPC scheme by properly choosing the prediction horizon $N$, the stage cost $\ell$, and possible a terminal cost $V_{\mathrm{f}}$. 
When considering nonlinear systems, the benefits of the derived theory become more pronounced as few alternative systematic designs are available. 
However, the numerical computation of the involved constants $\gamma_k$ becomes more challenging, especially when considering medium to large scale systems.  
In particular, even though the dimension of this nonlinear system is relatively small with $x\in\mathbb{R}^4$, the cost-controllability constants $\gamma_k$ were only numerically approximated on a (coarse) grid  and the overall computation times to derive the prediction horizons $\underline{N}$ for all considered parameter combinations took approximately 45 minutes using Matlab on a Laptop.

\section{Conclusion}
\label{sec:sum}
We have presented a stability and performance analysis for NMPC schemes with positive semi-definite (detectable) stage costs and a general positive-definite terminal costs using an LP analysis. 
The presented results recover existing results for positive definite cost functions~\cite{grune2009analysis,grune2010analysis} as a special case and drastically improve the existing bounds for detectable costs~\cite{grimm2005model,Koehler2020Regulation}. 

\appendix
Appendix~\ref{sec:app_1} contains the proofs for the general results with terminal costs (Sec.~\ref{sec:terminal}). 
Appendix~\ref{sec:app_2} shows that the results in Section~\ref{sec:theory} directly follow as a special case. 
Appendix~\ref{sec:app_3} proves the results in Section~\ref{sec:design} and \ref{sec:term_design}. 
Appendix~\ref{sec:app_4} contains some auxiliary  lemmas used in the proofs. 

\subsection{Proofs - Section~\ref{sec:terminal}}
\label{sec:app_1}

 \subsubsection{Proof of Theorem~\ref{thm:grimm_terminal}}
\label{app:grimm_terminal}
\textbf{Part I: }The upper bound in~\eqref{eq:Lyap_terminal_1} follows directly from \eqref{eq:stab_term} and \eqref{eq:detect_1}.
The lower bound in~\eqref{eq:Lyap_terminal_1} follows from~\eqref{eq:detect_2} with $V_{N,\mathrm{f}}\geq \ell_{\min}$, $W\geq 0$ using $N\geq 1$. 
Abbreviate $\overline{x}_k=x_{u_{N,\mathrm{f},x}^*}(k,x)$, $k\in\mathbb{I}_{[0,N]}$.  
Using~\eqref{eq:detect_2} in a telescopic sum and $W(\overline{x}_N)\geq 0$, we have 
\begin{align}
\label{eq:sigma_sum_terminal}
V_{\mathrm{f}}(\overline{x}_N)+\epsilon_{\mathrm{o}}\sum_{k=0}^{N-1}\sigma(\overline{x}_k)\leq Y_{N,\mathrm{f}}(x)
\stackrel{\eqref{eq:Lyap_terminal_1}}{\leq}&(\gamma_{N,\mathrm{f}}+\gamma_{\mathrm{o}})\sigma(x).
\end{align}
 In the following, we use a case distinction to show 
\begin{align}
\label{eq:V_decrease_terminal}
V_{N,\mathrm{f}}(f(x,\mu_{\mathrm{f}}))+\ell(x,\mu_{\mathrm{f}}(x))-V_{N,\mathrm{f}}(x)
{\leq} (1-\alpha_{N,\mathrm{f}})\epsilon_{\mathrm{o}}\sigma(x). 
\end{align}
\textbf{Case 1: } Suppose
\begin{align}
\label{eq:grimm_terminal_case1}
V_{\mathrm{f}}(\overline{x}_N)\leq \dfrac{\overline{\gamma}_{\mathrm{f}}(\gamma_{N,\mathrm{f}}+\gamma_{\mathrm{o}})}{(N-1)\epsilon_{\mathrm{o}}(1+\epsilon_{\mathrm{f}})+\overline{\gamma}_{\mathrm{f}}}\sigma(x). 
\end{align}
We can upper bound the value function at the next state by \change{invoking} Assumption~\ref{ass:term}, i.e.,
\begin{align*}
&V_{N,\mathrm{f}}(f(x,\mu_{\mathrm{f}}))+\ell(x,\mu_{\mathrm{f}}(x))-V_{N,\mathrm{f}}(x)\nonumber\\
\leq& \min_{u\in\mathbb{U}}\ell(\overline{x}_N,u)+V_{\mathrm{f}}(f(\overline{x}_N,u))-V_{\mathrm{f}}(\overline{x}_N)\nonumber\\
\stackrel{\eqref{eq:term_2}}{\leq }&\epsilon_{\mathrm{f}}V_{\mathrm{f}}(\overline{x}_N)\stackrel{\eqref{eq:grimm_terminal_case1}}{\leq} \underbrace{\dfrac{\epsilon_{\mathrm{f}}\overline{\gamma}_{\mathrm{f}}(\gamma_{N,\mathrm{f}}+\gamma_{\mathrm{o}})}{(N-1)\epsilon_{\mathrm{o}} (1+\epsilon_{\mathrm{f}})+\overline{\gamma}_{\mathrm{f}}}}_{=(1-\alpha_{N,\mathrm{f}})\epsilon_{\mathrm{o}}}\sigma(x).
\end{align*}
\textbf{Case 2: } In case~\eqref{eq:grimm_terminal_case1} does not hold, Inequality~\eqref{eq:sigma_sum_terminal} ensures
\begin{align*}
&\epsilon_{\mathrm{o}}\sum_{k=0}^{N-1}\sigma(\overline{x}_k)\nonumber\\
\leq& 
(\gamma_{N,\mathrm{f}}+\gamma_{\mathrm{o}})\left(1-\dfrac{\overline{\gamma}_{\mathrm{f}}}{(N-1)\epsilon_{\mathrm{o}}(1+\epsilon_{\mathrm{f}})+\overline{\gamma}_{\mathrm{f}}}\right)\sigma(x)\nonumber\\
=&(\gamma_{N,\mathrm{f}}+\gamma_{\mathrm{o}})\dfrac{(N-1)\epsilon_{\mathrm{o}} (1+\epsilon_{\mathrm{f}})}{(N-1)\epsilon_{\mathrm{o}}(1+\epsilon_{\mathrm{f}})+\overline{\gamma}_{\mathrm{f}}}\sigma(x).
\end{align*}
Hence, there exists a $k_x\in\mathbb{I}_{[1,N-1]}$, such that
\begin{align}
\label{eq:sigma_k_x_terminal}
\sigma(\overline{x}_{k_x})\leq &\dfrac{\gamma_{N,\mathrm{f}}+\gamma_{\mathrm{o}}}{\epsilon_{\mathrm{o}}(N-1)}\dfrac{(N-1)\epsilon_{\mathrm{o}}(1+\epsilon_{\mathrm{f}})}{(N-1)\epsilon_{\mathrm{o}}(1+\epsilon_{\mathrm{f}})+\overline{\gamma}_{\mathrm{f}}}\sigma(x)\nonumber\\
=&\dfrac{(\gamma_{N,\mathrm{f}}+\gamma_{\mathrm{o}})(1+\epsilon_{\mathrm{f}})}{(N-1)\epsilon_{\mathrm{o}}(1+\epsilon_{\mathrm{f}})+\overline{\gamma}_{\mathrm{f}}}\sigma(x).
\end{align}
Correspondingly, we can upper bound the value function at the next state by \change{invoking} Assumption~\ref{ass:stab_term} at $k_x$, i.e.,
\begin{align*}
&V_{N,\mathrm{f}}(f(x,\mu_{\mathrm{f}}(x)))+\ell(x,\mu_{\mathrm{f}}(x))\nonumber\\
\leq& \mathcal{J}_{k_x}(x,u^*_{N,\mathrm{f},x})+V_{N-k_x+1,\mathrm{f}}(\overline{x}_{k_x})\nonumber\\
\stackrel{\eqref{eq:stab_term}}{\leq} &V_{N,\mathrm{f}}(x)-V_{\mathrm{f}}(\overline{x}_N)+
\overline{\gamma}_{\mathrm{f}}\sigma(\overline{x}_{k_x})\nonumber\\
\stackrel{\eqref{eq:grimm_terminal_case1},\eqref{eq:sigma_k_x_terminal}}{\leq} &V_{N,\mathrm{f}}(x)
- \dfrac{\overline{\gamma}_{\mathrm{f}}(\gamma_{N,\mathrm{f}}+\gamma_{\mathrm{o}})}{(N-1)\epsilon_{\mathrm{o}}(1+\epsilon_{\mathrm{f}})+\overline{\gamma}_{\mathrm{f}}}\sigma(x)\nonumber\\
&+\dfrac{\overline{\gamma}_{\mathrm{f}}(\gamma_{N,\mathrm{f}}+\gamma_{\mathrm{o}})(1+\epsilon_{\mathrm{f}})}{(N-1)\epsilon_{\mathrm{o}}(1+\epsilon_{\mathrm{f}})+\overline{\gamma}_{\mathrm{f}}}\sigma(x)\nonumber\\
=&V_{N,\mathrm{f}}(x)+\underbrace{\dfrac{\epsilon_{\mathrm{f}}\overline{\gamma}_{\mathrm{f}}(\gamma_{N,\mathrm{f}}+\gamma_{\mathrm{o}})}{(N-1)\epsilon_{\mathrm{o}} (1+\epsilon_{\mathrm{f}})+\overline{\gamma}_{\mathrm{f}}}}_{=(1-\alpha_{N,\mathrm{f}})\epsilon_{\mathrm{o}}}
\sigma(x).
\end{align*}
\textbf{Part II: }
Condition~\eqref{eq:Lyap_terminal_2} follows by combining Inequality~\eqref{eq:V_decrease_terminal} with \eqref{eq:detect_2}. 
Asymptotic stability of $\mathcal{A}$ follows by applying Inequalities~\eqref{eq:set} to \eqref{eq:Lyap_terminal} with the Lyapunov function $Y_{N,\mathrm{f}}$ and $\alpha_{N,\mathrm{f}}>0$ for $N>\underline{N}_{\mathrm{f}}$.
Regarding the performance bound~\eqref{eq:performance_terminal}, note that
\begin{align*}
&V_{N,\mathrm{f}}(x_{\mu_{\mathrm{f}}}(k+1))-V_{N,\mathrm{f}}(x_{\mu_{\mathrm{f}}}(k))\nonumber\\
&+(1-\alpha_{N,\mathrm{f}})(W(x_{\mu_{\mathrm{f}}}(k+1))-W(x_{\mu_{\mathrm{f}}}(k)))\nonumber\\
\stackrel{\eqref{eq:detect_2},\eqref{eq:Lyap_terminal_2}}{\leq}&-\alpha_{N,\mathrm{f}}\ell(x_{\mu_{\mathrm{f}}},\mu_{\mathrm{f}}(x_{\mu_{\mathrm{f}}}(k))).
\end{align*}
Using this inequality in a telescopic sum and $W\geq 0$, $V_{N,\mathrm{f}}\geq 0$, we get
\begin{align*}
\alpha_{N,\mathrm{f}}(\mathcal{J}^{\mu_{\mathrm{f}}}_\infty(x_0)+W(x_0))\leq Y_{N,\mathrm{f}}(x_0).
\end{align*}
Using the infinite-horizon optimal trajectory, i.e.,  $u_{\infty,x}^*$, as a feasible candidate solution yields  
$V_{N,\mathrm{f}}(x)\leq V_\infty(x)+V_{\mathrm{f}}(x_{u_{\infty,x}^*}(N,x))$.
Note that $\lim_{N\rightarrow\infty}V_{N,\mathrm{f}}+W=Y_\infty$ and hence $Y_\infty$ satisfies Inequalities~\eqref{eq:Lyap_terminal} with $\gamma_{N,\mathrm{f}}=\overline{\gamma}_{\mathrm{f}}$, $\alpha_{N,\mathrm{f}}=1$.
By combining Inequalities~\eqref{eq:Lyap_terminal} for $N=\infty$ in a telescopic sum, the infinite-horizon optimal trajectory satisfies 
\begin{align*}
Y_\infty(x_{u^*_{\infty,x}}(N,x))\leq \left(1-\dfrac{\epsilon_{\mathrm{o}}}{\overline{\gamma}_{\mathrm{f}}+\gamma_{\mathrm{o}}}\right)^NY_\infty(x).  
\end{align*}
Lastly, utilizing $V_{\mathrm{f}}(x)\stackrel{\eqref{eq:term_1}}{\leq} \overline{c}_{\mathrm{f}}\sigma(x)\leq \dfrac{\overline{c}_{\mathrm{f}}}{\epsilon_{\mathrm{o}}}Y_\infty(x)$, we arrive at the performance bound~\eqref{eq:performance_terminal}.  \qed


 \subsubsection{Proof of Theorem~\ref{thm:terminal_LP}}
\label{app:terminal_LP}
In case Inequality~\eqref{eq:V_decrease_terminal} holds for any possible solution, then also the remaining derivations in Theorem~\ref{thm:grimm_terminal} hold, with stability only for $\alpha_{N,\mathrm{f}}>0$. 
In Part I, we derive a general optimization problem to compute a constant $\alpha_{N,\mathrm{f}}$ satisfying Inequality~\eqref{eq:V_decrease_terminal}. 
In Part II, we show that this is equivalent to the LP~\eqref{eq:LP_terminal}. \\
\textbf{Part I: }
Denote $\tilde{\ell}_k=\ell(x_{u^*_{N,\mathrm{f},x}}(k,x),u^*_{N,\mathrm{f},x}(k))$, $k\in\mathbb{I}_{[0,N-1]}$, $\tilde{W}_k=W(x_{u^*_{N,\mathrm{f},x}}(k,x))$, $\tilde{\sigma}_k=\sigma(x_{u^*_{N,\mathrm{f},x}}(k,x))$, $k\in\mathbb{I}_{[0,N]}$, $\tilde{V}_{\mathrm{f}}=V_{\mathrm{f}}(x_{u^*_{N,\mathrm{f},x}}(N,x))$ and $\tilde{V}=V_{N,\mathrm{f}}(x_{u^*_{N,\mathrm{f},x}}(1,x))$. 
Given that $V_{N,\mathrm{f}}(x)=\sum_{k=0}^{N-1}\tilde{\ell}_k+\tilde{V}_{\mathrm{f}}$,  Inequality~\eqref{eq:V_decrease_terminal} is equivalent to
\begin{align}
\label{eq:LP_desired} 
\tilde{V}-\sum_{j=1}^{N-1}\tilde{\ell}_j-\tilde{V}_{\mathrm{f}}\leq (1-\alpha_{N,\mathrm{f}})\epsilon_{\mathrm{o}}\tilde{\sigma}_0.
\end{align}
In case $\tilde{\sigma}_0=0$, the derivation in Theorem~\ref{thm:grimm_terminal} directly ensures that~\eqref{eq:LP_desired} holds for any $\alpha_{N,\mathrm{f}}\in\mathbb{R}$. 
Thus, in the following we consider w.l.o.g. the case $\tilde{\sigma}_0>0$. 
All the considered quantities are non-negative, i.e., 
\begin{align}
\label{eq:LP_bounds_1}
&\tilde{\ell}_k\geq 0,~k\in\mathbb{I}_{[0,N-1]},~\tilde{V}_{\mathrm{f}}\geq 0,~
\tilde{\sigma}_k\geq 0,~\tilde{W}_k\geq 0,~k\in\mathbb{I}_{[0,N]}.
\end{align}
Furthermore, Assumption~\ref{ass:detect} implies
\begin{align}
\label{eq:LP_bounds_2}
\underline{\gamma}_{\mathrm{o}}\tilde{\sigma}_k\leq \tilde{W}_k\stackrel{\eqref{eq:detect_1}}{\leq}& \overline{\gamma}_{\mathrm{o}}\tilde{\sigma}_k,~k\in\mathbb{I}_{[0,N]},\\
\label{eq:LP_bounds_3}
\tilde{W}_{k+1}-\tilde{W}_k\stackrel{\eqref{eq:detect_2}}{\leq}& -\epsilon_{\mathrm{o}}\tilde{\sigma}_k+\tilde{\ell}_k,~k\in\mathbb{I}_{[0,N-1]}.
\end{align}
The principle of optimality implies $\sum_{j=k}^{N-1}\tilde{\ell}_j+\tilde{V}_{\mathrm{f}}=V_{N-k,\mathrm{f}}(x_{u_{N,\mathrm{f},x}}^*(k,x))$. 
Thus, Assumption~\ref{ass:stab_term} yields
\begin{align}
\label{eq:LP_bounds_4}
&\sum_{j=k}^{N-1}\tilde{\ell}_j+\tilde{V}_{\mathrm{f}}\stackrel{\eqref{eq:stab_term}}{\leq} \gamma_{N-k,\mathrm{f}}\tilde{\sigma}_k,~k\in\mathbb{I}_{[0,N-1]}.
\end{align}
A feasible input sequence at the next time step is given by $u_{N,\mathrm{f},x}^*(j)$, $j\in\mathbb{I}_{[1,k-1]}$ appended by the optimal solution starting at the state $x_{u_{N,\mathrm{f},x}}^*(k,x)$, i.e., 
\begin{align*}
\tilde{V}\leq \sum_{j=1}^{k-1}\tilde{\ell}_j+V_{N-k+1,\mathrm{f}}(x_{u_{N,\mathrm{f},x}}^*(k,x)),~k\in\mathbb{I}_{[1,N]}.
\end{align*}
Using Inequality~\eqref{eq:stab_term}, this implies 
\begin{align}
\label{eq:LP_bounds_5}
&\tilde{V}\leq \sum_{j=1}^{k-1}\tilde{\ell}_j+\gamma_{N-k+1,\mathrm{f}}\tilde{\sigma}_k,~k\in\mathbb{I}_{[1,N]}.
\end{align}
Appending the candidate from Assumption~\ref{ass:term} at $k=N$ yields
\begin{align}
\label{eq:LP_bounds_6}
&\tilde{V}\leq \sum_{j=1}^{N-1}\tilde{\ell}_j+(1+\epsilon_{\mathrm{f}})\tilde{V}_{\mathrm{f}}.
\end{align}
Furthermore, Assumption~\ref{ass:term} implies
\begin{align}
\label{eq:LP_bounds_7}
\underline{c}_{\mathrm{f}}\tilde{\sigma}_N\leq\tilde{V}_{\mathrm{f}}\leq \overline{c}_{\mathrm{f}}\tilde{\sigma}_N. 
\end{align}
Consider the following optimization problem:
\begin{subequations}
\label{eq:opt_general}
\begin{align}
\label{eq:opt_general_1}
\alpha_{N,\mathrm{f}}=&\inf_{\tilde{\ell},\tilde{W},\tilde{\sigma},\tilde{V},\alpha_{N,\mathrm{f}},\tilde{V}_{\mathrm{f}}}\alpha_{N,\mathrm{f}}\\
\label{eq:opt_general_2}
\text{s.t. }&~\tilde{\sigma}_0\epsilon_{\mathrm{o}}(1-\alpha_{N,\mathrm{f}})\geq \tilde{V}-\sum_{k=1}^{N-1}\tilde{\ell}_k-\tilde{V}_{\mathrm{f}},\\
\label{eq:opt_general_3}
&\tilde{\sigma}_0>0,~\eqref{eq:LP_bounds_1},\eqref{eq:LP_bounds_2},\eqref{eq:LP_bounds_3},\eqref{eq:LP_bounds_4},\eqref{eq:LP_bounds_5}, \eqref{eq:LP_bounds_6}, \eqref{eq:LP_bounds_7}.
\end{align}
\end{subequations}
Given that any optimal trajectory satisfies~\eqref{eq:LP_bounds_1}--\eqref{eq:LP_bounds_7}, the constant $\alpha_{N,\mathrm{f}}$ computed using~\eqref{eq:opt_general} is guaranteed to satisfy~\eqref{eq:V_decrease_terminal}, compare also arguments in~\cite[Thm.~4.2, Cor.~4.5]{grune2009analysis}. \\ 
\textbf{Part II: }Except for the constraint~\eqref{eq:opt_general_2}, the optimization problem~\eqref{eq:opt_general} only consists of linear terms. 
For a fixed $\alpha_{N,\mathrm{f}}$, the constraints in~\eqref{eq:opt_general} are linear in the decision variables. 
Thus, we can w.l.o.g. restrict $\tilde{\sigma}_0=1$ by rescaling the other variables (cf.~\cite[Lemma~4.6]{grune2009analysis}). 
Furthermore, given that the minimum $\alpha_{N,\mathrm{f}}$ is only constrained by~\eqref{eq:opt_general_2} (and guaranteed to be finite by Theorem~\ref{thm:grimm_terminal}), any minimizer satisfies this constraint with equality.  
Hence, using $\tilde{\sigma}_0=1$, we can directly plug in the constraint~\eqref{eq:opt_general_2} in the objective of~\eqref{eq:opt_general}, which results in the LP~\eqref{eq:LP_terminal}. \qed

\subsubsection{Proof of Theorem~\ref{thm:analytic_terminal_pdf}} 
\label{app:analytic_terminal_pdf}
 Given $\underline{\gamma}_{\mathrm{o}}=\overline{\gamma}_{\mathrm{o}}=0$, $\epsilon_{\mathrm{o}}=1$, we have\footnote{%
Inequality~\eqref{eq:LP_terminal_W_decrease} yields $\tilde{\ell}_k\geq \tilde{\sigma}_k$ and one can show that the minimizer satisfies the constraint with equality. For $k=N$, define $\tilde{\ell}_N:=\tilde{\sigma}_N$.}  $\tilde{W}=0$, $\tilde{\ell}=\tilde{\sigma}$.
Thus, the LP~\eqref{eq:LP_terminal} reduces to 
\begin{subequations}
\label{eq:LP_terminal_pdf}
\begin{align}
\alpha_{N,\mathrm{f}}
:=&\min_{\tilde{\ell},\tilde{V},\tilde{V}_{\mathrm{f}}}
\sum_{k=1}^{N-1}\tilde{\ell}_k+\tilde{V}_{\mathrm{f}}-\tilde{V}+1\nonumber\\
\label{eq:LP_terminal_pdf_nonneg}
\mathrm{s.t. ~}&\tilde{\ell}_0=1,~\tilde{\ell}_k\geq 0,~k\in\mathbb{I}_{[1,N]},~\tilde{V}_{\mathrm{f}}\geq 0,\\
\label{eq:LP_terminal_pdf_V_bound}
&\sum_{j=k}^{N-1}\tilde{\ell}_j+\tilde{V}_{\mathrm{f}}\leq \gamma_{N-k,\mathrm{f}}\tilde{\ell}_k,~k\in\mathbb{I}_{[0,N-1]},\\
\label{eq:LP_terminal_pdf_V_next_bound}
&\tilde{V}\leq \sum_{j=1}^{k-1}\tilde{\ell}_j+\gamma_{N-k+1,\mathrm{f}}\tilde{\ell}_k,~k\in\mathbb{I}_{[1,N]},\\
\label{eq:LP_terminal_pdf_Vf_next_bound}
&\tilde{V}\leq \sum_{j=1}^{N-1}\tilde{\ell}_j+(1+\epsilon_{\mathrm{f}})\tilde{V}_{\mathrm{f}},\\
\label{eq:LP_terminal_pdf_Vf_bound}
&\underline{c}_{\mathrm{f}}\tilde{\ell}_N\leq\tilde{V}_{\mathrm{f}}\leq \overline{c}_{\mathrm{f}}\tilde{\ell}_N. 
\end{align}
\end{subequations} 
Consider the following simpler LP: 
\begin{subequations}
\label{eq:LP_terminal_pdf_reduced}
\begin{align}
\hat{\alpha}_{N,\mathrm{f}}
:=&\min_{\tilde{\ell},\tilde{V},\tilde{V}_{\mathrm{f}}}
\sum_{k=1}^{N-1}\tilde{\ell}_k+\tilde{V}_{\mathrm{f}}-\tilde{V}+1\\
\mathrm{s.t. ~}
\label{eq:LP_terminal_pdf_reduced_V_bound}
&\sum_{j=1}^{N-1}\tilde{\ell}_j+\tilde{V}_{\mathrm{f}}\leq \gamma_{N,\mathrm{f}}-1,\\ 
\label{eq:LP_terminal_pdf_reduced_V_next_bound}
&\tilde{V}\leq \sum_{j=1}^{k-1}\tilde{\ell}_j+\gamma_{N-k+1,\mathrm{f}}\tilde{\ell}_k,~k\in\mathbb{I}_{[1,N-1]},\\
\label{eq:LP_terminal_pdf_reduced_Vf_next_bound}
&\tilde{V}\leq \sum_{j=1}^{N-1}\tilde{\ell}_j+(1+\epsilon_{\mathrm{f}})\tilde{V}_{\mathrm{f}}.
\end{align}
\end{subequations}
Denote a minimizer to~\eqref{eq:LP_terminal_pdf_reduced} by $\tilde{\ell}^*,\tilde{V}^*,\tilde{V}_{\mathrm{f}}^*$. 
Compared to~\eqref{eq:LP_terminal_pdf}, in this LP we dropped the constraints~\eqref{eq:LP_terminal_pdf_V_bound} for $k\in\mathbb{I}_{[1,N-1]}$, the constraint~\eqref{eq:LP_terminal_pdf_V_next_bound} for $k=N$,  the constraint~\eqref{eq:LP_terminal_pdf_Vf_bound}, and the non-negativity constraints~\eqref{eq:LP_terminal_pdf_nonneg}, and plugged in $\tilde{\ell}_0=1$. 
Hence, we have $\alpha_{N,\mathrm{f}}\geq \hat{\alpha}_{N,\mathrm{f}}$. \\
\textbf{Part I: }First, we show that there exists a minimizer that satisfies~\eqref{eq:LP_terminal_pdf_reduced_V_bound} with equality. 
Consider $\tilde{\ell}_1=\tilde{\ell}_1^*+c_1$, $\tilde{\ell}_k=\tilde{\ell}_k^*$, $k\in\mathbb{I}_{[1,N-1]}$, $\tilde{V}_{\mathrm{f}}=\tilde{V}_{\mathrm{f}}^*$, $\tilde{V}=\tilde{V}^*-c_1$, with $c_1\geq 0$ such that Inequality~\eqref{eq:LP_terminal_pdf_reduced_V_bound} holds with equality.  
This candidate satisfies Inequalities~\eqref{eq:LP_terminal_pdf_reduced_V_next_bound}--\eqref{eq:LP_terminal_pdf_reduced_Vf_next_bound}
since $N\geq 1$ implies $\gamma_{N,\mathrm{f}}\geq 1$. 
The corresponding cost remains unchanged, i.e., $\tilde{\ell}_1-\tilde{V}=\tilde{\ell}_1^*-\tilde{V}^*$. 
Hence, we have constructed a minimizer satisfying~\eqref{eq:LP_terminal_pdf_reduced_V_bound} with equality. \\
\textbf{Part II: }
Given that there exists a minimizer that satisfies~\eqref{eq:LP_terminal_pdf_reduced_V_bound}  with equality, we get the following equivalent LP:
\begin{subequations}
\label{eq:LP_terminal_pdf_reduced2}
\begin{align}
\hat{\alpha}_{N,\mathrm{f}}:=&\min_{\tilde{\ell},\tilde{V}}\gamma_{N,\mathrm{f}}-\tilde{V}\\
\mathrm{s.t. ~}
\label{eq:LP_terminal_pdf_reduced2_V_bound}
&\tilde{V}\leq \sum_{j=1}^{k-1}\tilde{\ell}_j+\gamma_{N-k+1,\mathrm{f}}\tilde{\ell}_k,~k\in\mathbb{I}_{[1,N-1]},\\
\label{eq:LP_terminal_pdf_reduced2_Vf_next_bound}
&\tilde{V}\leq -\epsilon_{\mathrm{f}}\sum_{j=1}^{N-1}\tilde{\ell}_j+(1+\epsilon_{\mathrm{f}})(\gamma_{N,\mathrm{f}}-1).
\end{align}
\end{subequations}
Suppose a minimizer satisfies Inequality~\eqref{eq:LP_terminal_pdf_reduced2_Vf_next_bound} strictly. 
Then, there exists a small enough constants $c_1>0$, such that $\tilde{\ell}_1=\tilde{\ell}_1^*+c_1$, $\tilde{\ell}_k=\tilde{\ell}_k^*$, $k\in\mathbb{I}_{[1,N-1]}$, $\tilde{V}_{\mathrm{f}}=\tilde{V}_{\mathrm{f}}^*$, $\tilde{V}=\tilde{V}^*-c_1$ satisfies Inequality~\eqref{eq:LP_terminal_pdf_reduced2_Vf_next_bound}. 
This candidate also satisfies Inequalities~\eqref{eq:LP_terminal_pdf_reduced2_V_bound} using  $\gamma_{N,\mathrm{f}}\geq 1$.
Furthermore, this candidate has a strictly smaller cost, resulting in a contradiction. 
Hence, Inequality~\eqref{eq:LP_terminal_pdf_reduced2_Vf_next_bound} holds with equality and the minimum satisfies
\begin{align}
\label{eq:LP_terminal_pdf_temp0}
\hat{\alpha}_{N,\mathrm{f}}=
\gamma_{N,\mathrm{f}}-\tilde{V}^*=1-\epsilon_{\mathrm{f}}(\gamma_{N,\mathrm{f}}-1)+\epsilon_{\mathrm{f}}\sum_{j=1}^{N-1}\tilde{\ell}^*_j.
\end{align}
\textbf{Part III: }
By $\overline{\ell}$ we denote the solution satisfying~\eqref{eq:LP_terminal_pdf_reduced2_V_bound} with equality for all $k\in\mathbb{I}_{[1,N-1]}$. 
Since any minimizer also satisfies Inequalities~\eqref{eq:LP_terminal_pdf_reduced2_V_bound}, we have
\begin{align*}
\sum_{j=1}^{k-1}(\overline{\ell}_j-\tilde{\ell}_j^*)+\gamma_{N-k+1,\mathrm{f}}(\overline{\ell}_k-\tilde{\ell}_k^*)\leq 
0,\quad k\in\mathbb{I}_{[1,N-1]}.
\end{align*}
Using Lemma~\ref{lemma:sum_nonnegative_app} from Appendix~\ref{sec:app_4} and $\gamma_{k,\mathrm{f}}\geq 1$ recursively, this implies 
$\sum_{j=1}^{N-1}\overline{\ell}_j-\tilde{\ell}_j^*\leq 0$. 
Considering the cost~\eqref{eq:LP_terminal_pdf_temp0}, this implies that the solution $\overline{\ell}$ satisfying the Inequalities~\eqref{eq:LP_terminal_pdf_reduced2_V_bound} with equality is a minimizer. \\
\textbf{Part IV: }
Given that Inequalities~\eqref{eq:LP_terminal_pdf_reduced2_V_bound} hold with equality for $k\in\mathbb{I}_{[1,N-1]}$, we have 
\begin{align*}
\overline{\ell}_{k+1}=\dfrac{\gamma_{N-k+1,\mathrm{f}}-1}{\gamma_{N-k,\mathrm{f}}}\overline{\ell}_k,\quad k\in\mathbb{I}_{[1,N-2]}.
\end{align*}
Using this recursively, we get 
\begin{align}
\label{eq:analytic_terminal_pdf_recursion}
\overline{\ell}_k=\prod_{j=1}^{k-1}\dfrac{\gamma_{N-j+1,\mathrm{f}}-1}{\gamma_{N-j,\mathrm{f}}}\overline{\ell}_1, k\in\mathbb{I}_{[1,N-1]}.
\end{align}
Given that Inequality~\eqref{eq:LP_terminal_pdf_reduced2_V_bound} for $k=N-1$ and  Inequality~\eqref{eq:LP_terminal_pdf_reduced2_Vf_next_bound} hold with equality, we have
\begin{align}
\label{eq:LP_terminal_pdf_temp1}
\sum_{k=1}^{N-1}\overline{\ell}_k+\dfrac{\gamma_{2,\mathrm{f}}-1}{1+\epsilon_{\mathrm{f}}}\overline{\ell}_{N-1}
=\gamma_{N,\mathrm{f}}-1.
\end{align}
Equation~\eqref{eq:analytic_terminal_pdf_recursion} ensures that the following equality holds
\begin{align}
\label{eq:LP_terminal_pdf_temp2}
&\prod_{j=1}^{N-2}\gamma_{N-j,\mathrm{f}}\sum_{k=1}^{N-1}\dfrac{\overline{\ell}_k}{\overline{\ell}_1}
=\sum_{k=1}^{N-1}\prod_{j=1}^{k-1}(\gamma_{N-j+1,\mathrm{f}}-1)\prod_{j=k+1}^{N-1}\gamma_{N-j+1,\mathrm{f}}\nonumber\\
=&\prod_{j=1}^{N-1}\gamma_{N-j+1,\mathrm{f}}-\prod_{j=1}^{N-1}(\gamma_{N-j+1,\mathrm{f}}-1),
\end{align}
where the last equality uses the induction proof in~\cite[Lemma~10.2]{grune2010analysis} with $\delta_{N-1-j}=-\gamma_{N-j+1,\mathrm{f}}$. 
Plugging this expression into Equation~\eqref{eq:LP_terminal_pdf_temp1} and using the expression for $\overline{\ell}_{N-1}$ from~\eqref{eq:analytic_terminal_pdf_recursion} yields
\begin{align}
\label{eq:LP_terminal_pdf_temp3}
&\overline{\ell}_1
\left(
\prod_{j=1}^{N-1}\gamma_{N-j+1,\mathrm{f}}-\dfrac{\epsilon_{\mathrm{f}}}{1+\epsilon_{\mathrm{f}}}\prod_{j=1}^{N-1}(\gamma_{N-j+1,\mathrm{f}}-1)
\right)\\
=&(\gamma_{N,\mathrm{f}}-1)\prod_{j=1}^{N-2}\gamma_{N-j,\mathrm{f}}. \nonumber
\end{align}
Thus, the minimum satisfies
\begin{align*}
&1-\dfrac{1-\hat{\alpha}_{N,\mathrm{f}}}{(\gamma_{N,\mathrm{f}}-1)\epsilon_{\mathrm{f}}}
\stackrel{\eqref{eq:LP_terminal_pdf_temp0}}{=}\dfrac{1}{\gamma_{N,\mathrm{f}}-1}\sum_{j=1}^{N-1}\overline{\ell}_j\\
\stackrel{\eqref{eq:analytic_terminal_pdf_recursion}}{=}&\dfrac{1}{\gamma_{N,\mathrm{f}}-1}\sum_{k=1}^{N-1}\prod_{j=1}^{k-1}\dfrac{\gamma_{N-j+1,\mathrm{f}}-1}{\gamma_{N-j,\mathrm{f}}}\overline{\ell}_1\\
\stackrel{\eqref{eq:LP_terminal_pdf_temp3}}{=}&\dfrac{\sum_{k=1}^{N-1}\prod_{j=1}^{k-1}(\gamma_{N-j+1,\mathrm{f}}-1)
\prod_{j=k+1}^{N-1}\gamma_{N-j+1,\mathrm{f}}}{
\prod_{j=1}^{N-1}\gamma_{N-j+1,\mathrm{f}}-\dfrac{\epsilon_{\mathrm{f}}}{1+\epsilon_{\mathrm{f}}}\prod_{j=1}^{N-1}(\gamma_{N-j+1,\mathrm{f}}-1)
}\\
\stackrel{\eqref{eq:LP_terminal_pdf_temp2}}{=}&(1+\epsilon_{\mathrm{f}})\dfrac{\prod_{j=1}^{N-1}\gamma_{N-j+1,\mathrm{f}}-\prod_{j=1}^{N-1}(\gamma_{N-j+1,\mathrm{f}}-1)}{(1+\epsilon_{\mathrm{f}})\prod_{j=1}^{N-1}\gamma_{N-j+1,\mathrm{f}}-\epsilon_{\mathrm{f}}\prod_{j=1}^{N-1}(\gamma_{N-j+1,\mathrm{f}}-1)}\\
=&1-\dfrac{\prod_{j=1}^{N-1}(\gamma_{N-j+1,\mathrm{f}}-1)}{
(1+\epsilon_{\mathrm{f}})\prod_{j=1}^{N-1}\gamma_{N-j+1,\mathrm{f}}-\epsilon_{\mathrm{f}}\prod_{j=1}^{N-1}(\gamma_{N-j+1,\mathrm{f}}-1)
},
\end{align*}
which is equivalent to~\eqref{eq:analytic_terminal_pdf}. \\
\textbf{Part V: }In the following, we show that the minimizer of~\eqref{eq:LP_terminal_pdf_reduced} is also a minimizer of~\eqref{eq:LP_terminal_pdf}, i.e., $\alpha_{N,\mathrm{f}}=\hat{\alpha}_{N,\mathrm{f}}$. 
The non-negativity constraint~\eqref{eq:LP_terminal_pdf_nonneg} for the stage cost $\tilde{\ell}$ holds using the linear relation of $\overline{\ell}_k$ and $\overline{\ell}_1$~\eqref{eq:analytic_terminal_pdf_recursion} and Equation~\eqref{eq:LP_terminal_pdf_temp3} with $\gamma_{N,\mathrm{f}}-1\geq 0$. 
Non-negativity of the terminal cost $\tilde{V}_{\mathrm{f}}$ follows by 
\begin{align*}
&\dfrac{\tilde{V}_{\mathrm{f}}^*}{\gamma_{N,\mathrm{f}}-1}=1-\dfrac{1}{\gamma_{N,\mathrm{f}}-1}\sum_{k=1}^{N-1}\overline{\ell}_j=\dfrac{1-\hat{\alpha}_{N,\mathrm{f}}}{(\gamma_{N,\mathrm{f}}-1)\epsilon_{\mathrm{f}}}\geq 0,
\end{align*}
where the first equality uses that~\eqref{eq:LP_terminal_pdf_reduced_V_bound} holds with equality. 

Consider $\tilde{\ell}_N^*=\tilde{V}^*_{\mathrm{f}}/\underline{c}_{\mathrm{f}}$, which satisfies Inequality~\eqref{eq:LP_terminal_pdf_Vf_bound} by definition. 
Given that Inequality~\eqref{eq:LP_terminal_pdf_Vf_next_bound} holds with equality, Inequality~\eqref{eq:LP_terminal_pdf_V_next_bound} for $k=N$ reduces to $\tilde{\ell}_N^*\geq \dfrac{1+\epsilon_{\mathrm{f}}}{\gamma_{1,\mathrm{f}}}\tilde{V}_{\mathrm{f}}^*$, 
which holds using~\eqref{eq:term_3}. 
Given that Inequality~\eqref{eq:LP_terminal_pdf_reduced_V_bound} holds with equality, satisfaction of Inequality~\eqref{eq:LP_terminal_pdf_V_bound} for some $k\in\mathbb{I}_{[1,N-1]}$ is equivalent to
\begin{align*}
\sum_{j=1}^{k-1}\overline{\ell}_j+\gamma_{N-k,\mathrm{f}}\overline{\ell}_k\geq \gamma_{N,\mathrm{f}}-1.
\end{align*}
Multiplying by $\prod_{j=1}^{k-1}\gamma_{N-j,\mathrm{f}}$ and dividing by $\overline{\ell}_1$, the left hand side yields
\begin{align*}
&\gamma_{N-k+1,\mathrm{f}}\sum_{j=1}^{k-1}\prod_{l=1}^{j-1}(\gamma_{N-l+1,\mathrm{f}}-1)\prod_{l=j+1}^{k-1}\gamma_{N-l+1,\mathrm{f}}\\
&+\gamma_{N-k,\mathrm{f}}\prod_{j=1}^{k-1}(\gamma_{N-j+1,\mathrm{f}}-1)\\
=&\prod_{j=1}^{k}\gamma_{N-j+1,\mathrm{f}}
-(\gamma_{N-k+1,\mathrm{f}}-\gamma_{N-k,\mathrm{f}})\prod_{j=1}^{k-1}(\gamma_{N-j+1,\mathrm{f}}-1),
\end{align*}
where the equality follows similar to~\eqref{eq:LP_terminal_pdf_temp2}. 
The corresponding right hand side is given by
\begin{align*}
&\dfrac{\gamma_{N,\mathrm{f}}-1}{\overline{\ell}_1}\prod_{j=1}^{k-1}\gamma_{N-j,\mathrm{f}}\\
\stackrel{\eqref{eq:LP_terminal_pdf_temp3}}{=}&\left[\prod_{j=1}^{N-1}\gamma_{N-j+1,\mathrm{f}}-\dfrac{\epsilon_{\mathrm{f}}}{1+\epsilon_{\mathrm{f}}}\prod_{j=1}^{N-1}(\gamma_{N-j+1,\mathrm{f}}-1)\right]\dfrac{1}{\prod_{j=k}^{N-2}\gamma_{N-j,\mathrm{f}}}.
\end{align*}
By multiplying with $(1+\epsilon_{\mathrm{f}})\prod_{j=k}^{N-2}\gamma_{N-j,\mathrm{f}}$, the condition can be simplified to 
\begin{align*}
&\epsilon_{\mathrm{f}}\prod_{j=k}^{N-1}(\gamma_{N-j+1,\mathrm{f}}-1)\\
\geq&(1+\epsilon_{\mathrm{f}})(\gamma_{N-k+1,\mathrm{f}}-\gamma_{N-k,\mathrm{f}})\prod_{j=k+1}^{N-1}\gamma_{N-j+1,\mathrm{f}}.
\end{align*} 
Using the formula~\eqref{eq:submult_pdf_term_12}, this can be written equivalently as 
\begin{align}
\label{eq:iff_condition_pdf_term}
&\epsilon_{\mathrm{f}}\left[\sum_{l=0}^{N-k}c_l+c_{N-k+1,\mathrm{f}}\right]\prod_{j=2}^{N-k}(\gamma_{j,\mathrm{f}}-1)\nonumber\\
\geq&(1+\epsilon_{\mathrm{f}})(\gamma_{N-k+1,\mathrm{f}}-\gamma_{N-k,\mathrm{f}})\prod_{j=2}^{N-k}\gamma_{j,\mathrm{f}}.
\end{align} 
Inequality~\eqref{eq:iff_condition_pdf_term} holds using $c_0\geq 0$ and Inequality~\eqref{eq:lemma_submult_induction_pdf} from Lemma~\ref{lemma:submult_induction_pdf} in Appendix~\ref{sec:app_4} for $l=0$. \\
\textbf{Part VI: }
Using $\gamma_{k,\mathrm{f}}=\overline{\gamma}_{\mathrm{f}}$,  $k\in\mathbb{I}_{[1,N]}$, we have $\alpha_{N,\mathrm{f}}=\hat{\alpha}_{N,\mathrm{f}}>0$ with~\eqref{eq:analytic_terminal_pdf} if
\begin{align*}
\epsilon_{\mathrm{f}}\overline{\gamma}_{\mathrm{f}}(\overline{\gamma}_{\mathrm{f}}-1)^{N-1}<(1+\epsilon_{\mathrm{f}})\overline{\gamma}_{\mathrm{f}}^{N-1}.
\end{align*}
Applying the logarithm yields $\underline{N}_{\mathrm{f}}$ in Equation~\eqref{eq:hat_alpha_explicit_N_terminal_pdf}. \qed

\subsubsection{Proof of Theorem~\ref{thm:analytic_terminal_sigma_W}} 
\label{app:analytic_terminal_sigma_W}
The constraints~\eqref{eq:LP_terminal_W_bound} are trivially satisfied with equality using  $\underline{\gamma}_{\mathrm{o}}=\overline{\gamma}_{\mathrm{o}}=1$, i.e., $\tilde{W}_k=\tilde{\sigma}_k$, $k\in\mathbb{I}_{[0,N]}$.
Hence, the LP~\eqref{eq:LP_terminal} reduces to the following LP: 
\begin{subequations}
\label{eq:LP_terminal_detect}
\begin{align}
\label{eq:LP_terminal_detect_cost}
&(\alpha_{N,\mathrm{f}}-1)\epsilon_{\mathrm{o}}\\
:=&\min_{\tilde{\ell},\tilde{\sigma},\tilde{V},\tilde{V}_{\mathrm{f}}}
\sum_{k=1}^{N-1}\tilde{\ell}_k+\tilde{V}_{\mathrm{f}}-\tilde{V}\nonumber\\
\label{eq:LP_terminal_detect_nonneg_1}
\mathrm{s.t. ~}&\tilde{\sigma}_0=1,~\tilde{\ell}_k\geq 0,~k\in\mathbb{I}_{[0,N-1]},~\tilde{V}_{\mathrm{f}}\geq 0,\\
\label{eq:LP_terminal_detect_nonneg_2}
&\tilde{\sigma}_k\geq 0,~k\in\mathbb{I}_{[0,N]},\\
\label{eq:LP_terminal_detect_W_decrease}
&\tilde{\sigma}_{k+1}\leq \eta\tilde{\sigma}_k+\tilde{\ell}_k,~k\in\mathbb{I}_{[0,N-1]},\\
\label{eq:LP_terminal_detect_V_bound}
&\sum_{j=k}^{N-1}\tilde{\ell}_j+\tilde{V}_{\mathrm{f}}\leq \gamma_{N-k,\mathrm{f}}\tilde{\sigma}_k,~k\in\mathbb{I}_{[0,N-1]},\\
\label{eq:LP_terminal_detect_V_next_bound}
&\tilde{V}\leq \sum_{j=1}^{k-1}\tilde{\ell}_j+\gamma_{N-k+1,\mathrm{f}}\tilde{\sigma}_k,~k\in\mathbb{I}_{[1,N]},\\
\label{eq:LP_terminal_detect_Vf_next_bound}
&\tilde{V}\leq \sum_{j=1}^{N-1}\tilde{\ell}_j+(1+\epsilon_{\mathrm{f}})\tilde{V}_{\mathrm{f}},\\
\label{eq:LP_terminal_detect_Vf_bound}
&\underline{c}_{\mathrm{f}}\tilde{\sigma}_N\leq\tilde{V}_{\mathrm{f}}\leq \overline{c}_{\mathrm{f}}\tilde{\sigma}_N. 
\end{align}
\end{subequations}
Consider the following simpler LP: 
\begin{subequations}
\label{eq:LP_terminal_detect_reduced}
\begin{align}
\label{eq:LP_terminal_detect_reduced_cost}
&(\hat{\alpha}_{N,\mathrm{f}}-1)\epsilon_{\mathrm{o}}\\
:=&\min_{\tilde{\ell},\tilde{\sigma},\tilde{V},\tilde{V}_{\mathrm{f}}}
\sum_{k=1}^{N-1}\tilde{\ell}_k+\tilde{V}_{\mathrm{f}}-\tilde{V}\nonumber\\
\mathrm{s.t. ~}&\tilde{\sigma}_0=1,\\
\label{eq:LP_terminal_detect_reduced_W_decrease}
&\tilde{\sigma}_{k+1}\leq \eta\tilde{\sigma}_k+\tilde{\ell}_k,~k\in\mathbb{I}_{[0,N-1]},\\
\label{eq:LP_terminal_detect_reduced_V_bound}
&\sum_{j=0}^{N-1}\tilde{\ell}_j+\tilde{V}_{\mathrm{f}}\leq \gamma_{N,\mathrm{f}},\\
\label{eq:LP_terminal_detect_reduced_V_next_bound}
&\tilde{V}\leq \sum_{j=1}^{k-1}\tilde{\ell}_j+\gamma_{N-k+1,\mathrm{f}}\tilde{\sigma}_k,~k\in\mathbb{I}_{[1,N]},\\
\label{eq:LP_terminal_detect_reduced_Vf_next_bound}
&\tilde{V}\leq \sum_{j=1}^{N-1}\tilde{\ell}_j+(1+\epsilon_{\mathrm{f}})\tilde{V}_{\mathrm{f}}.
\end{align}
\end{subequations}
Denote a minimizer to~\eqref{eq:LP_terminal_detect_reduced} by $\tilde{\ell}^*,\tilde{\sigma}^*,\tilde{V}^*,\tilde{V}_{\mathrm{f}}^*$. 
Compared to~\eqref{eq:LP_terminal_detect}, we dropped the constraints~\eqref{eq:LP_terminal_detect_V_bound} for $k\in\mathbb{I}_{[1,N-1]}$,  the constraint~\eqref{eq:LP_terminal_detect_Vf_bound}, and the non-negativity constraints~\eqref{eq:LP_terminal_detect_nonneg_1}--\eqref{eq:LP_terminal_detect_nonneg_2}. 
Hence, any minimizer to the LP~\eqref{eq:LP_terminal_detect} is a feasible candidate to the LP~\eqref{eq:LP_terminal_detect_reduced} and thus we have $\alpha_{N,\mathrm{f}}\geq \hat{\alpha}_{N,\mathrm{f}}$. 
In the following, we first derive the analytical solution to~\eqref{eq:LP_terminal_detect_reduced} and then show $\hat{\alpha}_{N,\mathrm{f}}=\alpha_{N,\mathrm{f}}$. \\
\textbf{Part I: }
First, we show that any minimizer satisfies~\eqref{eq:LP_terminal_detect_reduced_W_decrease} with equality. 
For contradiction, suppose there exists a $k' \in\mathbb{I}_{[0,N-1]}$, such that 
$\tilde{\sigma}^*_{k'+1}+c_1\leq\eta\tilde{\sigma}_{k'}^*+\tilde{\ell}_{k'}^*$ with some $c_1>0$.
Then, we can choose $\tilde{\sigma}_k=\tilde{\sigma}_k^*+c_1\eta^{k-1-k'}$, $k\in\mathbb{I}_{[k'+1,N-1]}$, $\tilde{\sigma}_k=\tilde{\sigma}^*_k$, $k\in\mathbb{I}_{[0,k']}$, $\tilde{\ell}_k=\tilde{\ell}^*_k$, $k\in\mathbb{I}_{[0,N-2]}$, which satisfies~\eqref{eq:LP_terminal_detect_reduced_W_decrease} for $k\in\mathbb{I}_{[0,N-2]}$. 
The constraint~\eqref{eq:LP_terminal_detect_reduced_W_decrease} for $k=N-1$ remains valid with $\tilde{\sigma}_N=\tilde{\sigma}_N^*+c_2\dfrac{1}{\gamma_{1,\mathrm{f}}+1}$, 
$\tilde{\ell}_{N-1}=\tilde{\ell}_{N-1}^*-c_2\dfrac{\gamma_{1,\mathrm{f}}}{\gamma_{1,\mathrm{f}}+1}$, 
 with $c_2=c_1\eta^{N-1-k'}>0$.
The constraints~\eqref{eq:LP_terminal_detect_reduced_V_next_bound}, $k\in\mathbb{I}_{[1,N-1]}$ remains valid with $\tilde{V}=\tilde{V}^*$ since $\tilde{\sigma}_k\geq \tilde{\sigma}_k^*$, $k\in\mathbb{I}_{[1,N-1]}$ and $\tilde{\ell}_k=\tilde{\ell}_k^*$, $k\in\mathbb{I}_{[0,N-2]}$. 
For $k=N$, the constraint~\eqref{eq:LP_terminal_detect_reduced_V_next_bound} remains valid because the increase in $\gamma_{1,\mathrm{f}}\tilde{\sigma}_{N}$ and the decrease in $\tilde{\ell}_{N-1}$ exactly cancels. 
The constraint~\eqref{eq:LP_terminal_detect_reduced_Vf_next_bound} holds by choosing $\tilde{V}_{\mathrm{f}}=\tilde{V}_{\mathrm{f}}^*-\dfrac{\tilde{\ell}_{N-1}-\tilde{\ell}_{N-1}^*}{1+\epsilon_{\mathrm{f}}}$, which satisfies
$\tilde{V}_{\mathrm{f}}+\tilde{\ell}_{N-1}-\tilde{V}_{\mathrm{f}}^*-\tilde{\ell}_{N-1}^*=\epsilon_{\mathrm{f}}\dfrac{\tilde{\ell}_{N-1}-\tilde{\ell}_{N-1}^*}{1+\epsilon_{\mathrm{f}}}<0$. 
Thus, the constraint~\eqref{eq:LP_terminal_detect_reduced_V_bound} holds and we have a strictly smaller cost.
Hence, Inequalities~\eqref{eq:LP_terminal_detect_reduced_W_decrease} hold with equality and using $\tilde{\sigma}_0=1$,  we can recursively compute: 
\begin{align}
\label{eq:LP_sigma_W_expl_sigma}
\tilde{\sigma}_k=\eta^k+\sum_{j=0}^{k-1}\eta^{k-1-j}\tilde{\ell}_j.
\end{align}
\textbf{Part II: }
 Next, we show that any minimizer satisfies~\eqref{eq:LP_terminal_detect_reduced_Vf_next_bound} with equality. 
For contradiction, suppose $\tilde{V}^*+c_1\leq\sum_{j=1}^{N-1}\tilde{\ell}_j^*+(1+\epsilon_{\mathrm{f}})\tilde{V}_{\mathrm{f}}^*$ with $c_1>0$. 
Consider $\tilde{\ell}=\tilde{\ell}^*$, $\tilde{V}=\tilde{V}^*$, $\tilde{\sigma}=\tilde{\sigma}^*$, $\tilde{V}_{\mathrm{f}}= \tilde{V}^*_{\mathrm{f}}-\frac{c_1}{1+\epsilon_{\mathrm{f}}}$. 
Inequalities~\eqref{eq:LP_terminal_detect_reduced_W_decrease} and \eqref{eq:LP_terminal_detect_reduced_V_next_bound} are trivially satisfied. 
Inequality~\eqref{eq:LP_terminal_detect_reduced_V_bound} holds since $\tilde{V}_{\mathrm{f}}\leq\tilde{V}^*_{\mathrm{f}}$. 
Inequality~\eqref{eq:LP_terminal_detect_reduced_Vf_next_bound} remains valid since the decrease in $\tilde{V}_{\mathrm{f}}$ is chosen sufficiently small. 
Thus, we have a feasible solution with strictly smaller cost, contradicting optimality. \\
\textbf{Part III: }
Next, we show that any minimizer satisfies~\eqref{eq:LP_terminal_detect_reduced_V_bound} with equality. 
For contradiction, suppose that
$c_0+\sum_{j=0}^{N-1}\tilde{\ell}^*_j+\tilde{V}_{\mathrm{f}}^*\leq \gamma_{N,\mathrm{f}}$, 
with $c_0>0$. 
Define $c_1=\dfrac{1+\epsilon_{\mathrm{f}}}{1+\epsilon_{\mathrm{f}}+\min_{k\in\mathbb{I}_{[1,N]}}\eta^{k-1}\gamma_{N-k+1,\mathrm{f}}}c_0>0$, $c_2=c_0-c_1>0$. 
Consider $\tilde{\ell}_0=\tilde{\ell}_0^*+c_1$, $\tilde{\ell}_k=\tilde{\ell}_k^*$, $k\in\mathbb{I}_{[1,N-1]}$, $\tilde{V}_{\mathrm{f}}=\tilde{V}_{\mathrm{f}}^*+c_2$, which also satisfies~\eqref{eq:LP_terminal_detect_reduced_V_bound}. 
The constraints~\eqref{eq:LP_terminal_detect_reduced_W_decrease} remain valid if we choose $\tilde{\sigma}_k=\tilde{\sigma}_k^*+c_1\eta^{k-1}$, $k\in\mathbb{I}_{[1,N]}$, which implies $\tilde{\sigma}_k>\tilde{\sigma}_k^*$, $k\in\mathbb{I}_{[1,N]}$. 
The constraint~\eqref{eq:LP_terminal_detect_reduced_Vf_next_bound} remains valid if we choose $\tilde{V}=\tilde{V}^*+(1+\epsilon_{\mathrm{f}})c_2$. 
The constraints~\eqref{eq:LP_terminal_detect_reduced_V_next_bound} remain valid since $\tilde{V}-\tilde{V}^*=(1+\epsilon_{\mathrm{f}})c_2\leq c_1\eta^{k-1}\gamma_{N-k+1,\mathrm{f}}$, $k\in\mathbb{I}_{[1,N]}$. 
The corresponding cost is strictly smaller with $\tilde{V}_{\mathrm{f}}-\tilde{V}-(\tilde{V}_{\mathrm{f}}^*-\tilde{V}^*)=-\epsilon_{\mathrm{f}}c_2<0$, thus contradicting optimality. \\
\textbf{Part IV: }
Given that Inequality~\eqref{eq:LP_terminal_detect_reduced_V_bound} holds with equality, we can substitute $\tilde{V}_{\mathrm{f}}=\gamma_{N,\mathrm{f}}-\sum_{j=0}^{N-1}\tilde{\ell}_j$.
Thus, the objective is equivalent to 
\begin{align*}
\sum_{k=1}^{N-1}\tilde{\ell}_k+\tilde{V}_{\mathrm{f}}-\tilde{V}=\gamma_{N,\mathrm{f}}-(\tilde{\ell}_0+\tilde{V})=\gamma_{N,\mathrm{f}}-\tilde{U},
\end{align*}
with a modified decision variable $\tilde{U}:=\tilde{V}+\tilde{\ell}_0$. 
Using additionally the explicit expression for $\tilde{\sigma}_k$ from Equation~\eqref{eq:LP_sigma_W_expl_sigma}, the LP~\eqref{eq:LP_terminal_detect_reduced} is equivalent to
\begin{subequations}
\label{eq:LP_terminal_detect_v2_reduced}
\begin{align}
\label{eq:LP_terminal_detect_v2_reduced_cost}
&\min_{\tilde{\ell},\tilde{U}}\gamma_{N,\mathrm{f}}-\tilde{U}\\
\mathrm{s.t. ~}
\label{eq:LP_terminal_detect_v2_reduced_V_next_bound}
&\tilde{U}\leq \sum_{j=0}^{k-1}(1+\gamma_{N-k+1,\mathrm{f}}\eta^{k-1-j})\tilde{\ell}_j+\gamma_{N-k+1,\mathrm{f}}\eta^k,\\
&k\in\mathbb{I}_{[1,N]},\nonumber\\
\label{eq:LP_terminal_detect_v2_reduced_Vf_next_bound}
&\tilde{U}\leq (1+\epsilon_{\mathrm{f}})\gamma_{N,\mathrm{f}}-\epsilon_{\mathrm{f}}\sum_{j=0}^{N-1}\tilde{\ell}_j.
\end{align}
\end{subequations}
Next, we show that for $k=1$ the constraint~\eqref{eq:LP_terminal_detect_v2_reduced_V_next_bound} is satisfied with equality. 
For contradiction, suppose
$\tilde{U}^*< (1+\gamma_{N,\mathrm{f}})\tilde{\ell}_0^*+\gamma_{N,\mathrm{f}}\eta.$ 
Define $c_{0,1}:=\min_{k\in\mathbb{I}_{[1,N-1]}}\dfrac{1+\gamma_{N-k+1,\mathrm{f}}\eta^{k-1}}{1+\gamma_{N-k+1,\mathrm{f}}\eta^{k-2}}>1$. 
Consider $\tilde{\ell}_0=\tilde{\ell}_0^*+c_1$, $\tilde{\ell}_1=\tilde{\ell}_1^*-c_1\sqrt{c_{0,1}}$, $\tilde{\ell}_k=\tilde{\ell}_k^*$, $k\in\mathbb{I}_{[2,N-1]}$, $\tilde{U}=\tilde{U}^*$ with some $c_1>0$. 
For $k=1$, the constraint~\eqref{eq:LP_terminal_detect_v2_reduced_V_next_bound} is strictly satisfies if $c_1>0$ is chosen sufficiently small.
For $k\in\mathbb{I}_{[2,N]}$, the constraints~\eqref{eq:LP_terminal_detect_v2_reduced_V_next_bound} are strictly satisfied by
\begin{align*}
&(1+\gamma_{N-k+1,\mathrm{f}}\eta^{k-1})c_1-(1+\gamma_{N-k+1,\mathrm{f}}\eta^{k-2})c_1\sqrt{c_{0,1}}\\
\geq& (1+\gamma_{N-k+1,\mathrm{f}}\eta^{k-2})(c_{0,1}-\sqrt{c_{0,1}})c_1>0.
\end{align*}
The constraint~\eqref{eq:LP_terminal_detect_v2_reduced_Vf_next_bound} is strictly satisfied by
$\epsilon_{\mathrm{f}}c_1(\sqrt{c}_{0,1}-1)>0$. 
Hence, we can increase $\tilde{U}$, resulting in a feasible candidate solution with a smaller cost, thus contradicting optimality. \\
\textbf{Part V: }
Given that for $k=1$ Inequality~\eqref{eq:LP_terminal_detect_v2_reduced_V_next_bound} holds with equality, we can substitute $\tilde{U}=(1+\gamma_{N,\mathrm{f}})\tilde{\ell}_0+\gamma_{N,\mathrm{f}}\eta$. 
Furthermore, from the Part II of the proof, we know that Inequality~\eqref{eq:LP_terminal_detect_v2_reduced_Vf_next_bound} holds with equality. 
Thus, we can pose the following equivalent LP:
\begin{subequations}
\label{eq:LP_terminal_detect_v3_reduced}
\begin{align}
\label{eq:LP_terminal_detect_v3_reduced_cost}
&\min_{\tilde{\ell}}\gamma_{N,\mathrm{f}}(1-\eta)-(1+\gamma_{N,\mathrm{f}})\tilde{\ell}_0\\
\mathrm{s.t. ~}
\label{eq:LP_terminal_detect_v3_reduced_V_next_bound}
& (\gamma_{N,\mathrm{f}}-\gamma_{N-k+1,\mathrm{f}}\eta^{k-1})(\tilde{\ell}_0+\eta)\\
&\leq \sum_{j=1}^{k-1}(1+\gamma_{N-k+1,\mathrm{f}}\eta^{k-1-j})\tilde{\ell}_j,~k\in\mathbb{I}_{[2,N]},\nonumber\\
\label{eq:LP_terminal_detect_v3_reduced_Vf_next_bound}
&\epsilon_{\mathrm{f}}\sum_{j=1}^{N-1}\tilde{\ell}_j+(1+\epsilon_{\mathrm{f}}+\gamma_{N,\mathrm{f}})\tilde{\ell}_0= (1-\eta+\epsilon_{\mathrm{f}})\gamma_{N,\mathrm{f}}.
\end{align}
\end{subequations}
In the following, we show that there exists a minimizer satisfying~\eqref{eq:LP_terminal_detect_v3_reduced_V_next_bound} with equality for $k\in\mathbb{I}_{[2,N]}$.
Denote the solution satisfying~\eqref{eq:LP_terminal_detect_v3_reduced_V_next_bound} with equality by $\overline{\ell}_k$. 
Given that the minimizers also satisfies the Inequalities~\eqref{eq:LP_terminal_detect_v3_reduced_V_next_bound}, we have
\begin{align*}
&\sum_{j=1}^{k-1}(1+\gamma_{N-k+1,\mathrm{f}}\eta^{k-1-j})\overline{\ell}_k\nonumber\\
=&(\gamma_{N,\mathrm{f}}-\gamma_{N-k+1,\mathrm{f}}\eta^{k-1})(\tilde{\ell}^*_0+\eta)\nonumber\\
\leq&\sum_{j=1}^{k-1}(1+\gamma_{N-k+1,\mathrm{f}}\eta^{k-1-j})\tilde{\ell}_j^*,~k\in\mathbb{I}_{[2,N]}.
\end{align*}
This is equivalent to
\begin{align*}
\sum_{j=1}^{k-1}\underbrace{(1+\gamma_{N-k+1,\mathrm{f}}\eta^{k-1-j})}_{=:\epsilon_{j,k}}(\overline{\ell}_j-\tilde{\ell}^*_j)\leq 0,~k\in\mathbb{I}_{[2,N]},
\end{align*}
with $\epsilon_{j,k}$ positive and non-decreasing (in $j$). 
Applying Lemma~\ref{lemma:sum_nonnegative_app} from Appendix~\ref{sec:app_3} recursively, this implies
$\sum_{j=1}^{k-1}\overline{\ell}_j-\tilde{\ell}_j^*\leq 0$, $k\in\mathbb{I}_{[2,N]}$. 
Using this inequality for $k=N$, Equality~\eqref{eq:LP_terminal_detect_v3_reduced_Vf_next_bound} ensures $\overline{\ell}_0\geq \tilde{\ell}_0^*$, 
i.e., $\overline{\ell}$ is a minimizer and in the following we consider $\tilde{\ell}^*=\overline{\ell}$. \\
\textbf{Part VI: }
Given that Inequalities~\eqref{eq:LP_terminal_detect_v3_reduced_V_next_bound} hold with equality,  we can use Lemma~\ref{lemma:analytic_ab_app} from Appendix~\ref{sec:app_3} with $\delta_l=\gamma_{l,\mathrm{f}}$, which implies $\overline{\ell}_k=a_k(\tilde{\ell}_0+\eta)$, $k\in\mathbb{I}_{[1,N-1]}$ with $a_k$ according to Equation~\eqref{eq:ab_explicit_app}. 
Thus, Equation~\eqref{eq:LP_terminal_detect_v3_reduced_Vf_next_bound} yields
\begin{align*}
&(1-\eta+\epsilon_{\mathrm{f}})\gamma_{N,\mathrm{f}}\\
=&\epsilon_{\mathrm{f}}\sum_{j=1}^{N-1}\tilde{\ell}^*_j+(1+\epsilon_{\mathrm{f}}+\gamma_{N,\mathrm{f}})\tilde{\ell}^*_0\\
=&\tilde{\ell}^*_0\left(1+\epsilon_{\mathrm{f}}+\gamma_{N,\mathrm{f}}+\epsilon_{\mathrm{f}}\sum_{j=1}^{N-1}a_j\right)+\epsilon_{\mathrm{f}}\eta\sum_{j=1}^{N-1}a_j.
\end{align*}
Lemma~\ref{lemma:induction_formula_app} from Appendix~\ref{sec:app_3} with $\delta_l=\gamma_{l,\mathrm{f}}$ yields
\begin{align}
\label{eq:analytic_a_N}
&\sum_{k=1}^{N-1}a_k\stackrel{\eqref{eq:ab_explicit_app}}{=}\sum_{k=1}^{N-1}\dfrac{\gamma_{N-k+1,\mathrm{f}}-\eta\gamma_{N-k,\mathrm{f}}}{\gamma_{N-k,\mathrm{f}}+1}\prod_{j=0}^{k-2}\dfrac{\eta+\gamma_{N-j,\mathrm{f}}}{\gamma_{N-j-1,\mathrm{f}}+1}\nonumber\\
\stackrel{\eqref{eq:induction_formula_app_a}}{=}&\gamma_{N,\mathrm{f}}-\gamma_{1,\mathrm{f}}\prod_{j=0}^{N-2}\dfrac{\eta+\gamma_{N-j,\mathrm{f}}}{1+\gamma_{N-j-1,\mathrm{f}}}=:\gamma_{N,\mathrm{f}}-\overline{a}_N.
\end{align}
Thus, the minimizer of~\eqref{eq:LP_terminal_detect_v3_reduced} satisfies
\begin{align}
\label{eq:LP_terminal_detect_minimizer_0}
&\tilde{\ell}_0^*= \dfrac{(1-\eta)(1+\epsilon_{\mathrm{f}})\gamma_{N,\mathrm{f}}+\epsilon_{\mathrm{f}}\eta\overline{a}_N}{(1+\epsilon_{\mathrm{f}})(1+\gamma_{N,\mathrm{f}})-\epsilon_{\mathrm{f}}\overline{a}_N}.
\end{align}
Hence, the minimum to~\eqref{eq:LP_terminal_detect_v3_reduced} (and equally to~\eqref{eq:LP_terminal_detect_reduced}) 
satisfies
\begin{align*}
&(\hat{\alpha}_{N,\mathrm{f}}-1)\epsilon_{\mathrm{o}}\\
\stackrel{\eqref{eq:LP_terminal_detect_v3_reduced_cost}}{=}&\gamma_{N,\mathrm{f}}(1-\eta)-(1+\gamma_{N,\mathrm{f}})\tilde{\ell}_0^*\\
\stackrel{\eqref{eq:LP_terminal_detect_minimizer_0}}{=}&\gamma_{N,\mathrm{f}}(1-\eta)-(1+\gamma_{N,\mathrm{f}}) \dfrac{(1-\eta)(1+\epsilon_{\mathrm{f}})\gamma_{N,\mathrm{f}}+\epsilon_{\mathrm{f}}\eta\overline{a}_N}{(1+\epsilon_{\mathrm{f}})(1+\gamma_{N,\mathrm{f}})-\epsilon_{\mathrm{f}}\overline{a}_N}\\
=&-\overline{a}_N\epsilon_{\mathrm{f}}\dfrac{\gamma_{N,\mathrm{f}}+\eta}{(1+\epsilon_{\mathrm{f}})(1+\gamma_{N,\mathrm{f}})-\epsilon_{\mathrm{f}}\overline{a}_N}\\
\stackrel{\eqref{eq:analytic_a_N}}{=}&-
\dfrac{\epsilon_{\mathrm{f}}\gamma_{1,\mathrm{f}}(\gamma_{N,\mathrm{f}}+\eta)
\prod_{j=0}^{N-2}(\eta+\gamma_{N-j,\mathrm{f}})}{
(1+\epsilon_{\mathrm{f}})\prod_{j=0}^{N-1}(1+\gamma_{N-j,\mathrm{f}})
-\epsilon_{\mathrm{f}}\gamma_{1,\mathrm{f}}\prod_{j=0}^{N-2}(\eta+\gamma_{N-j,\mathrm{f}})
}.
\end{align*}
\textbf{Part VII: } Next we show the identity  $\alpha_{N,\mathrm{f}}=\hat{\alpha}_{N,\mathrm{f}}$, i.e., that the minimizer to~\eqref{eq:LP_terminal_detect_reduced} is also a minimizer to~\eqref{eq:LP_terminal_detect}, under the additional assumption $\gamma_{k,\mathrm{f}}=\overline{\gamma}_{\mathrm{f}}$ $\forall k\in\mathbb{I}_{\geq 1}$.
To this end, we need to ensure that the minimizer $\tilde{\ell}^*$ satisfies the non-negativity constraints~\eqref{eq:LP_terminal_detect_nonneg_1}
--\eqref{eq:LP_terminal_detect_nonneg_2}, the lower and upper bound~\eqref{eq:LP_terminal_detect_Vf_bound}, and the constraints~\eqref{eq:LP_terminal_detect_V_bound} for $k\in\mathbb{I}_{[1,N-1]}$. 
Non-negativity of the stage cost follows from $\tilde{\ell}^*_0\geq 0$ and $\tilde{\ell}_k^*=a_k(\tilde{\ell}_0^*+\eta)$, $k\in\mathbb{I}_{[1,N-1]}$ for $a_k\geq 0$.
Equation~\eqref{eq:ab_explicit_app} with $\delta_l=\gamma_{l,\mathrm{f}}$ ensures $a_k>0$ due to $\gamma_{k,\mathrm{f}}=\overline{\gamma}_{\mathrm{f}}$ and $\eta\in(0,1)$. 
 Non-negativity of $\tilde{\sigma}^*_k$ follows from the explicit formula~\eqref{eq:LP_sigma_W_expl_sigma}. 
The terminal cost satisfies 
\begin{align*}
(\hat{\alpha}_N-1)\epsilon_{\mathrm{o}}\stackrel{\eqref{eq:LP_terminal_detect_reduced_cost}}{=}
&\sum_{k=1}^{N-1}\tilde{\ell}^*_k+\tilde{V}^*_{\mathrm{f}}-\tilde{V}^*
\stackrel{\eqref{eq:LP_terminal_detect_reduced_Vf_next_bound}}{=}-\epsilon_{\mathrm{f}}\tilde{V}_{\mathrm{f}}^*.
\end{align*}
Thus, non-negativity of the terminal cost $\tilde{V}_{\mathrm{f}}$ follows from $\epsilon_{\mathrm{f}}>0$ and $\hat{\alpha}_{N,\mathrm{f}}\leq 1$.

Given that Inequalities~\eqref{eq:LP_terminal_detect_reduced_V_next_bound} for $k=N$ and Inequality~\eqref{eq:LP_terminal_detect_reduced_Vf_next_bound} hold with equality, we have 
$\tilde{\sigma}_N^*=\dfrac{1+\epsilon_{\mathrm{f}}}{\gamma_{1,\mathrm{f}}}\tilde{V}_{\mathrm{f}}^*$.
Hence, the lower and upper bound~\eqref{eq:LP_terminal_detect_Vf_bound} hold since
$\underline{c}_{\mathrm{f}}\leq \dfrac{\gamma_{1,\mathrm{f}}}{1+\epsilon_{\mathrm{f}}}\leq \overline{c}_{\mathrm{f}}$.
By utilizing the solution~\eqref{eq:LP_sigma_W_expl_sigma} and the fact that Inequality~\eqref{eq:LP_terminal_detect_reduced_V_bound} holds with equality,  conditions~\eqref{eq:LP_terminal_detect_V_bound} are equivalent to
\begin{align*}
\gamma_{N,\mathrm{f}}-\gamma_{N-k,\mathrm{f}}\eta^k\leq\sum_{j=0}^{k-1}(1+\gamma_{N-k,\mathrm{f}}\eta^{k-1-j})\tilde{\ell}_j^*,~k\in\mathbb{I}_{[1,N-1]}.
\end{align*}
Abbreviate $\tilde{\delta}_l:=\dfrac{\eta+\gamma_{N-l,\mathrm{f}}}{1+\gamma_{N-l-1,\mathrm{f}}}$. 
Utilizing the analytical solution from Lemma~\ref{lemma:analytic_ab_app}, the right hand side is equivalent to
\begin{align*}
&(1+\gamma_{N-k,\mathrm{f}}\eta^{k-1})\tilde{\ell}_0^*
+\sum_{j=1}^{k-1}(1+\gamma_{N-k,\mathrm{f}}\eta^{k-1-j})a_j(\tilde{\ell}_0^*+\eta)\\
&\stackrel{\eqref{eq:ab_explicit_app},\eqref{eq:induction_formula_app}}{=}(1+\gamma_{N-k,\mathrm{f}}\eta^{k-1})\tilde{\ell}_0^*+ (\tilde{\ell}_0^*+\eta)\cdot\\
&\left(\gamma_{N,\mathrm{f}}-\gamma_{N-k+1,\mathrm{f}}\prod_{l=0}^{k-2}\tilde{\delta}_l+\gamma_{N-k,\mathrm{f}}\prod_{l=0}^{k-2}\tilde{\delta}_l-\gamma_{N-k,\mathrm{f}}\eta^{k-1}\right).
\end{align*}
Thus, the inequalities can be equivalently  written as
\begin{align*}
&\gamma_{N,\mathrm{f}}+\eta\\\
\leq &
(\tilde{\ell}_0^*+\eta)\left(1+\gamma_{N,\mathrm{f}}-(\gamma_{N-k+1,\mathrm{f}}-\gamma_{N-k,\mathrm{f}})\prod_{l=0}^{k-2}\tilde{\delta}_{l}\right).
\end{align*}
Utilizing $\gamma_{N-k,\mathrm{f}}=\overline{\gamma}_{\mathrm{f}}$ and $\overline{a}_N\geq 0$, this condition holds with
\begin{align*}
\tilde{\ell}_0^*+\eta\stackrel{\eqref{eq:LP_terminal_detect_minimizer_0}}{=}&
 \dfrac{(1+\epsilon_{\mathrm{f}})(\gamma_{N,\mathrm{f}}+\eta)}{(1+\epsilon_{\mathrm{f}})(1+\gamma_{N,\mathrm{f}})-\epsilon_{\mathrm{f}}\overline{a}_N}
\geq\dfrac{\gamma_{N,\mathrm{f}}+\eta}{1+\gamma_{N,\mathrm{f}}}.
\end{align*}
\textbf{Part VIII: }
Under the assumption
$\gamma_{k,\mathrm{f}}=\overline{\gamma}_{\mathrm{f}}$ $\forall k\in\mathbb{I}_{[1,N]}$, $\alpha_{N,\mathrm{f}}>0$ reduces to
\begin{align*}
(\overline{\gamma}_{\mathrm{f}}+\underbrace{\eta+\epsilon_{\mathrm{o}}}_{=1})
\epsilon_{\mathrm{f}}\overline{\gamma}_{\mathrm{f}}(\overline{\gamma}_{\mathrm{f}}+\eta)^{N-1}<
\epsilon_{\mathrm{o}}(1+\epsilon_{\mathrm{f}})(1+\overline{\gamma}_{\mathrm{f}})^N.
\end{align*}
Applying the logarithm yields~\eqref{eq:hat_alpha_explicit_N_terminal_detect}. 
If $\gamma_{k,\mathrm{f}}=\overline{\gamma}_{\mathrm{f}}$ does not hold for some $k\in\mathbb{I}_{\geq 1}$, then by Assumption \ref{ass:stab_term} we still have $\gamma_{k,\mathrm{f}} \le \overline{\gamma}_{\mathrm{f}}$. Hence, if we replace $\overline{\gamma}_{\mathrm{f}}$ by $\gamma_{k,\mathrm{f}}$ in the LP~\eqref{eq:LP_terminal}, then the constraints are tightened and consequently the minimum and thus $\alpha_{N,\mathrm{f}}$ increases. Thus, if Inequality \eqref{eq:hat_alpha_explicit_N_terminal_detect} holds, then $\alpha_{N,\mathrm{f}}>0$ holds also if $\gamma_{k,\mathrm{f}} \ne \overline{\gamma}_{\mathrm{f}}$ for some $k\in\mathbb{I}_{\geq 1}$.
\qed


\subsection{Proofs - Section~\ref{sec:theory}}
\label{sec:app_2}
\change{In the following, we discuss how Theorems~\ref{thm:grimm}, \ref{thm:main}, \ref{thm:analytic_sigma_W}, \ref{thm:analytic_grune} from Section~\ref{sec:theory} follow as a special case of the more general results with terminal costs in Theorems~\ref{thm:grimm_terminal}, \ref{thm:terminal_LP}, \ref{thm:analytic_terminal_pdf}, \ref{thm:analytic_terminal_sigma_W} from Section~\ref{sec:terminal}.} 
In particular, since $V_{\mathrm{f}}=0$, we consider Assumption~\ref{ass:term} with $\overline{c}_{\mathrm{f}}=\underline{c}_{\mathrm{f}}=0$ and $\epsilon_{\mathrm{f}}=\infty$. 
Although in case $V_{\mathrm{f}}=0$, Inequality~\eqref{eq:term_2} cannot be satisfied, the results in Section~\ref{sec:terminal} remain applicable since our choice $\epsilon_{\mathrm{f}}=\infty$ ensures that the inequality is not used in the proofs. 
Furthermore, $V_{\mathrm{f}}=0$ ensures that Assumption~\ref{ass:stab_term} coincides with Assumption~\ref{ass:stab} by setting $\gamma_{k,\mathrm{f}}=\gamma_k$. 
The resulting formulas in the theorems can be viewed as the limit of the general formulas for $\epsilon_{\mathrm{f}}\rightarrow\infty$ . 

In the proof of Theorem~\ref{thm:grimm_terminal}, Case II is always active, which does not require Assumption~\ref{ass:term}. 
The corresponding constant satisfies $\alpha_N=\lim_{\epsilon_{\mathrm{f}}\rightarrow\infty}\alpha_{N,\mathrm{f}}$, while the performance bound~\eqref{eq:performance} follows from~\eqref{eq:performance_terminal} by using $\overline{c}_{\mathrm{f}}=0$. 

Regarding Theorem~\ref{thm:terminal_LP}, $\underline{c}_{\mathrm{f}}=\overline{c}_{\mathrm{f}}=0$ implies $\tilde{V}_{\mathrm{f}}=0$ and we dropp the constraints~\eqref{eq:LP_terminal_Vf_bound} and \eqref{eq:LP_terminal_Vf_next_bound}.

Regarding the proof of Theorem~\ref{thm:analytic_terminal_sigma_W}: 
In Part I, we can directly use $\tilde{\ell}_{N-1}-\tilde{\ell}_{N-1}^*<0$. 
Part II is unnecessary since the constraint~\eqref{eq:LP_terminal_detect_reduced_Vf_next_bound} is not included in the LP~\eqref{eq:LP}. 
In Part III, we choose $c_0=c_1$, $c_2=0$ with $\tilde{V}-\tilde{V}^*=c_0 \min_{k\in\mathbb{I}_{[1,N-1]}}\eta^{k-1}\gamma_{N-k+1}>0$. 
Part IV can again be used directly, if the constraint~\eqref{eq:LP_terminal_detect_v2_reduced_Vf_next_bound} is replaced by 
$\sum_{j=0}^{N-1}\tilde{\ell}_j\leq \gamma_N$, which corresponds to the limit  $\epsilon_{\mathrm{f}}\rightarrow\infty$. 
The remainder of the proof is analogous.

\subsection{Proofs - Section~\ref{sec:design}}
\label{sec:app_3}
\subsubsection{Proof of Proposition~\ref{prop:exp_IO}} 
\label{app:exp_IO}
Assumption~\ref{ass:set} holds trivially.  
Assumption~\ref{ass:stab_term} follows from $\ell$ quadratically upper bounded, $\sigma$ quadratic, the exponential decay $\rho\in[0,1)$, and the geometric series (cf.~\cite[Prop.~2]{Koehler2020Regulation}). 
Assumption~\ref{ass:detect} follows from choosing the scaling $\epsilon_{\mathrm{o}}>0$ appropriately (cf.~\cite[Prop.~3]{Koehler2020Regulation}). \qed

\subsubsection{Proof of Proposition~\ref{prop:simple_penalty}} 
\label{app:simple_penalty}
Inequalities~\eqref{eq:term_1} hold with equality using $\underline{c}_{\mathrm{f}}=\overline{c}_{\mathrm{f}}=\omega$. 
Inequality~\eqref{eq:term_2} follows with:
\begin{align*}
&\min_{u\in\mathbb{U}}V_{\mathrm{f}}(f(x,u))+\ell(x,u)=V_{1,\mathrm{f}}(x)\stackrel{\eqref{eq:stab_term}}{\leq}\gamma_{1,\mathrm{f}}\sigma(x).&\qed 
\end{align*}

\subsubsection{Proof of Proposition~\ref{prop:finite_tail}} 
\label{app:finite_tail}
The lower bound in Inequality~\eqref{eq:term_1} holds using $V_{\mathrm{f}}\geq \ell_{\min}=\sigma$. 
The upper bound follows with
\begin{align*}
V_{\mathrm{f}}(x)\leq \sum_{k=0}^{M-1}C_\ell\rho^k\sigma(x)=\dfrac{1-\rho^M}{1-\rho}C_\ell\sigma(x).
\end{align*}
Inequality~\eqref{eq:term_2} follows with
\begin{align*}
&\min_{u\in\mathbb{U}}V_{\mathrm{f}}(f(x,u))+\ell(x,u)-V_{\mathrm{f}}(x) \\
\leq& V_{\mathrm{f}}(f(x,\kappa(x)))+\ell(x,\kappa(x))-V_{\mathrm{f}}(x) \nonumber\\
=&\ell(x_u(M,x),u(M,x))\leq\dfrac{C_\ell\rho^M(1-\rho)}{1-\rho^M}V_{\mathrm{f}}(x)=\epsilon_{\mathrm{f}}V_{\mathrm{f}}(x),
\end{align*}
where the second inequality follows from the LP analysis in~\cite[Prop.~4]{kohler2021stability}. \qed


\subsection{Auxiliary lemmas}
\label{sec:app_4}

 \begin{lemma}
\label{lemma:induction_formula_app}
Given constants $\eta\in\mathbb{R}$, $\delta_l\in\mathbb{R}$, $l\in\mathbb{I}_{[1,N]}$, the following equations hold for any $k\in\mathbb{I}_{[1,N-1]}$:
\begin{subequations}
\label{eq:induction_formula_app}
\begin{align}
\label{eq:induction_formula_app_a}
&\sum_{j=1}^{k-1}\dfrac{\delta_{N-j+1}-\eta\delta_{N-j,\mathrm{f}}}{1+\delta_{N-j}}\prod_{l=0}^{j-2}\dfrac{\eta+\delta_{N-l}}{1+\delta_{N-l-1}}\nonumber\\
=&\delta_N-\delta_{N-k+1}\prod_{l=0}^{k-2}\dfrac{\eta+\delta_{N-l}}{1+\delta_{N-l-1}},\\
\label{eq:induction_formula_app_b}
&\sum_{j=1}^{k-1}\eta^{k-1-j}
\dfrac{\delta_{N-j+1}-\eta\delta_{N-j}}{1+\delta_{N-j}}\prod_{l=0}^{j-2}\dfrac{\eta+\delta_{N-l}}{1+\delta_{N-l-1}}\nonumber\\
=&
\prod_{l=0}^{k-2}\dfrac{\eta+\delta_{N-l}}{1+\delta_{N-l-1}}-\eta^{k-1}.
\end{align}
\end{subequations}
\end{lemma}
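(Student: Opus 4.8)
The plan is to prove both identities \eqref{eq:induction_formula_app_a} and \eqref{eq:induction_formula_app_b} by induction on $k$, treating them as a pair since the second one feeds naturally into the inductive step of arguments elsewhere in the paper. First I would verify the base case $k=1$: the left-hand sides are empty sums (equal to $0$ by the empty-sum convention), while the right-hand sides are $\delta_N-\delta_N\cdot 1=0$ and $1-1=0$ respectively, using the empty-product convention $\prod_{l=0}^{-1}(\cdot)=1$ stated in the Notation section. So both hold trivially at $k=1$.

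For the inductive step on \eqref{eq:induction_formula_app_a}, assume it holds for some $k\in\mathbb{I}_{[1,N-2]}$ and split off the $j=k$ term from the sum for $k+1$:
\begin{align*}
\sum_{j=1}^{k}\frac{\delta_{N-j+1}-\eta\delta_{N-j}}{1+\delta_{N-j}}\prod_{l=0}^{j-2}\frac{\eta+\delta_{N-l}}{1+\delta_{N-l-1}}
&=\delta_N-\delta_{N-k+1}\prod_{l=0}^{k-2}\frac{\eta+\delta_{N-l}}{1+\delta_{N-l-1}}\\
&\quad+\frac{\delta_{N-k+1}-\eta\delta_{N-k}}{1+\delta_{N-k}}\prod_{l=0}^{k-2}\frac{\eta+\delta_{N-l}}{1+\delta_{N-l-1}}.
\end{align*}
The two terms carrying the product $\prod_{l=0}^{k-2}$ combine: factoring it out leaves $-\delta_{N-k+1}+\frac{\delta_{N-k+1}-\eta\delta_{N-k}}{1+\delta_{N-k}}=\frac{-\delta_{N-k+1}(1+\delta_{N-k})+\delta_{N-k+1}-\eta\delta_{N-k}}{1+\delta_{N-k}}=\frac{-\delta_{N-k}(\delta_{N-k+1}+\eta)}{1+\delta_{N-k}}$. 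Hence the whole expression equals $\delta_N-\delta_{N-k}\cdot\frac{\eta+\delta_{N-k+1}}{1+\delta_{N-k}}\prod_{l=0}^{k-2}\frac{\eta+\delta_{N-l}}{1+\delta_{N-l-1}}=\delta_N-\delta_{N-(k+1)+1}\prod_{l=0}^{(k+1)-2}\frac{\eta+\delta_{N-l}}{1+\delta_{N-l-1}}$, which is exactly \eqref{eq:induction_formula_app_a} at $k+1$. (I would double-check the index alignment $\delta_{N-k+1}=\delta_{N-(k+1)+1+1}$ so that the extra factor is the $l=k-1$ term of the telescoping product.) The argument for \eqref{eq:induction_formula_app_b} is structurally the same, except that the inductive hypothesis appears multiplied by $\eta$ when passing from the sum at $k$ to the sum at $k+1$ (since the exponent $\eta^{k-1-j}$ becomes $\eta^{k-j}=\eta\cdot\eta^{k-1-j}$), and the split-off $j=k$ term has exponent $\eta^{0}=1$; the same cancellation $-\eta\delta_{N-k+1}\cdot(\text{something})+\frac{\delta_{N-k+1}-\eta\delta_{N-k}}{1+\delta_{N-k}}$ — wait, more carefully, one gets $\eta\bigl(\prod_{l=0}^{k-2}\tfrac{\eta+\delta_{N-l}}{1+\delta_{N-l-1}}-\eta^{k-1}\bigr)+\frac{\delta_{N-k+1}-\eta\delta_{N-k}}{1+\delta_{N-k}}\prod_{l=0}^{k-2}\tfrac{\eta+\delta_{N-l}}{1+\delta_{N-l-1}}$, and combining the product terms with coefficient $\eta+\frac{\delta_{N-k+1}-\eta\delta_{N-k}}{1+\delta_{N-k}}=\frac{\eta+\eta\delta_{N-k}+\delta_{N-k+1}-\eta\delta_{N-k}}{1+\delta_{N-k}}=\frac{\eta+\delta_{N-k+1}}{1+\delta_{N-k}}$ yields $\prod_{l=0}^{k-1}\tfrac{\eta+\delta_{N-l}}{1+\delta_{N-l-1}}-\eta^{k}$, as required.

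The only mild obstacle I anticipate is bookkeeping with the shifted subscript $\delta_{N-j+1}$ versus $\delta_{N-j}$ and making sure the product indices $\prod_{l=0}^{j-2}$ telescope into $\prod_{l=0}^{k-1}$ correctly when the $j=k$ summand is absorbed; this is purely a matter of careful index arithmetic and the empty-product/empty-sum conventions, with no genuine analytic content. An alternative, perhaps cleaner, route would be to note that $\sum_{j=1}^{k-1}(c_j-\eta c_{j-1})=c_{k-1}-\eta c_0$ type telescoping is at work: setting $c_j:=\delta_{N-j+1}\prod_{l=0}^{j-2}\frac{\eta+\delta_{N-l}}{1+\delta_{N-l-1}}$ one checks that the $j$-th summand in \eqref{eq:induction_formula_app_a} equals $c_{j-1}-c_j$ — indeed $c_{j-1}-c_j=\prod_{l=0}^{j-2}\frac{\eta+\delta_{N-l}}{1+\delta_{N-l-1}}\bigl(\delta_{N-j+2}\cdot\frac{1+\delta_{N-j+1}}{\eta+\delta_{N-j+1}}\cdot\frac{1}{\ \ }\ldots\bigr)$; I would verify this reindexing makes the sum collapse to $c_0-c_{k-1}=\delta_N-\delta_{N-k+1}\prod_{l=0}^{k-2}\frac{\eta+\delta_{N-l}}{1+\delta_{N-l-1}}$ directly, giving \eqref{eq:induction_formula_app_a} without induction, and similarly \eqref{eq:induction_formula_app_b} via a telescoping sum weighted by powers of $\eta$. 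Either way the computation is elementary; I would present whichever reindexing turns out to have the fewest off-by-one pitfalls.
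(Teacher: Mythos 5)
Your proposal is correct and follows essentially the same route as the paper: induction on $k$ for both identities, with the base case $k=1$ via the empty-sum/product conventions, splitting off the $j=k$ summand, and combining it with the inductive hypothesis through the algebraic identities $-\delta_{N-k+1}+\tfrac{\delta_{N-k+1}-\eta\delta_{N-k}}{1+\delta_{N-k}}=-\delta_{N-k}\tfrac{\eta+\delta_{N-k+1}}{1+\delta_{N-k}}$ and $\eta+\tfrac{\delta_{N-k+1}-\eta\delta_{N-k}}{1+\delta_{N-k}}=\tfrac{\eta+\delta_{N-k+1}}{1+\delta_{N-k}}$, exactly as in the paper's proof (the subscript $\delta_{N-j,\mathrm{f}}$ in \eqref{eq:induction_formula_app_a} is a typo for $\delta_{N-j}$, which you implicitly and correctly read that way).
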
 
\begin{proof}
We prove~\eqref{eq:induction_formula_app} using induction. 
For $k=1$, \eqref{eq:induction_formula_app_a} reduces to $0=\delta_N-\delta_N$. 
For the induction step, suppose~\eqref{eq:induction_formula_app_a} holds for some $k\in\mathbb{I}_{[1,N-2]}$ and hence
\begin{align*}
&\sum_{j=1}^{k}\dfrac{\delta_{N-j+1}-\eta\delta_{N-j}}{1+\delta_{N-j}}\prod_{l=0}^{j-2}\dfrac{\eta+\delta_{N-l}}{1+\delta_{N-l-1}}\nonumber\\
=&\sum_{j=1}^{k-1}\dfrac{\delta_{N-j+1}-\eta\delta_{N-j}}{1+\delta_{N-j}}\prod_{l=0}^{j-2}\dfrac{\eta+\delta_{N-l}}{1+\delta_{N-l-1}}\\+&\dfrac{\delta_{N-k+1}-\eta\delta_{N-k}}{1+\delta_{N-k}}\prod_{l=0}^{k-2}\dfrac{\eta+\delta_{N-l}}{1+\delta_{N-l-1}}\\
\stackrel{\eqref{eq:induction_formula_app_a}}{=}&
\delta_N-\delta_{N-k+1}\prod_{l=0}^{k-2}\dfrac{\eta+\delta_{N-l}}{1+\delta_{N-l-1}}\\
+&\dfrac{\delta_{N-k+1}-\eta\delta_{N-k}}{1+\delta_{N-k}}\prod_{l=0}^{k-2}\dfrac{\eta+\delta_{N-l}}{1+\delta_{N-l-1}}\\
=&\delta_N-\delta_{N-k}\prod_{l=0}^{k-1}\dfrac{\eta+\delta_{N-l}}{1+\delta_{N-l-1}}.
\end{align*}
Similarly, for $k=1$, \eqref{eq:induction_formula_app_b} reduces to $0=1-1$. 
For the induction step, suppose~\eqref{eq:induction_formula_app_b} for some $k\in\mathbb{I}_{[1,N-2]}$. Then
\begin{align*}
&\sum_{j=1}^{k}\eta^{k-j}
\dfrac{\delta_{N-j+1}-\eta\delta_{N-j}}{1+\delta_{N-j}}\prod_{l=0}^{j-2}\dfrac{\eta+\delta_{N-l}}{1+\delta_{N-l-1}}\nonumber\\
=&\dfrac{\delta_{N-k+1}-\eta\delta_{N-k}}{1+\delta_{N-k}}\prod_{l=0}^{k-2}\dfrac{\eta+\delta_{N-l}}{1+\delta_{N-l-1}}\nonumber\\
&+\eta\sum_{j=1}^{k-1}\eta^{k-1-j}
\dfrac{\delta_{N-j+1}-\eta\delta_{N-j}}{1+\delta_{N-j}}\prod_{l=0}^{j-2}\dfrac{\eta+\delta_{N-l}}{1+\delta_{N-l-1}}\nonumber\\
\stackrel{\eqref{eq:induction_formula_app_b}}{=}&
\left(\dfrac{\delta_{N-k+1}-\eta\delta_{N-k}}{1+\delta_{N-k}}+\eta\right)\prod_{l=0}^{k-2}\dfrac{\eta+\delta_{N-l}}{1+\delta_{N-l-1}}-\eta^k\\
=&\prod_{l=0}^{k-1}\dfrac{\eta+\delta_{N-l}}{1+\delta_{N-l-1}}-\eta^k.&\qedhere
\end{align*}
\end{proof}

\begin{lemma}
\label{lemma:analytic_ab_app}
Let $\delta_l,\eta,\tilde{\ell}_l\in\mathbb{R}$, $l\in\mathbb{I}_{[0,N]}$ be constants
satisfying the equations
\begin{align}
\label{eq:analytic_ab_app}
 (\delta_{N}-\delta_{N-k+1}\eta^{k-1})(\tilde{\ell}_0+\eta)=\sum_{j=1}^{k-1}(1+\delta_{N-k+1}\eta^{k-1-j})\tilde{\ell}_j,
\end{align}
for all $k\in\mathbb{I}_{[2,N]}$. 
Then, we have
\begin{align}
\label{eq:ab_explicit_app}
\tilde{\ell}_k=&a_k (\tilde{\ell}_0+\eta), ~k\in\mathbb{I}_{[1,N-1]},\\
a_k=&\dfrac{\delta_{N-k+1}-\eta\delta_{N-k}}{\delta_{N-k}+1}\prod_{j=0}^{k-2}\dfrac{\eta+\delta_{N-j}}{\delta_{N-j-1}+1},~ k\in\mathbb{I}_{[1,N-1]}.\nonumber
\end{align}
\end{lemma}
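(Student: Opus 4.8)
The statement is purely algebraic: given the family of linear equations~\eqref{eq:analytic_ab_app} indexed by $k\in\mathbb{I}_{[2,N]}$, one wants to solve for $\tilde{\ell}_1,\dots,\tilde{\ell}_{N-1}$ in terms of $\tilde{\ell}_0+\eta$ and show the closed form~\eqref{eq:ab_explicit_app}. The natural approach is induction on $k$. First I would rewrite~\eqref{eq:analytic_ab_app} so that it isolates $\tilde{\ell}_{k-1}$: taking the $k$-th equation and the $(k-1)$-th equation and subtracting an appropriately scaled copy should telescope the sum $\sum_{j=1}^{k-1}(1+\delta_{N-k+1}\eta^{k-1-j})\tilde\ell_j$ into something involving only $\tilde\ell_{k-1}$ and lower-order partial sums that are already known by the induction hypothesis. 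Concretely, the base case $k=2$ gives $(\delta_N-\delta_{N-1}\eta)(\tilde\ell_0+\eta)=(1+\delta_{N-1})\tilde\ell_1$, i.e.\ $\tilde\ell_1=\frac{\delta_N-\eta\delta_{N-1}}{1+\delta_{N-1}}(\tilde\ell_0+\eta)$, which matches $a_1(\tilde\ell_0+\eta)$ with the empty product equal to $1$.

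For the inductive step, suppose $\tilde\ell_j=a_j(\tilde\ell_0+\eta)$ for $j\in\mathbb{I}_{[1,k-2]}$ with the $a_j$ of~\eqref{eq:ab_explicit_app}. I would substitute these into the $k$-th equation of~\eqref{eq:analytic_ab_app}, split off the $j=k-1$ term, and divide out the common factor $(\tilde\ell_0+\eta)$ (harmless as a formal identity). This reduces the claim to
\begin{align*}
\delta_N-\delta_{N-k+1}\eta^{k-1}
= \sum_{j=1}^{k-2}(1+\delta_{N-k+1}\eta^{k-1-j})a_j
+ (1+\delta_{N-k+1})a_{k-1},
\end{align*}
and using $a_{k-1}=\frac{\delta_{N-k+2}-\eta\delta_{N-k+1}}{1+\delta_{N-k+1}}\prod_{l=0}^{k-3}\frac{\eta+\delta_{N-l}}{1+\delta_{N-l-1}}$ the last term simplifies nicely. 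Splitting the remaining sum into the ``$1\cdot a_j$'' part and the ``$\delta_{N-k+1}\eta^{k-1-j}a_j$'' part, I can invoke Lemma~\ref{lemma:induction_formula_app}: identity~\eqref{eq:induction_formula_app_a} (with the shift $k\mapsto k-1$) evaluates $\sum_{j=1}^{k-2}a_j$, and identity~\eqref{eq:induction_formula_app_b} (again with $k\mapsto k-1$) evaluates $\sum_{j=1}^{k-2}\eta^{k-2-j}a_j$, which is exactly what appears after factoring one $\eta$ out of $\eta^{k-1-j}$. After plugging in both closed forms the right-hand side should collapse to $\delta_N-\delta_{N-k+1}\eta^{k-1}$, completing the induction and simultaneously confirming that the formula for $a_{k-1}$ propagates to $a_k$ (noting the product in $a_k$ gains exactly the factor $\frac{\eta+\delta_{N-k+2}}{1+\delta_{N-k+1}}$ relative to $a_{k-1}$).

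The only mildly delicate bookkeeping is matching index ranges in the two auxiliary identities of Lemma~\ref{lemma:induction_formula_app} to the sums that arise here; the products $\prod_{l=0}^{j-2}\frac{\eta+\delta_{N-l}}{1+\delta_{N-l-1}}$ in that lemma are precisely the tail factors in $a_j$, so the matching is exact once one writes $a_j=\frac{\delta_{N-j+1}-\eta\delta_{N-j}}{1+\delta_{N-j}}\prod_{l=0}^{j-2}\frac{\eta+\delta_{N-l}}{1+\delta_{N-l-1}}$ and reads off the coefficient. I expect the main obstacle to be purely clerical: keeping the off-by-one shifts straight between the ``$k$-th equation'' indexing of~\eqref{eq:analytic_ab_app} and the ``up to $k-1$'' summation indexing of Lemma~\ref{lemma:induction_formula_app}, and verifying that the $\eta^{k-1}$ inhomogeneous terms cancel correctly. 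There is no genuine conceptual difficulty once the right telescoping/lemma pairing is identified; alternatively, one could avoid Lemma~\ref{lemma:induction_formula_app} entirely by subtracting consecutive equations~\eqref{eq:analytic_ab_app} for $k$ and $k-1$ directly, but that route requires more careful handling of the $\eta$-weighted terms, so routing through the auxiliary lemma is cleaner.
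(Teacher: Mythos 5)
Your proposal is correct and follows essentially the same route as the paper: solve the $k$-th equation recursively for $\tilde{\ell}_{k-1}$ (the paper writes this as a recursion for the coefficients $a_{k-1}$) and verify the closed form by splitting the weighted sum into $\sum_j a_j$ and $\eta\sum_j\eta^{\cdot}a_j$ and applying the two identities of Lemma~\ref{lemma:induction_formula_app}, exactly as you outline. One cosmetic remark: rather than "dividing out" $(\tilde{\ell}_0+\eta)$, the clean justification is that after substituting the induction hypothesis the $k$-th equation reads $(1+\delta_{N-k+1})\tilde{\ell}_{k-1}=\bigl[\delta_N-\delta_{N-k+1}\eta^{k-1}-\sum_{j=1}^{k-2}(1+\delta_{N-k+1}\eta^{k-1-j})a_j\bigr](\tilde{\ell}_0+\eta)$, so verifying your displayed coefficient identity yields $\tilde{\ell}_{k-1}=a_{k-1}(\tilde{\ell}_0+\eta)$ even when $\tilde{\ell}_0+\eta=0$ (only nonvanishing of $1+\delta_{N-k+1}$ is needed, as in the paper).
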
 
\begin{proof}
Given~\eqref{eq:analytic_ab_app}, we can write $\tilde{\ell}_{k-1}$, $k\in\mathbb{I}_{[1,k']}$ as an affine function of $\tilde{\ell}_0$ using the following recursion:
\begin{align*}
\tilde{\ell}_{k-1}:=&\dfrac{\delta_N-\delta_{N-k+1}\eta^{k-1}}{1+\delta_{N-k+1}}(\tilde{\ell}_0+\eta)\\
&-\sum_{j=1}^{k-2}\dfrac{1+\delta_{N-k+1}\eta^{k-1-j}}{1+\delta_{N-k+1}}\tilde{\ell}_j.
\end{align*}
The result is given by $\tilde{\ell}_k=a_k(\tilde{\ell}_0+\eta)$, $k\in\mathbb{I}_{[1,N-1]}$ with recursively defined constants
\begin{align}
\label{eq:ab_recursion_app}
a_{k-1}=&\dfrac{\delta_N-\delta_{N-k+1}\eta^{k-1}}{1+\delta_{N-k+1}}
-\sum_{j=1}^{k-2}\dfrac{1+\delta_{N-k+1}\eta^{k-1-j}}{1+\delta_{N-k+1}}a_j,
\end{align}
for $k\in\mathbb{I}_{[2,N]}$.
Satisfaction of equation~\eqref{eq:ab_recursion_app} corresponds to the following equation for all $k\in\mathbb{I}_{[1,N-1]}$:
\begin{align*}
&(1+\delta_{N-k})a_{k}
\stackrel{\eqref{eq:ab_explicit_app}}{=}(\delta_{N-k+1}-\eta\delta_{N-k})\prod_{j=0}^{k-2}\dfrac{\eta+\delta_{N-j}}{\delta_{N-j-1}+1}\\
\stackrel{!}{=}&(\delta_N-\delta_{N-k}\eta^{k})
-\sum_{j=1}^{k-1}(1+\delta_{N-k}\eta^{k-j})a_j\\
\stackrel{\eqref{eq:ab_explicit_app}}{=}&\delta_N-\delta_{N-k}\eta^{k}\\
&-\sum_{j=1}^{k-1}(1+\delta_{N-k}\eta^{k-j})
\dfrac{\delta_{N-j+1}-\eta\delta_{N-j}}{\delta_{N-j}+1}\prod_{l=0}^{j-2}\dfrac{\eta+\delta_{N-l}}{\delta_{N-l-1}+1}.
\end{align*}
Satisfaction of this equation follows by applying the two equations from Lemma~\ref{lemma:induction_formula_app}:
\begin{align*}
&\sum_{j=1}^{k-1}(1+\delta_{N-k}\eta^{k-j})
\dfrac{\delta_{N-j+1}-\eta\delta_{N-j}}{\delta_{N-j}+1}\prod_{l=0}^{j-2}\dfrac{\eta+\delta_{N-l}}{\delta_{N-l-1}+1}\\
\stackrel{\eqref{eq:induction_formula_app}}{=}&\delta_N-\delta_{N-k+1}\prod_{l=0}^{k-2}\dfrac{\eta+\delta_{N-l}}{1+\delta_{N-l-1}}\\
&+\delta_{N-k}\eta\left(\prod_{l=0}^{k-2}\dfrac{\eta+\delta_{N-l}}{1+\delta_{N-l-1}}-\eta^{k-1}\right)\\
=&\delta_N-\delta_{N-k}\eta^k-(\delta_{N-k+1}-\eta\delta_{N-k})\prod_{l=0}^{k-2}\dfrac{\eta+\delta_{N-l}}{1+\delta_{N-l-1}}.&\qedhere
\end{align*}
\end{proof}

\begin{lemma}
\label{lemma:sum_nonnegative_app}
Suppose $\sum_{j=1}^{k-1}z_j\leq 0$ for all $k\in\mathbb{I}_{[2,k']}$, $k'\in\mathbb{I}_{\geq 1}$ and $\sum_{j=1}^{k'}\epsilon_jz_j\leq 0$ with a non-decreasing positive sequence $\epsilon_j>0$, $j\in\mathbb{I}_{[1,k']}$. Then
$\sum_{k=1}^{k'} z_k\leq 0$.
\end{lemma}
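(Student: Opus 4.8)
The plan is to pass to the partial sums $S_k := \sum_{j=1}^{k} z_j$ (with the convention $S_0 := 0$) and apply Abel summation (summation by parts) to the weighted sum $\sum_{j=1}^{k'}\epsilon_j z_j$. In this language the hypothesis $\sum_{j=1}^{k-1} z_j \le 0$ for all $k\in\mathbb{I}_{[2,k']}$ is precisely $S_k\le 0$ for all $k\in\mathbb{I}_{[1,k'-1]}$, and the desired conclusion $\sum_{k=1}^{k'} z_k\le 0$ is just $S_{k'}\le 0$. So the whole statement is about controlling the sign of one last partial sum given the signs of all earlier ones plus one weighted inequality.

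First I would write $z_j = S_j - S_{j-1}$ and rearrange, using the standard Abel identity
\[
\sum_{j=1}^{k'}\epsilon_j z_j = \sum_{j=1}^{k'}\epsilon_j\,(S_j - S_{j-1}) = \epsilon_{k'} S_{k'} - \epsilon_1 S_0 - \sum_{j=1}^{k'-1}(\epsilon_{j+1}-\epsilon_j)\,S_j ,
\]
where $S_0 = 0$ kills the boundary term at the lower end. Solving for the remaining boundary term gives
\[
\epsilon_{k'} S_{k'} = \sum_{j=1}^{k'}\epsilon_j z_j + \sum_{j=1}^{k'-1}(\epsilon_{j+1}-\epsilon_j)\,S_j .
\]

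The conclusion is then immediate from the sign hypotheses: the first sum on the right is $\le 0$ by assumption, and in the second sum every coefficient $\epsilon_{j+1}-\epsilon_j$ is $\ge 0$ since $(\epsilon_j)$ is non-decreasing, while every $S_j\le 0$ for $j\in\mathbb{I}_{[1,k'-1]}$ by hypothesis, so that sum is $\le 0$ as well. Hence $\epsilon_{k'} S_{k'}\le 0$, and dividing by $\epsilon_{k'}>0$ yields $S_{k'} = \sum_{k=1}^{k'} z_k \le 0$, as claimed. The degenerate case $k'=1$ is covered automatically: the second sum is empty and $\epsilon_1 z_1\le 0$ forces $z_1\le 0$.

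I do not expect a genuine obstacle here; the only thing requiring care is the index bookkeeping in the Abel rearrangement — in particular that the surviving boundary term is $\epsilon_{k'} S_{k'}$ and the telescoping coefficient is the forward difference $\epsilon_{j+1}-\epsilon_j$ rather than $\epsilon_j-\epsilon_{j-1}$, which is exactly what makes the non-decreasing (rather than non-increasing) monotonicity of $(\epsilon_j)$ the right hypothesis. An alternative, equally short route is a downward induction on $k'$ peeling off $z_{k'}$, but the summation-by-parts argument is the most transparent and is the one I would write up.
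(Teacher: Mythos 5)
Your proof is correct and is essentially the paper's own argument in different clothing: the Abel summation identity you use is exactly the paper's nonnegative combination of the hypotheses, which weights each inequality $\sum_{j=1}^{k-1}z_j\leq 0$ by the forward difference $(\epsilon_k-\epsilon_{k-1})/\epsilon_{k'}\geq 0$ and the weighted inequality by $1/\epsilon_{k'}$, with the same telescoping producing the coefficient $1$ on each $z_k$. No gap; the index bookkeeping you flag is handled correctly.
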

\begin{proof}
Define $\tilde{\epsilon}_k=\dfrac{\epsilon_k-\epsilon_{k-1}}{\epsilon_{k'}}\geq 0$, $k\in\mathbb{I}_{[2,k']}$. 
Multiplying the given inequalities by $\tilde{\epsilon}_k$, and the other inequality by $1/\epsilon_{k'}$ and summing up the inequalities yields
\begin{align*}
0\geq& \sum_{k=2}^{k'}\tilde{\epsilon}_k\left[\sum_{j=1}^{k-1}z_j\right] + \dfrac{1}{\epsilon_{k'}}\sum_{j=1}^{k'}\epsilon_j z_j\\
=&\sum_{k=1}^{k'}\left( \dfrac{\epsilon_k}{\epsilon_{k'}}+\sum_{j=k+1}^{k'}\tilde{\epsilon}_{j}\right)z_k=\sum_{k=1}^{k'}z_k,
\end{align*}
where the last equality uses a telescopic sum. 
\end{proof}


\begin{lemma}
\label{lemma:submult_induction_pdf}
Suppose conditions~\eqref{eq:submult_pdf_term} hold. 
Then, for any $k\in\mathbb{I}_{[1,N-1]}$, $l\in\mathbb{I}_{\geq 0}$, we have
\begin{align}
\label{eq:lemma_submult_induction_pdf}
&\epsilon_{\mathrm{f}}\left[\sum_{n=l+1}^{N-k+l}c_n+c_{N-k+1+l,\mathrm{f}}\right]\prod_{i=2}^{N-k}(\gamma_{i,\mathrm{f}}-1) \\
&-(1+\epsilon_{\mathrm{f}})(c_{N-k+l}+c_{N-k+1+l,\mathrm{f}}-c_{N-k+l,\mathrm{f}})\prod_{i=2}^{N-k}\gamma_{i,\mathrm{f}}\geq 0.\nonumber
\end{align}
\end{lemma}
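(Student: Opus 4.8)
\textbf{Proof plan for Lemma~\ref{lemma:submult_induction_pdf}.}
The plan is to prove the inequality \eqref{eq:lemma_submult_induction_pdf} by induction on $m := N-k$, which ranges over $\mathbb{I}_{[1,N-1]}$, with the parameter $l \in \mathbb{I}_{\geq 0}$ kept general throughout (this is essential: the induction step will apply the hypothesis at a shifted value of $l$). For the base case $m=1$ (i.e.\ $k=N-1$) the empty products equal $1$, so the claim reduces to
\begin{align*}
\epsilon_{\mathrm{f}}\left[c_{l+1}+c_{l+2,\mathrm{f}}\right] - (1+\epsilon_{\mathrm{f}})\left(c_{l+1}+c_{l+2,\mathrm{f}}-c_{l+1,\mathrm{f}}\right) \geq 0,
\end{align*}
which rearranges to $(1+\epsilon_{\mathrm{f}})c_{l+1,\mathrm{f}} \geq c_{l+1}+c_{l+2,\mathrm{f}}$; this is exactly Inequality~\eqref{eq:submult_pdf_term_3} with index $l+1$. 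So the base case is immediate.

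For the induction step, assume \eqref{eq:lemma_submult_induction_pdf} holds for $m-1$ (all $l$) and prove it for $m$. The idea is to peel off the factor $\gamma_{m,\mathrm{f}}$ from $\prod_{i=2}^{m}\gamma_{i,\mathrm{f}}$ and the factor $(\gamma_{m,\mathrm{f}}-1)$ from $\prod_{i=2}^{m}(\gamma_{i,\mathrm{f}}-1)$, using the decomposition $\gamma_{m,\mathrm{f}} = \sum_{j=0}^{m-1}c_j + c_{m,\mathrm{f}}$ from \eqref{eq:submult_pdf_term_12}. Write $P := \prod_{i=2}^{m-1}\gamma_{i,\mathrm{f}}$ and $Q := \prod_{i=2}^{m-1}(\gamma_{i,\mathrm{f}}-1)$. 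The left-hand side of \eqref{eq:lemma_submult_induction_pdf} becomes
\begin{align*}
\epsilon_{\mathrm{f}}\Big[\textstyle\sum_{n=l+1}^{m+l}c_n + c_{m+1+l,\mathrm{f}}\Big](\gamma_{m,\mathrm{f}}-1)\,Q - (1+\epsilon_{\mathrm{f}})\big(c_{m+l}+c_{m+1+l,\mathrm{f}}-c_{m+l,\mathrm{f}}\big)\gamma_{m,\mathrm{f}}\,P.
\end{align*}
The strategy is to bound $\gamma_{m,\mathrm{f}}\,P$ from above in terms of $(\gamma_{m,\mathrm{f}}-1)\,Q$ (using $\gamma_{i,\mathrm{f}}\geq 1$, so $Q\le P$, together with a telescoping identity for $\gamma_{m,\mathrm{f}}\,P - (\gamma_{m,\mathrm{f}}-1)\,Q$), and then to feed the residual terms into the induction hypothesis at parameter $l+1$ (shift) and at the same $l$. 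Concretely: split $\gamma_{m,\mathrm{f}} = (\gamma_{m,\mathrm{f}}-1) + 1$ in the second product, and use submultiplicativity \eqref{eq:submult_pdf_term_12} ($c_{a+b}\le c_a c_b$, $c_{a+b,\mathrm{f}}\le c_a c_{b,\mathrm{f}}$) to absorb the index shifts of the $c$-terms — e.g.\ $c_{m+l} \le c_{m-1} c_{l+1}$ and $c_{m+1+l,\mathrm{f}} \le c_{m-1} c_{l+2,\mathrm{f}}$ — so that a common factor can be extracted and what remains is precisely an instance of \eqref{eq:lemma_submult_induction_pdf} with $N-k$ replaced by $m-1$.

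I expect the main obstacle to be the bookkeeping in this index-shift absorption: arranging the telescoping so that the "carried" term $c_{m-1}$ (or the analogous last controllability increment) factors cleanly out of both the $\sum c_n$ block and the $c_{\cdot,\mathrm{f}}$ terms, while simultaneously respecting that the shift in the summation range (from $\sum_{n=l+1}^{m+l}$ down to $\sum_{n=l+2}^{m+l}$) must match the shift $l\mapsto l+1$ in the induction hypothesis. One clean way to organize this is to first prove the auxiliary identity
\begin{align*}
\gamma_{m,\mathrm{f}}\prod_{i=2}^{m-1}\gamma_{i,\mathrm{f}} - (\gamma_{m,\mathrm{f}}-1)\prod_{i=2}^{m-1}(\gamma_{i,\mathrm{f}}-1) = \sum_{j=2}^{m}\prod_{i=2}^{j-1}(\gamma_{i,\mathrm{f}}-1)\prod_{i=j+1}^{m}\gamma_{i,\mathrm{f}},
\end{align*}
(the telescoping identity of the type used in \eqref{eq:LP_terminal_pdf_temp2}, i.e.\ \cite[Lemma~10.2]{grune2010analysis}), which quantifies the gap between the two products and makes the sign of the residual transparent; then the remaining step is a direct comparison using $\gamma_{i,\mathrm{f}}\ge 1$ and \eqref{eq:submult_pdf_term}. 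Once the algebra is set up this way, the induction closes, and for $l=0$ this is the instance actually invoked in Part~VI of the proof of Theorem~\ref{thm:analytic_terminal_pdf}.
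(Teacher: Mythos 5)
Your induction skeleton coincides with the paper's: induction on $m=N-k$ with $l$ kept general, base case $m=1$ reduced to \eqref{eq:submult_pdf_term_3} at index $l+1$ (which you verify correctly), and the hypothesis to be applied with $l$ shifted to $l+1$. The gap is in the induction step itself, which is where the lemma lives: you never exhibit the decomposition that closes it, and the concrete manipulations you do name would not. Writing $S:=\sum_{n=l+1}^{m+l}c_n+c_{m+1+l,\mathrm{f}}$, $b:=c_{m+l}+c_{m+1+l,\mathrm{f}}-c_{m+l,\mathrm{f}}$, $P:=\prod_{i=2}^{m-1}\gamma_{i,\mathrm{f}}$, $Q:=\prod_{i=2}^{m-1}(\gamma_{i,\mathrm{f}}-1)$, the step rests on the exact identity
\begin{align*}
&\epsilon_{\mathrm{f}}S\,(\gamma_{m,\mathrm{f}}-1)Q-(1+\epsilon_{\mathrm{f}})\,b\,\gamma_{m,\mathrm{f}}P\\
&=\gamma_{m,\mathrm{f}}\Bigl(\epsilon_{\mathrm{f}}(S-c_{l+1})Q-(1+\epsilon_{\mathrm{f}})\,b\,P\Bigr)+\epsilon_{\mathrm{f}}\bigl(c_{l+1}\gamma_{m,\mathrm{f}}-S\bigr)Q,
\end{align*}
in which the first bracket is \emph{verbatim} the left-hand side of \eqref{eq:lemma_submult_induction_pdf} at $(m-1,l+1)$ — note that $b$ carries over unchanged, so no absorption of $c_{m+l}$ or $c_{m+1+l,\mathrm{f}}$ is needed or wanted — and the residual is nonnegative by a single use of \eqref{eq:submult_pdf_term_12} through the decomposition $\gamma_{m,\mathrm{f}}=\sum_{j=0}^{m-1}c_j+c_{m,\mathrm{f}}$, namely $c_{l+1}\gamma_{m,\mathrm{f}}\geq\sum_{n=l+1}^{m+l}c_n+c_{m+1+l,\mathrm{f}}=S$. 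This multiplicative factor-out of $\gamma_{m,\mathrm{f}}$ from the entire hypothesis instance is the missing idea, and it is exactly how the paper argues.

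By contrast, the moves you state concretely point the wrong way. Applying $c_{m+l}\leq c_{m-1}c_{l+1}$ and $c_{m+1+l,\mathrm{f}}\leq c_{m-1}c_{l+2,\mathrm{f}}$ inside the negative bracket $b$ sacrifices the cancellation with $-c_{m+l,\mathrm{f}}$: the lemma only holds because, via \eqref{eq:submult_pdf_term_3}, $b\leq\epsilon_{\mathrm{f}}c_{m+l,\mathrm{f}}$ is of order $\epsilon_{\mathrm{f}}$, matching the $\epsilon_{\mathrm{f}}$ in front of the positive term; once the two positive summands of $b$ are bounded separately, the negative side is of order $1+\epsilon_{\mathrm{f}}$ and the resulting inequality fails for small $\epsilon_{\mathrm{f}}$. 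Reducing instead to the hypothesis at the same $l$ would require a \emph{lower} bound on $c_{m+l,\mathrm{f}}$ in terms of $c_{m-1+l,\mathrm{f}}$, which \eqref{eq:submult_pdf_term_12} does not provide. Finally, the telescoping identity for $\gamma_{m,\mathrm{f}}P-(\gamma_{m,\mathrm{f}}-1)Q$ (equivalently the split $\gamma_{m,\mathrm{f}}=(\gamma_{m,\mathrm{f}}-1)+1$) leaves you needing something like $(\gamma_{m,\mathrm{f}}-1)c_{l+1}Q\geq(1+\epsilon_{\mathrm{f}})c_{m+l,\mathrm{f}}P$, which does not follow from \eqref{eq:submult_pdf_term} since $P/Q$ is unbounded. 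So the plan as written does not go through; replacing the step by the identity above closes the induction.
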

\begin{proof}
This result is a modified version of~\cite[Lemma~10.1]{grune2010analysis}.
We show Inequality~\eqref{eq:lemma_submult_induction_pdf} using an induction proof w.r.t.. $k$. 
For $k=N-1$ and any $l\in\mathbb{I}_{\geq 0}$, \eqref{eq:lemma_submult_induction_pdf} reduces to
\begin{align*}
&(1+\epsilon_{\mathrm{f}})c_{1+l,\mathrm{f}}-c_{2+l,\mathrm{f}}-c_{1+l}\geq 0,
\end{align*}
which holds due to~\eqref{eq:submult_pdf_term_3}.
For the induction step, note that~\eqref{eq:lemma_submult_induction_pdf} is equivalent to non-negativity of the following expression
\begin{align*}
&(\gamma_{N-k,\mathrm{f}}-1)\epsilon_{\mathrm{f}}\left[\sum_{n=l+1}^{N-k+l}c_n+c_{N-k+1+l,\mathrm{f}}\right]\prod_{i=2}^{N-k-1}(\gamma_i-1)\\
&-\gamma_{N-k,\mathrm{f}}(1+\epsilon_{\mathrm{f}})\left[c_{N-k+l}+c_{N-k+1+l,\mathrm{f}}-c_{N-k+l,\mathrm{f}}\right]\prod_{i=2}^{N-k-1}\gamma_{i,\mathrm{f}} \nonumber\\
=&\gamma_{N-k,\mathrm{f}}\left(\epsilon_{\mathrm{f}}\left[\sum_{n=l+2}^{N-k+l}c_n+c_{N-k+1+l,\mathrm{f}}\right]\prod_{i=2}^{N-k-1}(\gamma_i-1)\right.\\
&\left.-(1+\epsilon_{\mathrm{f}})(c_{N-k+l}+c_{N-k+1+l,\mathrm{f}}-c_{N-k+l,\mathrm{f}})\prod_{i=2}^{N-k-1}\gamma_{i,\mathrm{f}}
\right)\\
&+\epsilon_{\mathrm{f}}\left[c_{l+1}\gamma_{N-k,\mathrm{f}}-\sum_{n=l+1}^{N-k+l}c_n-c_{N-k+1+l,\mathrm{f}}\right]\prod_{i=2}^{N-k-1}(\gamma_i-1). \end{align*}
Non-negativity of the first term follows by the induction assumption, i.e., \eqref{eq:lemma_submult_induction_pdf} holds with $k$ and $l$ replaced by $k+1$, $l+1$. 
Non-negativity of the second term follows from the sub-multiplicativity condition~\eqref{eq:submult_pdf_term_12} with $\gamma_{N-k,\mathrm{f}}=\sum_{n=0}^{N-k-1}c_n+c_{N-k,\mathrm{f}}$.
\end{proof}

\bibliographystyle{IEEEtran} 
\bibliography{Literature} 
 
\begin{IEEEbiography}[{\includegraphics[width=1in,height=1.25in,clip,keepaspectratio]{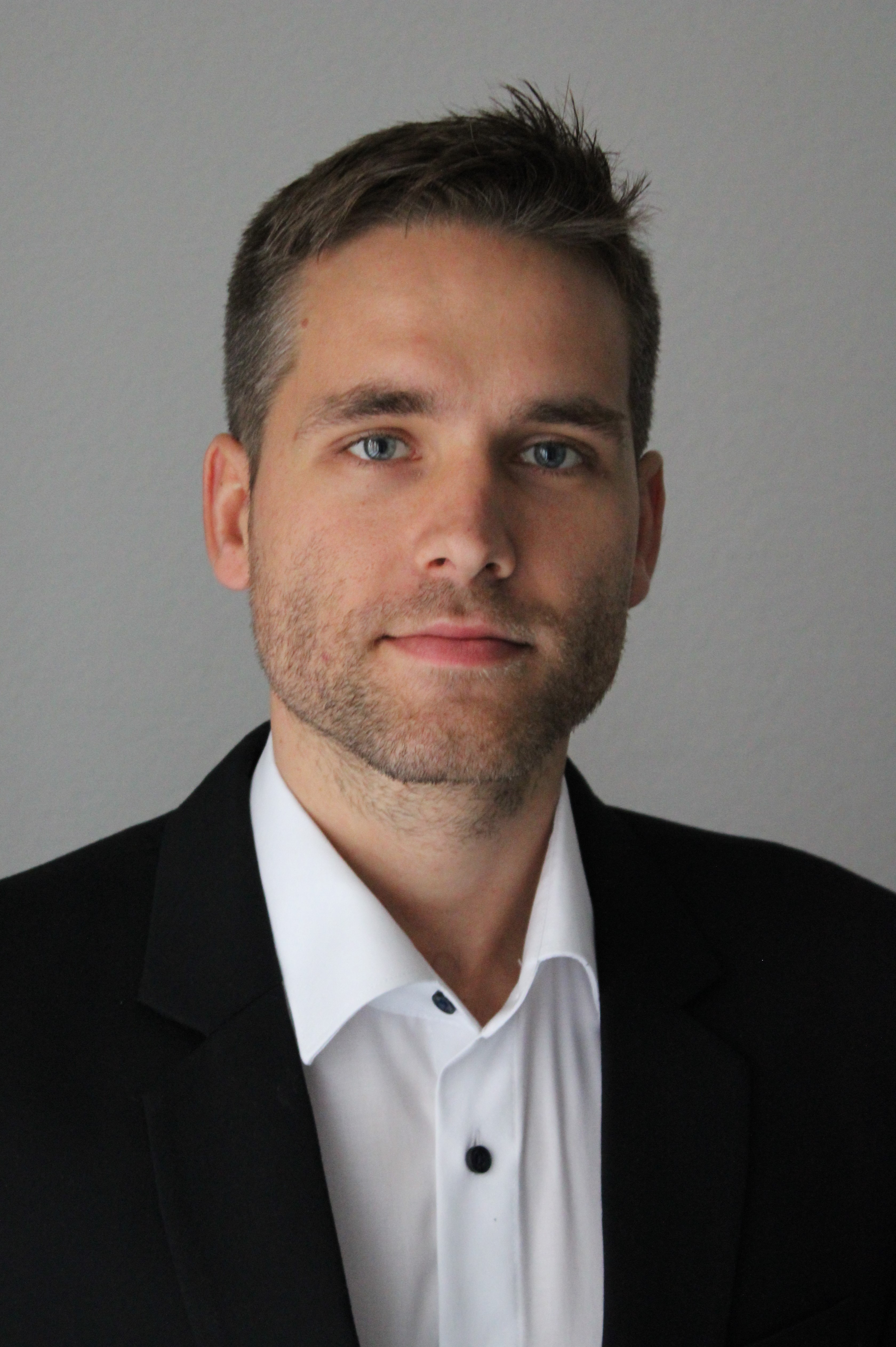}}]{Johannes K\"ohler}
received his Master degree in Engineering Cybernetics from the University of Stuttgart, Germany, in 2017. 
In 2021, he obtained a Ph.D. in mechanical engineering, also from the University of Stuttgart,
Germany, for which he received the 2021 European  Systems \& Control Ph.D. award. 
He is currently a postdoctoral researcher at the Institute for Dynamic Systems and Control (IDSC) at ETH Zürich.
His research interests are in the area of model predictive control and control and estimation for nonlinear uncertain systems. 
\end{IEEEbiography}

\begin{IEEEbiography}[{\includegraphics[width=1in,height=1.25in,clip,keepaspectratio]{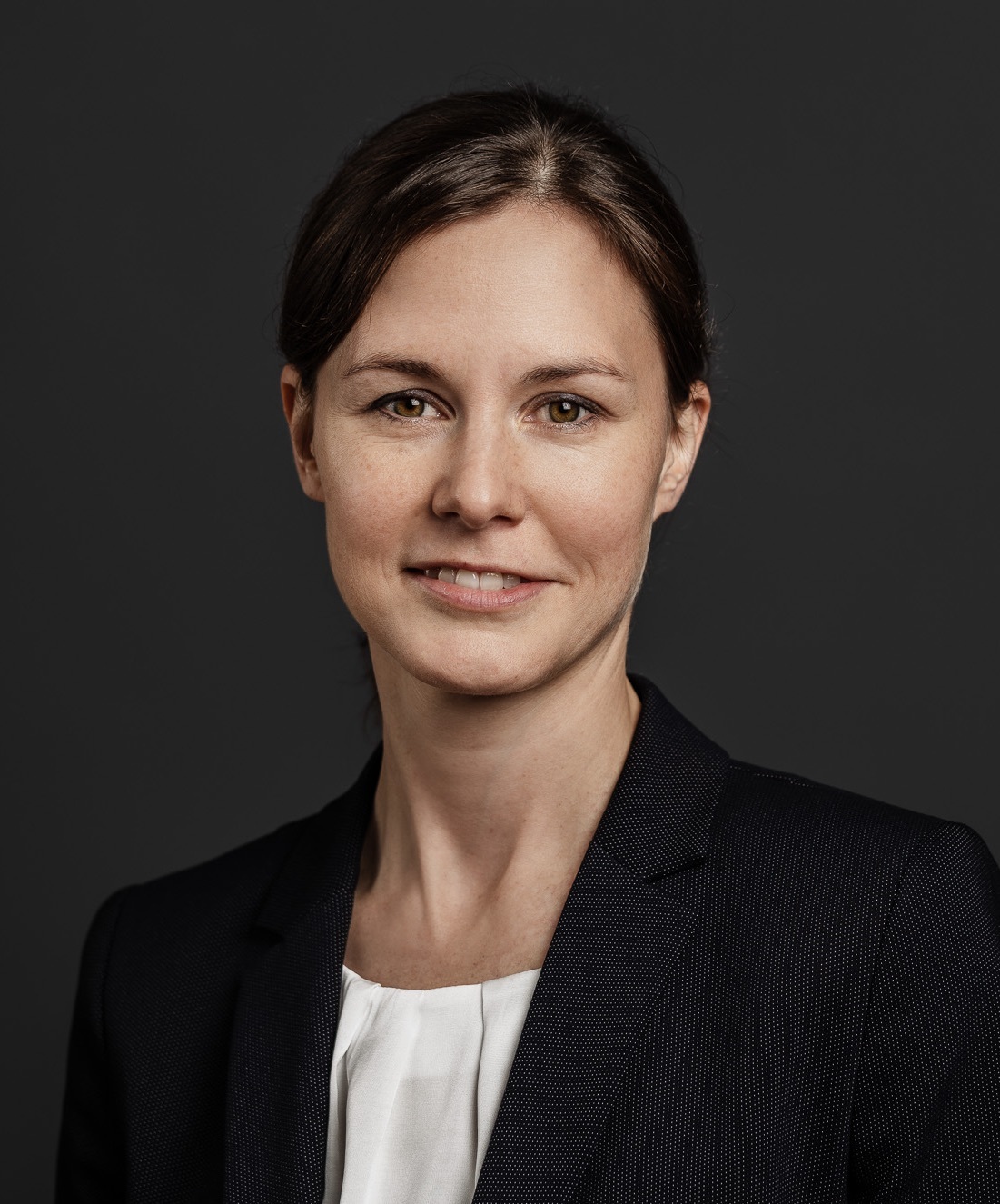}}]{Melanie N. Zeilinger}
 is an Associate Professor at ETH Zürich, Switzerland. 
She received the Diploma degree in engineering cybernetics from the University of Stuttgart, Germany, in 2006, and the Ph.D. degree with honors in electrical engineering from ETH Zürich, Switzerland, in 2011. 
From 2011 to 2012 she was a Postdoctoral Fellow with the Ecole Polytechnique Federale de Lausanne (EPFL), Switzerland.
She was a Marie Curie Fellow and Postdoctoral Researcher with the Max Planck Institute for Intelligent
Systems, Tübingen, Germany until 2015 and with the Department of Electrical Engineering and Computer Sciences at the University
of California at Berkeley, CA, USA, from 2012 to 2014. 
From 2018 to 2019 she was a professor at the University of Freiburg, Germany. 
Her current research interests include safe learning-based control, as well as distributed control and optimization, with applications to robotics and human-in-the loop control.
\end{IEEEbiography}

\begin{IEEEbiography}[{\includegraphics[width=1in,height=1.25in,clip,keepaspectratio]{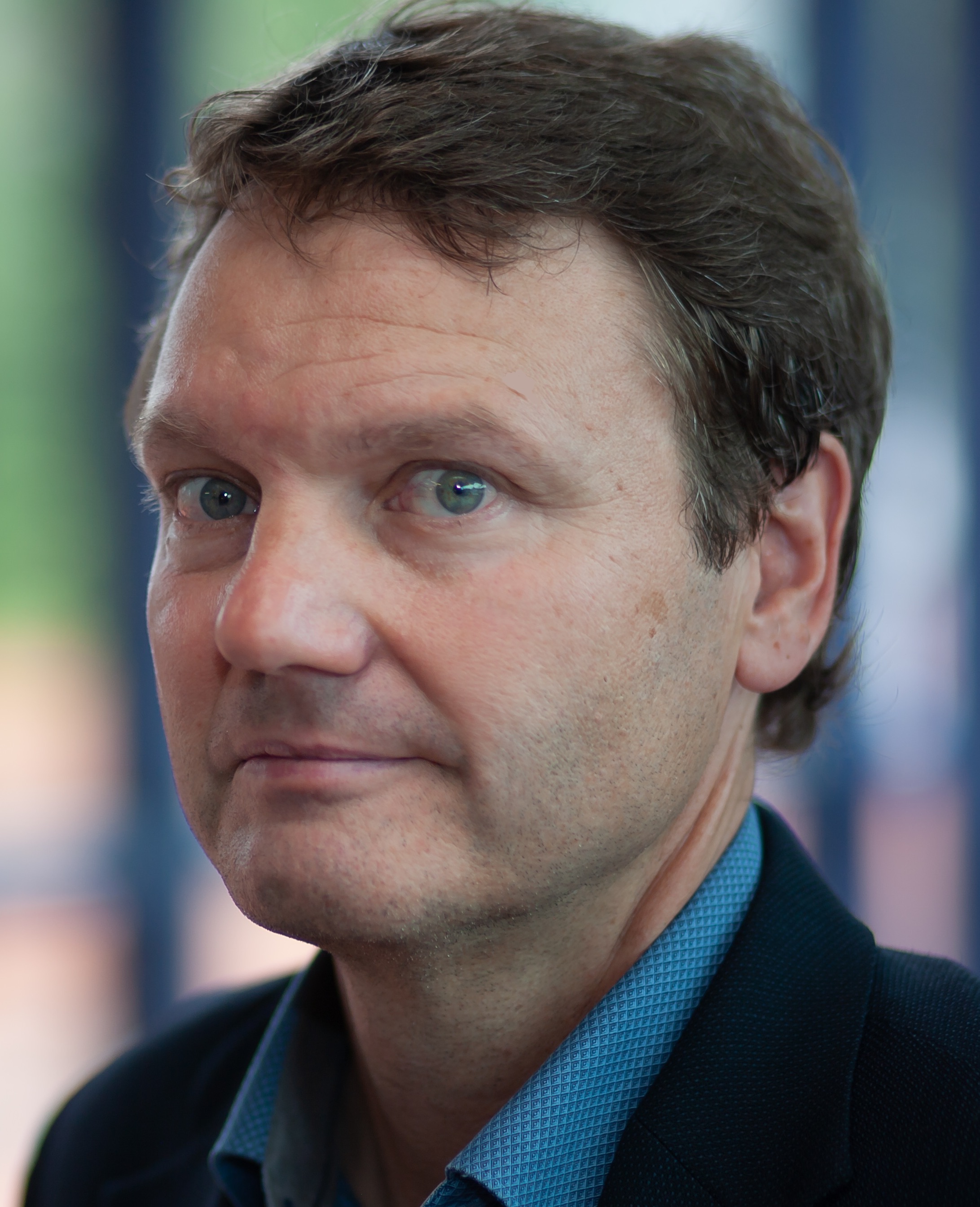}}]{Lars Grüne} 
 has been Professor for Applied Mathematics at the University of Bayreuth, Germany, since 2002. 
He received his Diploma and Ph.D. in Mathematics in 1994 and 1996, respectively, from the University of Augsburg and his habilitation from the J.W. Goethe University in Frankfurt/M in 2001. 
He held visiting positions at the Universities of Rome ‘Sapienza’ (Italy), Padova (Italy), Melbourne (Australia), Paris IX — Dauphine (France) and Newcastle (Australia). 
Prof. Grüne is Editor-in-Chief of the journal \textit{Mathematics of Control, Signals and Systems} (MCSS) and Associate Editor of various other journals, including the \textit{Journal of Optimization Theory and Applications} (JOTA), \textit{Mathematical Control and Related Fields} (MCRF) and the \textit{IEEE Control Systems Letters} (CSS-L). 
His research interests lie in the area of mathematical systems and control theory with a focus on numerical and optimization-based methods for nonlinear systems.
\end{IEEEbiography}

\end{document}